\documentclass[11pt,reqno]{amsart}

\usepackage[T1]{fontenc}
\usepackage{lmodern}
\usepackage{microtype}
\usepackage{amsmath,amssymb,amsthm,mathtools}
\usepackage{aliascnt}
\usepackage{bm}
\usepackage{enumitem}
\usepackage{tikz}
\usepackage{float}
\usetikzlibrary{arrows.meta,calc,patterns,positioning}
\usepackage[hidelinks]{hyperref}
\usepackage[nameinlink,capitalise,noabbrev]{cleveref}
\usepackage{cite}

\allowdisplaybreaks
\numberwithin{equation}{section}

\newtheorem{theorem}{Theorem}[section]
\newaliascnt{proposition}{theorem}
\newtheorem{proposition}[proposition]{Proposition}
\aliascntresetthe{proposition}
\newaliascnt{lemma}{theorem}
\newtheorem{lemma}[lemma]{Lemma}
\aliascntresetthe{lemma}
\newaliascnt{corollary}{theorem}
\newtheorem{corollary}[corollary]{Corollary}
\aliascntresetthe{corollary}
\newaliascnt{conjecture}{theorem}

\aliascntresetthe{conjecture}

\theoremstyle{definition}
\newaliascnt{definition}{theorem}
\newtheorem{definition}[definition]{Definition}
\aliascntresetthe{definition}
\newaliascnt{example}{theorem}
\newtheorem{example}[example]{Example}
\aliascntresetthe{example}

\theoremstyle{remark}
\newaliascnt{remark}{theorem}
\newtheorem{remark}[remark]{Remark}
\aliascntresetthe{remark}

\crefname{theorem}{Theorem}{Theorems}
\crefname{proposition}{Proposition}{Propositions}
\crefname{lemma}{Lemma}{Lemmas}
\crefname{corollary}{Corollary}{Corollaries}
\crefname{conjecture}{Conjecture}{Conjectures}
\crefname{definition}{Definition}{Definitions}
\crefname{example}{Example}{Examples}
\crefname{remark}{Remark}{Remarks}

\newcommand{\N}{\mathbb N}
\newcommand{\Nzero}{\mathbb N_0}
\newcommand{\Z}{\mathbb Z}
\newcommand{\R}{\mathbb R}
\newcommand{\C}{\mathbb C}

\newcommand{\Torus}{\mathbb T}
\newcommand{\Log}{\operatorname{Log}}
\newcommand{\Hh}{\mathcal H}
\newcommand{\K}{\mathcal K}
\newcommand{\B}{\mathcal B}

\newcommand{\M}{\mathcal M}
\newcommand{\cN}{\mathcal N}

\newcommand{\Ran}{\operatorname{Ran}}
\newcommand{\Ker}{\operatorname{Ker}}
\newcommand{\closspan}{\overline{\operatorname{span}}}
\newcommand{\supp}{\operatorname{supp}}
\newcommand{\rank}{\operatorname{rank}}

\newcommand{\dd}{\,\mathrm d}
\newcommand{\ip}[2]{\left\langle #1,#2\right\rangle}
\newcommand{\norm}[1]{\left\lVert #1\right\rVert}

\title[Sectorial dynamical frames]
{Sectorial Finitely Generated \\ Dynamical Frames}

\author{Ilya Krishtal}
\address{School of Mathematical and Statistical Sciences, Northern Illinois University, DeKalb, IL 60115, USA}
\email{ikrishtal@niu.edu}
\thanks{Corresponding author: Ilya Krishtal.}

\author{Javad Mashreghi}
\address{D\'epartement de math\'ematiques et de statistique, Universit\'e Laval, Qu\'ebec (Qu\'ebec) G1V 0A6, Canada}
\email{javad.mashreghi@mat.ulaval.ca}

\author{Brendan Miller}
\address{Department of Mathematics, Vanderbilt University, 1326 Stevenson Center, Nashville, TN 37240, USA}
\email{brendan.miller@vanderbilt.edu}

\date{September 6, 2026}

\subjclass[2020]{Primary 42C15, 47A60; Secondary 30H10, 42A65, 47A45}
\keywords{dynamical frame, frame of iterations, vector-valued model space,
compact localization, Calkin algebra, Fredholm index, sectorial spectrum,
nonharmonic Fourier series, generalized divided differences, confluent sampling,
complement property, phase retrieval, functional calculus}

\hypersetup{
  pdftitle={Sectorial Finitely Generated Dynamical Frames},
  pdfauthor={Ilya Krishtal, Javad Mashreghi, Brendan Miller},
  pdfsubject={Compact Fredholm localization, sectorial temporal sampling, and confluent functional calculus for finitely generated dynamical frames},
  pdfkeywords={dynamical frame, compact localization, Calkin algebra, Fredholm index, sectorial spectrum, nonharmonic Fourier series, generalized divided differences, phase retrieval, functional calculus}
}

\begin{document}

\begin{abstract}
We study frames of unilateral iterations generated by finitely many vectors
under an invertible bounded operator $T$.  Suppose that the spectrum of $T$
lies in $\{\rho e^{it}:0<\rho\le1,\ |t|\le c\}$, where $c<\pi$, and that
its essential spectrum lies in $\{e^{it}:|t|\le c_{\mathrm e}\}$, where
$0\le c_{\mathrm e}\le c$.  Real powers are defined through the principal
logarithm.  Our structural result identifies fractional-orbit synthesis,
up to an invertible map and a compact perturbation, with synthesis of
projected vector-valued exponentials on an arc containing the essential
spectrum.  Semi-Fredholm properties and Fredholm index therefore transfer.

For a uniformly separated temporal set $\Lambda$, this reduction gives a
frame when its one-sided lower uniform density satisfies
$D_+^-(\Lambda)>c_{\mathrm e}/\pi$ and its logarithmic block density satisfies
$L(\Lambda)>c/\pi$.  Completeness at the latter threshold requires only
an invertible operator with spectrum in the corresponding angular sector
and a complete integer multiorbit.
At $L(\Lambda)>2c/\pi$, the complement property is equivalent to a finite
condition in channel-cyclic subspaces, with real phase-retrieval
consequences.

Generalized divided differences admit the same compact reduction and
the analogous frame theorem with multiplicities counted.  For arbitrary
one-point-per-cell sampling with cell length $r$, the sufficient raw-sampling
conditions are $rc<\pi$ and $2rc_{\mathrm e}<\pi$. Thus, the full range
$rc<\pi$ holds for raw samples when the essential spectrum is contained
in $\{1\}$; over the unrestricted class, the sharp universal range is
$rc<\pi/2$.  Divided-difference preconditioning restores $rc<\pi$ for
all operators considered and converts exact collisions into
logarithmic-derivative data.
\end{abstract}

\maketitle

\section{Introduction}

Let $T$ be a bounded operator on a complex Hilbert space $\Hh$, and let
$f_1,\ldots,f_m\in\Hh$.  A \emph{frame of iterations}, or dynamical frame,
is a family
\begin{equation}\label{eq:intro-frame}
  \mathcal F(T,\bm f)
  =\{T^nf_j:n\in\mathbb N_0,\ 1\le j\le m\}
\end{equation}
for which there exist $0<a\le b<\infty$ such that
\begin{equation}\label{eq:intro-frame-def}
 a\|x\|^2\le
 \sum_{n=0}^\infty\sum_{j=1}^m|\langle x,T^nf_j\rangle|^2
 \le b\|x\|^2,
 \qquad x\in\mathcal H.
\end{equation}
We call the index $j$ a \emph{channel} and refer to
\eqref{eq:intro-frame} as the integer multiorbit.
Frames of this form arise in dynamical sampling and evolving-data problems;
see \cite{ACMT17,ACKM26}.  Their finite-generator structure is encoded by
vector-valued Hardy-space models \cite{CMS23,ACNP26}.  The purpose of this
paper is to identify an operator-theoretic mechanism that transfers temporal
sampling questions for general, possibly nonnormal, dynamical frames to
nonharmonic Fourier analysis.

Our main contribution is a compact Fredholm localization theorem.  Given the
basic Hardy model $(A,g_1,\ldots,g_m)$ of \eqref{eq:intro-frame}, a temporal
family $\bm s=(s_k)$ with bounded multiplicity of its integer parts, and an arc
$(-d,d)$ containing the essential spectral angles, we construct a closed
localized Hardy range $\mathcal Y_d$ and an invertible operator
$V_d:\mathcal Y_d\to\mathcal H$ such that
\begin{equation}\label{eq:intro-compact-equivalence}
  D_{T,\bm s}=V_dD_{\mathcal E_d(\bm s)}+K_{\bm s},
  \qquad K_{\bm s}\ \text{compact}.
\end{equation}
Here, $D_{T,\bm s}$ is the fractional-orbit synthesis operator and
$D_{\mathcal E_d(\bm s)}$ is the synthesis operator of the projected
vector-valued exponential family in $\mathcal Y_d$.  In particular, upper and
lower semi-Fredholm properties pass between the two systems, and their
Fredholm indices agree.  This compact equivalence, proved in
\cref{thm:fredholm-transfer-general}, is the functional-analytic core of the
paper.  The density and perturbation theorems combine this compact
equivalence with classical sampling and uniqueness results.

Throughout the paper
$\mathbb N_0=\{0,1,2,\ldots\}$, $\mathbb T$ is the unit circle, and $P_Y$ denotes the
orthogonal projection onto a closed subspace $Y$.
Circle and arc $L^2$ spaces use normalized angular measure $dt/(2\pi)$.
For the frame results we assume that $T$ is invertible and, after
multiplication by a unimodular constant, that
\begin{equation}\label{eq:intro-sector}
  \sigma(T)\subset\Sigma_c
  :=\{\rho e^{it}:0<\rho\le1,\ |t|\le c\},
  \qquad 0\le c<\pi.
\end{equation}
The principal logarithm $\Log T$ then defines
$T^z=e^{z\Log T}$ for $z\in\mathbb C$.  We distinguish the full spectral bound
$c$ from an essential spectral bound $c_{\mathrm e}$ satisfying
\begin{equation}\label{eq:essential-sector-general}
  \sigma_{\mathrm e}(T)\subset\{e^{it}:|t|\le c_{\mathrm e}\},
  \qquad 0\le c_{\mathrm e}\le c.
\end{equation}
Here $\sigma_{\mathrm e}$ is the spectrum modulo compact operators,
defined in \cref{sec:model}.  The Hardy model shows that
$c_{\mathrm e}=c$ is always admissible.  If the essential spectrum is
empty, as in finite dimension, we may take $c_{\mathrm e}=0$.
For the completeness and complement-property results, we allow the
larger angular sector
\begin{equation}\label{eq:angular-sector-general}
  \widehat\Sigma_c:=\{\rho e^{it}:\rho>0,\ |t|\le c\},
  \qquad 0\le c<\pi,
\end{equation}
without a frame or power-boundedness assumption.

For a locally finite
$\Lambda\subset[0,\infty)$, its one-sided lower uniform density is
\begin{equation}\label{eq:intro-density-definition}
 D_+^-(\Lambda)
 =\liminf_{R\to\infty}\ \inf_{x\ge0}
   \frac{\#(\Lambda\cap[x,x+R])}{R}.
\end{equation}
We call $\Lambda$ uniformly separated when
$\inf_{\lambda\ne\mu}|\lambda-\mu|>0$.

Our first group of applications establishes completeness under weaker conditions on $T$.  If $T$ is invertible with $\sigma(T)\subset\widehat\Sigma_c$,
the integer multiorbit is complete, and each channel-dependent temporal set
satisfies $L(\Lambda_j)>c/\pi$, then the sampled multiorbit is complete.
Here $L$ is the logarithmic block density defined in
\eqref{eq:log-block-density-general}; separation is unnecessary. We also study the complement property of sampled multiorbits.  For a common set with $L(\Lambda)>2c/\pi$, it is shown to be equivalent to a finite complement condition on the channel-cyclic subspaces.

Under the frame hypotheses, separated sampling has closed synthesis range
of finite codimension whenever $D_+^-(\Lambda)>c_{\mathrm e}/\pi$.
The additional inequality $L(\Lambda)>c/\pi$ gives completeness and
therefore a frame.  In particular, $D_+^-(\Lambda)>c/\pi$ suffices, and
every progression $r\mathbb N_0$ is admissible when $rc<\pi$.
The examples also establish sharpness of the universal completeness and
complement-property constants.

The second group of applications concerns perturbations and collisions.
Bounded perturbations are handled by residue-class extraction;  
\cref{rem:kadec-avdonin-general}
places this result relative to the classical Kadec and Avdonin criteria.  When
temporal nodes form uniformly bounded clusters, generalized divided differences
(GDD) provide the correct confluent data. At an $M$-fold
collision, the data become
\[
  T^\lambda f_j,\quad T^\lambda(\Log T)f_j,\quad\ldots,\quad
  \frac{1}{(M-1)!}T^\lambda(\Log T)^{M-1}f_j.
\] 
We prove a
GDD counterpart of \eqref{eq:intro-compact-equivalence} and obtain a frame
under the same pair of density conditions, with multiplicities counted.
In particular, multiplicity-counted lower uniform density above $c/\pi$
suffices.  
Consequently, arbitrary one-point-per-cell sets are stable after GDD
preconditioning throughout $rc<\pi$.  For raw samples the
conditions are $rc<\pi$ and $2rc_{\mathrm e}<\pi$.  They recover the
universal range $rc<\pi/2$ and give the full range $rc<\pi$ when
$\sigma_{\mathrm e}(T)\subseteq\{1\}$. 

Recent work on operator-orbit frames includes optimal Parseval realizations and
the frame index \cite{ACNP26}, hyperinvariant-subspace methods
\cite{BHKLL25}, non-surjective orbit frames \cite{BW24}, moment systems for
positive exponent sets \cite{GGP26}, Hardy-space multiplication models
\cite{AC26}, and normal-operator constructions \cite{KP26}.  Functional
calculus and perturbation questions appear in \cite{ElIdrissi26,RV26}, while
spectral-preserver and positive-power stability results are developed in
\cite{WuBR26,WuPower26}.  The closest antecedents are the block diagonal
Carleson (BDC) theory \cite{KMBDC26} and the diagonal functional-calculus theory
\cite{KMMFFC26}.  The BDC theory uses interpolation and Jordan-block structure to allow
global natural-density conditions.  Our essential-spectrum refinement
recovers the one-point-per-cell conclusion of \cite[Theorem~1.3]{KMMFFC26}
under the broader assumption $\sigma_{\mathrm e}(T)\subseteq\{1\}$.
The operator need not be normal or diagonalizable, and the model may have
arbitrary finite multiplicity.  For general essential spectra, GDD data
provide an additional way to handle temporal collisions.

The paper is organized as follows.  \Cref{sec:model} develops the Hardy model,
continuous-symbol Calkin calculus, and the compact Fredholm localization
principle.  \Cref{sec:separated} proves the completeness, complement-property, and
separated sampling theorems.  \Cref{sec:perturbations} treats bounded temporal
perturbations and records the Kadec--Avdonin comparison.  \Cref{sec:gdd}
develops confluent localization and GDD sampling.  \Cref{sec:sharpness}
contains the sharpness and exact-collision examples.

\section{The Hardy model and compact Fredholm localization}
\label{sec:model}

\subsection{The basic vector-valued model}

Let $H^2(\C^m)$ denote the Hardy space of analytic
$\C^m$-valued functions
\[
   h(z)=\sum_{n=0}^\infty h_nz^n,
   \qquad \sum_{n=0}^\infty\norm{h_n}_{\C^m}^2<\infty.
\]
Let $S_m$ be multiplication by $z$, and let $e_1,\ldots,e_m$ be the standard
basis of $\C^m$, regarded as constant functions in $H^2(\C^m)$.  We identify
$H^2(\C^m)$ with $\ell^2(\Nzero\times\{1,\ldots,m\})\simeq \ell^2(\Nzero)\otimes\C^m$ through the orthonormal
basis $(z^ne_j)_{n,j}$.

Recall that a family $(x_i)_{i\in I}$ is a frame for $\Hh$ if there are
$0<a\le b<\infty$ such that
\[
   a\norm{x}^2\le\sum_{i\in I}|\ip{x}{x_i}|^2\le b\norm{x}^2,
   \qquad x\in\Hh.
\]
Equivalently, its synthesis operator is bounded and onto \cite{Christensen16}. A frame is called Parseval if the frame bounds satisfy $a=b=1$.

Assume throughout this section that \eqref{eq:intro-frame} is a frame.  Its
synthesis operator, under the aforementioned identification  $H^2(\C^m)\simeq\ell^2(\Nzero\times\{1,\ldots,m\})$, is
\begin{equation}\label{eq:synthesis-general}
   D_{\bm f}:H^2(\C^m)\longrightarrow\Hh,
   \qquad D_{\bm f}(z^ne_j)=T^nf_j.
\end{equation}
It satisfies the intertwining identity
\begin{equation}\label{eq:module-intertwining}
   D_{\bm f}S_m=TD_{\bm f}.
\end{equation}
Set
\begin{equation}\label{eq:model-subspaces}
   \M=\Ker D_{\bm f},
   \qquad \cN=H^2(\C^m)\ominus\M,
\end{equation}
and write $P=P_{\cN}$.

The following proposition describes a basic model of a finitely generated dynamical frame.

\begin{proposition}
\label{prop:basic-model}
The subspace $\M$ is $S_m$-invariant and $\cN$ is $S_m^*$-invariant.  Put
\begin{equation}\label{eq:A-basic}
   A=PS_m|_{\cN},
   \qquad g_j=Pe_j,
   \qquad X=D_{\bm f}|_{\cN}.
\end{equation}
Then $X:\cN\to\Hh$ is boundedly invertible,
\begin{equation}\label{eq:basic-similarity}
   XA=TX,
   \qquad Xg_j=f_j,
\end{equation}
and
\begin{equation}\label{eq:basic-parseval}
   \{A^ng_j:n\in\Nzero,\ 1\le j\le m\}
\end{equation}
is a Parseval frame for $\cN$.  In fact,
\begin{equation}\label{eq:basic-columns}
   A^ng_j=P(z^ne_j).
\end{equation}
\end{proposition}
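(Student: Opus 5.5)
The plan is to read everything off the synthesis operator $D_{\bm f}$ and the intertwining identity \eqref{eq:module-intertwining}; no earlier result beyond the definition of a frame is needed.

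\emph{Invariance.} If $h\in\M$, then \eqref{eq:module-intertwining} gives $D_{\bm f}S_mh=TD_{\bm f}h=0$, so $S_m\M\subset\M$. Taking orthogonal complements yields $S_m^*\cN\subset\cN$.

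\emph{Invertibility of $X$.} Since the family is a frame, $D_{\bm f}$ is bounded and onto, hence $\Ran D_{\bm f}$ is closed and equal to $\Hh$. The restriction $X=D_{\bm f}|_{\cN}$ to $\cN=(\Ker D_{\bm f})^\perp$ is injective. It is also onto, because $D_{\bm f}=D_{\bm f}P$. The open mapping theorem then makes $X$ boundedly invertible.

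\emph{The similarity \eqref{eq:basic-similarity}.} For $h\in\cN$ we have $S_mh-PS_mh\in\M$, so
\[
XAh=D_{\bm f}PS_mh=D_{\bm f}S_mh=TD_{\bm f}h=TXh.
\]
Likewise $Xg_j=D_{\bm f}Pe_j=D_{\bm f}e_j=f_j$.

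\emph{The column identity \eqref{eq:basic-columns}.} I will use that $PS_mP=PS_m$, which follows from $PS_m(I-P)=0$, itself equivalent to $S_m\M\subset\M$. By induction,
\[
A^ng_j=(PS_m)^nPe_j=PS_m^ne_j=P(z^ne_j).
\]
The inductive step is
\[
A\,P(z^ne_j)=PS_mP(z^ne_j)=PS_m z^ne_j=P(z^{n+1}e_j).
\]

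\emph{Parseval property.} By \eqref{eq:basic-columns}, the synthesis operator of \eqref{eq:basic-parseval} sends the orthonormal basis $(z^ne_j)$ to $P(z^ne_j)$, so it equals $P:H^2(\C^m)\to\cN$. Its adjoint is the inclusion $\cN\hookrightarrow H^2(\C^m)$, an isometry. Hence for $x\in\cN$,
\[
\sum_{n,j}|\ip{x}{A^ng_j}|^2=\sum_{n,j}|\ip{x}{z^ne_j}|^2=\norm{x}^2,
\]
since $\ip{x}{P(z^ne_j)}=\ip{x}{z^ne_j}$ when $x\in\cN$. This is the Parseval identity.

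There is no real obstacle. The one point needing care is the compression identity $PS_mP=PS_m$, which is the place where the invariance of $\M$ under $S_m$ is used essentially.
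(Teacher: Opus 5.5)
Your proposal is correct and follows the paper's proof essentially step for step. Both arguments obtain invariance from the intertwining identity, invertibility of $X$ as the restriction of the surjection $D_{\bm f}$ to $(\Ker D_{\bm f})^\perp$, the similarity from $D_{\bm f}$ vanishing on $\M$, the column identity from the compression relation $PS_mP=PS_m$ (the paper writes it as $A^nP=PS_m^nP=PS_m^n$), and the Parseval identity from $\ip{x}{P(z^ne_j)}=\ip{x}{z^ne_j}$ for $x\in\cN$.
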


\begin{proof}
The invariance of $\M$ follows from \eqref{eq:module-intertwining}.  Since
$D_{\bm f}$ is onto, its restriction to $\M^\perp$ is a bounded bijection onto
$\Hh$ and therefore has a bounded inverse.  For $h\in\cN$,
\[
   XAh=D_{\bm f}PS_mh=D_{\bm f}S_mh=TD_{\bm f}h=TXh,
\]
because $D_{\bm f}$ vanishes on $\M$.  The identity $Xg_j=f_j$ is immediate.
Since $\cN$ is $S_m^*$-invariant, powers of the compression satisfy
$A^nP=PS_m^nP$; using the invariance of $\M$ gives
$PS_m^nP=PS_m^n$ and hence \eqref{eq:basic-columns}.  Finally,
\[
 \sum_{n=0}^\infty\sum_{j=1}^m
   |\ip h{A^ng_j}|^2
 =\sum_{n,j}|\ip h{P(z^ne_j)}|^2
 =\sum_{n,j}|\ip h{z^ne_j}|^2
 =\norm h^2,
\]
for every $h\in\cN$.
\end{proof}

This model is the finite-multiplicity version of the scalar compressed-shift
representation; see, for example, \cite{Nikolski02,Sarason94} for scalar model
spaces and \cite{CMS23} for the vector-valued frame model.  We shall prove all temporal sampling statements for the basic
Parseval frame and transfer them through $X$.

We write $\mathcal B(X,Y)$ and $\mathcal K(X,Y)$ for the bounded and
compact operators between Hilbert spaces, omitting $Y$ when $X=Y$.
A bounded operator $B:X\to Y$ is upper semi-Fredholm if its range is
closed and $\dim\Ker B<\infty$, and lower semi-Fredholm if its range is
closed and $\dim(Y/\Ran B)<\infty$.  It is Fredholm if both conditions
hold; its index is
$\operatorname{ind}B=\dim\Ker B-\dim(Y/\Ran B)$.

For $R\in\B(\cN)$, we write $\dot R$ for its coset in the Calkin algebra
$\B(\cN)/\K(\cN)$ and set
$\sigma_{\mathrm e}(R)=\sigma(\dot R)$.  From now on, assume that $T$ is
invertible and satisfies
\eqref{eq:intro-sector}.  The model operator $A$ is similar to $T$, and similarity preserves
both the spectrum and the essential spectrum.  Thus
\begin{equation}\label{eq:A-sector}
   \sigma(A)=\sigma(T)\subset\Sigma_c.
\end{equation}
Let
\[
   L_A=\Log A,
   \qquad A^\alpha=e^{\alpha L_A},\quad \alpha\in\C.
\]
Holomorphic functional calculus and \eqref{eq:basic-similarity} give
\begin{equation}\label{eq:fractional-similarity}
   T^sX=XA^s,
   \qquad s\in\R.
\end{equation}

\subsection{Finite defects and essential unitarity}

The following finite defect lemma is a key to the subsequent analysis. Recall that $\sigma (A) \subseteq \Sigma_c$ for some $c\in [0,\pi)$ as in \eqref{eq:A-sector}.

\begin{lemma}
\label{lem:finite-defect}
The two defect operators of $A$ have finite rank:
\begin{equation}\label{eq:defect-one}
   I-AA^*=P P_{\C^m}|_{\cN},
   \qquad \rank(I-AA^*)\le m,
\end{equation}
and
\begin{equation}\label{eq:defect-two}
   I-A^*A=B^*B,
   \qquad B=P_{\M}S_m|_{\cN},
   \qquad \rank(I-A^*A)\le m.
\end{equation}
Consequently, the Calkin image $\dot A$ is unitary, and  its spectrum satisfies
\begin{equation}\label{eq:essential-spectrum-E}
   E_A: =\sigma(\dot A)=\sigma_{\mathrm e}(A) \subseteq   \{e^{it}:|t|\le c\}.
\end{equation}
\end{lemma}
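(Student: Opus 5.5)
We compute both defect operators directly from the compression structure of \cref{prop:basic-model}. The two facts we use are that $\M$ is $S_m$-invariant and that $\cN$ is $S_m^*$-invariant.

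\emph{The first defect.} Since $\cN$ is $S_m^*$-invariant, for $h\in\cN$ we have
\[
A^*h=PS_m^*h=S_m^*h .
\]
Because $P$ is the identity on $\cN$, this gives
\[
AA^*h=PS_mS_m^*h .
\]
On $H^2(\C^m)$ one has $S_mS_m^*=I-P_{\C^m}$, where $P_{\C^m}$ is the projection onto the constants. Hence
\[
(I-AA^*)h=h-P(h-P_{\C^m}h)=PP_{\C^m}h ,
\]
which is \eqref{eq:defect-one}. The range of $PP_{\C^m}$ is contained in $\operatorname{span}\{g_1,\dots,g_m\}$, so its rank is at most $m$.

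\emph{The second defect.} For $h\in\cN$ we have
\[
A^*Ah=PS_m^*PS_mh .
\]
Write $P=I-P_{\M}$. Then
\[
A^*Ah=PS_m^*S_mh-PS_m^*P_{\M}S_mh=h-PS_m^*P_{\M}S_mh ,
\]
using $S_m^*S_m=I$ and $Ph=h$. The operator $B=P_{\M}S_m|_{\cN}$ maps $\cN\to\M$, and its adjoint $B^*:\M\to\cN$ is $PS_m^*|_{\M}$. This gives $I-A^*A=B^*B$.

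For the rank bound, note that $\rank B^*B=\rank B$. The range of $B$ lies in $P_{\M}S_m\cN$, which is a subspace of $\M\ominus S_m\M$, because
\[
\langle P_{\M}S_mh,S_mk\rangle=\langle S_mh,S_mk\rangle=\langle h,k\rangle=0
\]
for $h\in\cN$ and $k\in\M$. Since $\M$ is $S_m$-invariant, the Beurling--Lax--Halmos theorem identifies its wandering subspace $\M\ominus S_m\M$ with an inner function of multiplicity at most $m$. Concretely, $\dim(\M\ominus S_m\M)\le m$ because the wandering subspace of any $S_m$-invariant subspace has dimension at most $m$. This gives $\rank(I-A^*A)\le m$.

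This is the step needing the most care. If we want to avoid citing Beurling--Lax--Halmos, we can use instead that $\M\ominus S_m\M$ embeds injectively into $H^2(\C^m)\ominus S_mH^2(\C^m)\cong\C^m$. The embedding is $w\mapsto P_{\C^m}w$. To see it is injective, suppose $w\in\M\ominus S_m\M$ and $w=S_mu$. Then $u=S_m^*w$, and $S_m^*$ preserves $\M^\perp$-orthogonality relations only partially, so this direct route is delicate. We would therefore simply cite the standard fact that the wandering subspace of a shift-invariant subspace of $H^2(\C^m)$ has dimension at most $m$.

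\emph{Consequences.} Both defects are finite rank, hence compact. Therefore $\dot A\dot A^*=\dot A^*\dot A=1$ in the Calkin algebra, so $\dot A$ is unitary and its spectrum lies in $\Torus$. Finally, $\sigma(\dot A)\subseteq\sigma(A)\subseteq\Sigma_c$ by \eqref{eq:A-sector}. Intersecting with $\Torus$ gives $\{e^{it}:|t|\le c\}$, which proves \eqref{eq:essential-spectrum-E}.
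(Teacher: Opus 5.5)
Your proof is correct and is essentially the paper's argument: you compute both defects from $A^*=S_m^*|_{\cN}$ and $S_mS_m^*=I-P_{\C^m}$, you obtain $\Ran B\subset\M\ominus S_m\M$ via $\ip{P_{\M}S_mh}{S_mk}=\ip{h}{k}=0$, you cite Beurling--Lax for $\dim(\M\ominus S_m\M)\le m$, and you conclude that $\dot A$ is unitary with spectrum in $\sigma(A)\cap\Torus$, exactly as the paper does.

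Your abandoned digression is unnecessary. As stated it is false for a general $S_m$-invariant $\M$ (take $m=1$ and $\M=zH^2$, where $\M\ominus S_m\M=\C z$ and $w(0)=0$), but in the present setting it closes in one line. If $w=S_mu\in\M\ominus S_m\M$, then $A(Pu)=PS_mu-PS_mP_{\M}u=0$. Invertibility of $A$ then gives $u\in\M$, so $w\in S_m\M$ and hence $w=0$.
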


\begin{proof}
Since $\cN$ is $S_m^*$-invariant by \cref{prop:basic-model}, we have $A^*=S_m^*|_{\cN}$.  As
$S_mS_m^*=I-P_{\C^m}$,
\[
   I-AA^*=P(I-S_mS_m^*)|_{\cN}=P P_{\C^m}|_{\cN},
\]
which has rank at most $m$.  Also,
\[
   I-A^*A
   =S_m^*(I-P)S_m|_{\cN}
   =(P_{\M}S_m|_{\cN})^*(P_{\M}S_m|_{\cN}).
\]
For $x\in\cN$ and $y\in\M$,
\[
   \ip{P_{\M}S_mx}{S_my}
   =\ip{S_mx}{S_my}=\ip{x}{y}=0.
\]
Thus $\Ran B\subset\M\ominus S_m\M$.  By the Beurling--Lax theorem, the wandering
subspace $\M\ominus S_m\M$ has dimension at most $m$.  This proves the rank
bound.  Hence $\dot A$ is both an isometry and a coisometry in the Calkin
algebra.  Its spectrum lies on the unit circle and, by
$E_A\subset\sigma(A)$ together with \eqref{eq:A-sector}, lies in the stated arc.
\end{proof}

\subsection{Continuous symbols}

Let $P_+$ be the Riesz projection on $L^2(\Torus;\C^m)$, and let
$M_q$ denote multiplication by a scalar function $q$ on that space.  For
$q\in C(\Torus)$ define
\begin{equation}\label{eq:Toeplitz-compressions}
   T_q=P_+M_q|_{H^2(\C^m)},
   \qquad A_q=PT_q|_{\cN}.
\end{equation}
The symbol is scalar and acts identically in every channel.

The following lemma lets us define a continuous-symbol Calkin calculus.

\begin{lemma}
\label{lem:continuous-symbol-calculus}
For every $q\in C(\Torus)$,
\begin{equation}\label{eq:Calkin-symbol}
   \dot A_q=q(\dot A).
\end{equation}
Hence $A_q$ is compact whenever $q|_{E_A}=0$.  Moreover,
\begin{equation}\label{eq:intertwining-symbol-compact}
   A_qP-PT_q:H^2(\C^m)\longrightarrow\cN
\end{equation}
is compact.
\end{lemma}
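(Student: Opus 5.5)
The plan is to prove \eqref{eq:Calkin-symbol} first for trigonometric polynomials, then pass to all of $C(\Torus)$ by uniform approximation. The compactness of \eqref{eq:intertwining-symbol-compact} will be derived along the way.

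\textbf{Monomials.} For $q(z)=z^n$ with $n\ge0$ we have $T_q=S_m^n$. By \eqref{eq:basic-columns}, or the identity $A^nP=PS_m^nP$ from the proof of \cref{prop:basic-model}, we get $A_{z^n}=PS_m^n|_{\cN}=A^n$. For $q(z)=\bar z^n$ we have $T_q=S_m^{*n}$. Since $\cN$ is $S_m^*$-invariant, $A_{\bar z^n}=S_m^{*n}|_{\cN}=(A^*)^n$. By \cref{lem:finite-defect}, $\dot A$ is unitary, so $\dot A^{-1}=\dot A^*$ and $(\dot A^*)^n=\dot A^{-n}=\bar z^n(\dot A)$.

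\textbf{Trigonometric polynomials.} Linearity of $q\mapsto A_q$ gives $\dot A_q=q(\dot A)$ for every trigonometric polynomial $q$. Here $q(\dot A)$ is understood through the continuous functional calculus of the unitary $\dot A$ in the $C^*$-algebra $\B(\cN)/\K(\cN)$. For polynomials in $z,\bar z$ on $\Torus$, this calculus agrees with the formal substitution $z\mapsto\dot A$, $\bar z\mapsto\dot A^*$.

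\textbf{Passage to $C(\Torus)$.} Both sides are contractive in $q$: we have $\|A_q\|\le\|T_q\|\le\|q\|_\infty$, and $\|q(\dot A)\|\le\sup_{E_A}|q|\le\|q\|_\infty$. By Fej\'er's theorem, trigonometric polynomials are uniformly dense in $C(\Torus)$, so \eqref{eq:Calkin-symbol} holds for all continuous $q$.

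\textbf{Vanishing on $E_A$.} If $q|_{E_A}=0$, the spectral mapping theorem gives $\sigma(q(\dot A))=q(E_A)=\{0\}$. Since $q(\dot A)$ is normal, $\|q(\dot A)\|=0$. Hence $\dot A_q=0$, that is, $A_q$ is compact.

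\textbf{Compactness of \eqref{eq:intertwining-symbol-compact}.} First take $q=z^n$. Then $A_{z^n}P-PS_m^n=A^nP-PS_m^n=PS_m^nP-PS_m^n=0$, using the $S_m$-invariance of $\M$ as in \cref{prop:basic-model}.

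Next take $q=\bar z^n$. We need $S_m^{*n}P-PS_m^{*n}$. Since $\cN$ is $S_m^*$-invariant, $PS_m^{*n}P=S_m^{*n}P$. Therefore the difference equals $-PS_m^{*n}(I-P)=-PS_m^{*n}P_{\M}$, and we must show this is compact.

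\textbf{Main obstacle.} The key step is showing $PS_m^{*}P_{\M}$ is compact. This operator equals $(P_{\M}S_mP)^*$, and $P_{\M}S_m|_{\cN}=B$ has rank at most $m$ by \cref{lem:finite-defect}. So $PS_m^*P_{\M}=B^*P_{\M}$ has finite rank.

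For higher $n$, write $S_m^{*n}=S_m^*\,S_m^{*(n-1)}$. By induction, commutators with $P$ stay finite rank: if $[S_m^{*(n-1)},P]$ and $[S_m^*,P]$ are finite rank, then so is $[S_m^{*n},P]$. Note that $[S_m^*,P]=S_m^*P-PS_m^*=-PS_m^*P_{\M}$ is finite rank by the computation above.

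\textbf{Conclusion.} The map $q\mapsto A_qP-PT_q$ is linear and bounded by $2\|q\|_\infty$. It takes trigonometric polynomials to finite-rank operators. Since compact operators are norm-closed, density in $C(\Torus)$ gives compactness for all $q\in C(\Torus)$.
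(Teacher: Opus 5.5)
Your proposal is correct and follows the paper's proof essentially step by step. In both, monomials $z^n$ and $\bar z^{\,n}$ give $A^n$ and $A^{*n}$, and linearity plus uniform approximation yields $\dot A_q=q(\dot A)$. The commutator $S_m^*P-PS_m^*=-PS_m^*P_{\M}$ is finite rank: you obtain this from $\rank B\le m$, while the paper factors it through the wandering space $\M\ominus S_m\M$, which is the same underlying fact. Finally, a telescoping (Leibniz) commutator identity handles $\bar z^{\,n}$, and norm density gives compactness for all continuous $q$.
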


\begin{proof}
For $n\ge0$, invariance of $\M$ gives
\(
   A_{z^n}=A^n.
\)
Since $\cN$ is $S_m^*$-invariant,
\(
   A_{\overline z^{\,n}}=A^{*n}.
\)
Thus \eqref{eq:Calkin-symbol} holds for trigonometric polynomials and then for
all continuous $q$ by uniform approximation.  The compactness conclusion follows
from continuous functional calculus for the unitary $\dot A$.

For \eqref{eq:intertwining-symbol-compact}, the difference is zero for
$q=z^n$.  For $q=\overline z$, note that
\begin{equation}\label{eq:commutator-backward}
   A_{\overline z}P-PT_{\overline z}
   =S_m^*P-PS_m^*=-PS_m^*P_{\M}.
\end{equation}
The map on the right vanishes on $S_m\M$, so it factors through the wandering
space $\M\ominus S_m\M$ and has rank at most $m$.  A telescoping commutator
identity gives finite rank for $q=\overline z^{\,n}$.  Uniform approximation
again yields compactness for every continuous symbol.
\end{proof}

Fix an essential spectral bound $c_{\mathrm e}$ as in
\eqref{eq:essential-sector-general}; thus
$E_A=\sigma_{\mathrm e}(T)\subset\{e^{it}:|t|\le c_{\mathrm e}\}$.
Given any $d$ with
\begin{equation}\label{eq:cutoff-choice-general}
   c_{\mathrm e}<d<\pi,
\end{equation}
choose a real-valued $\chi=\chi_d\in C^\infty(\Torus)$ satisfying, in angular
coordinates $t\in(-\pi,\pi)$,
\begin{equation}\label{eq:cutoff-general}
   \chi(e^{it})=1\text{ on a neighborhood of }[-c_{\mathrm e},c_{\mathrm e}],
   \text{ and } \supp\chi\subset\{e^{it}:|t|<d\}.
\end{equation}
For $\alpha\in\C$, set
\begin{equation}\label{eq:q-alpha-general}
   q_\alpha(e^{it})=\chi(e^{it})e^{i\alpha t}.
\end{equation}

In the following lemma, we obtain a compact localization of fractional powers.

\begin{lemma}
\label{lem:compact-localization-general}
For every $\alpha\in\C$, the operator
\begin{equation}\label{eq:C-alpha-general}
   C_\alpha=A^\alpha P-PT_{q_\alpha}:
   H^2(\C^m)\longrightarrow\cN
\end{equation}
is compact, and $\alpha\mapsto C_\alpha$ is an entire compact-operator-valued
function.

Given a countable index set $I$, let $(s_k)_{k\in I}\subset[0,\infty)$ satisfy
\[
   s_k=n_k+\alpha_k,
   \qquad n_k\in\Nzero,
   \qquad 0\le\alpha_k<1,
\]
and suppose that
\begin{equation}\label{eq:integer-multiplicity-general}
   \sup_{n\in\Z}\#\{k:n_k=n\}<\infty.
\end{equation}
Define, for $1\le j\le m$,
\begin{equation}\label{eq:localized-columns-general}
   x_{k,j}=A^{s_k}g_j,
   \qquad
   w_{k,j}=PT_{q_{\alpha_k}}(z^{n_k}e_j).
\end{equation}
The synthesis operators of $(x_{k,j})$ and $(w_{k,j})$ differ by a compact
operator.  For $h\in\cN$,
\begin{equation}\label{eq:localized-coefficient-general}
   \ip h{w_{k,j}}
   =\int_{-d}^d
     \chi(e^{it})\,h_j(e^{it})e^{-is_kt}\,\frac{\dd t}{2\pi}.
\end{equation}
Finitely many exceptional real times change the synthesis comparison only by a
finite-rank operator.
\end{lemma}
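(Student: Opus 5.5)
The plan is to compare both $A^\alpha P$ and $PT_{q_\alpha}$ with the compressed symbol operator $A_{q_\alpha}P$. Write
\[
  C_\alpha=(A^\alpha-A_{q_\alpha})P+\bigl(A_{q_\alpha}P-PT_{q_\alpha}\bigr).
\]
The second term is compact by \eqref{eq:intertwining-symbol-compact} in \cref{lem:continuous-symbol-calculus}. For the first term we pass to the Calkin algebra of $\cN$. The quotient map is a unital homomorphism, so it commutes with holomorphic functional calculus. Since $\sigma(\dot A)\subset\sigma(A)\subset\Sigma_c$, which avoids the negative half-axis, this gives $\dot{(A^\alpha)}=f_\alpha(\dot A)$ with $f_\alpha(\zeta)=e^{\alpha\Log\zeta}$.

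By \cref{lem:finite-defect}, $\dot A$ is unitary with spectrum $E_A$. For a normal element, holomorphic calculus agrees with continuous calculus of the restricted function, so $f_\alpha(\dot A)=(f_\alpha|_{E_A})(\dot A)$. On $E_A\subset\{e^{it}:|t|\le c_{\mathrm e}\}$ we have $f_\alpha(e^{it})=e^{i\alpha t}=q_\alpha(e^{it})$, because $\chi=1$ near that arc. Hence, by \eqref{eq:Calkin-symbol},
\[
  \dot{(A^\alpha)}=q_\alpha(\dot A)=\dot A_{q_\alpha},
\]
and $A^\alpha-A_{q_\alpha}$ is compact. I expect this identification of holomorphic and continuous calculi in the Calkin algebra to be the only delicate point.

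Entireness comes from two facts. First, $\alpha\mapsto A^\alpha=e^{\alpha L_A}$ is norm-entire. Second, $\alpha\mapsto q_\alpha$ is entire into $C(\Torus)$, since $t$ ranges over a bounded set on $\supp\chi$; so $\alpha\mapsto T_{q_\alpha}$ is norm-entire because $\|T_q\|\le\|q\|_\infty$. Hence $C_\alpha$ is entire and takes values in the closed subspace $\K(H^2(\C^m),\cN)$. In particular the Taylor coefficients $K_r$ in $C_\alpha=\sum_{r\ge0}\alpha^rK_r$ are compact, and $\sum_r\|K_r\|<\infty$ for $|\alpha|\le1$.

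For the synthesis comparison, note that $x_{k,j}=A^{n_k}A^{\alpha_k}g_j$. Since $A^{s}$ commutes with $A^{n}$ and $A^nP=PS_m^n$ by \cref{prop:basic-model}, we get
\[
  x_{k,j}=A^{\alpha_k}PS_m^{n_k}e_j,
  \qquad
  x_{k,j}-w_{k,j}=C_{\alpha_k}(z^{n_k}e_j).
\]
Using \eqref{eq:integer-multiplicity-general}, split $I$ into finitely many classes $I_1,\dots,I_N$ on each of which $k\mapsto n_k$ is injective. On $I_l$, let $U_l\delta_{k,j}=z^{n_k}e_j$ (an isometry), and let $D_{r,l}$ be the diagonal contraction with entries $\alpha_k^r$. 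Then the difference of the synthesis operators restricted to $I_l$ equals $\sum_r K_rU_lD_{r,l}$. This is a norm-convergent series of compact operators, hence compact, and summing over the finitely many classes gives compactness.

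For \eqref{eq:localized-coefficient-general}, use $h=Ph$ and compute in $L^2(\Torus;\C^m)$:
\[
  \ip{h}{PT_q(z^ne_j)}=\ip{h}{P_+(qz^ne_j)}=\ip{h}{qz^ne_j}.
\]
With $\overline{q_{\alpha_k}}=\chi e^{-i\alpha_kt}$ ($\chi$ real, $\alpha_k$ real) and the support of $\chi$ in $(-d,d)$, this is exactly the stated integral with $e^{-i(n_k+\alpha_k)t}=e^{-is_kt}$. Finally, finitely many exceptional real times only add or alter finitely many columns, and each column contributes a rank-one operator to either synthesis map; so the comparison changes by a finite-rank operator.
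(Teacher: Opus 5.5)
Your proposal is correct and follows the paper's proof essentially step for step: the same splitting of $C_\alpha$ through $A_{q_\alpha}$ using the Calkin symbol calculus, the same entireness argument, and the same compact-valued Taylor expansion paired with diagonal multipliers and the coordinate map $\delta_{k,j}\mapsto z^{n_k}e_j$. The differences are cosmetic: you expand at $0$ with contraction diagonals and split $I$ into injective classes, where the paper expands at $1/2$ and uses $\|J\|\le\sqrt M$. You also spell out two points the paper leaves implicit, namely that holomorphic and continuous calculi agree in the Calkin algebra and that finitely many exceptional times contribute only a finite-rank change.
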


\begin{proof}
On the essential spectrum $E_A$, the principal power $z^\alpha$ is continuous and
$q_\alpha(z)=z^\alpha$.  By \cref{lem:continuous-symbol-calculus},
\[
   A^\alpha-A_{q_\alpha}\in\K(\cN),
\]
where the power is defined by holomorphic functional calculus.  Combining
this with \eqref{eq:intertwining-symbol-compact} proves compactness of
$C_\alpha$.  The map $\alpha\mapsto A^\alpha P$ is entire because
$A^\alpha=e^{\alpha\Log A}$.  The map
$\alpha\mapsto q_\alpha$ is entire with values in $C(\Torus)$: its
$\ell$th derivative is $(it)^\ell\chi(e^{it})e^{i\alpha t}$ in angular
coordinates.  Hence $\alpha\mapsto PT_{q_\alpha}$, and therefore
$\alpha\mapsto C_\alpha$, is entire in operator norm.  Since every
$C_\alpha$ is compact and the compact operators form a norm-closed subspace,
this is an entire $\K(H^2(\C^m),\cN)$-valued map.

Consider the Taylor expansion of $C_\alpha$ about $1/2$:
\[
   C_\alpha=\sum_{\ell=0}^\infty C_\ell(\alpha-1/2)^\ell.
\]
The coefficients are compact by the Banach-valued Cauchy formula.  On a circle
$|\alpha-1/2|=R$ with $R>1/2$, Cauchy's estimate gives
\[
   \norm{C_\ell}\le M_RR^{-\ell},
   \qquad M_R=\max_{|\alpha-1/2|=R}\norm{C_\alpha},
\]
and consequently
\begin{equation}\label{eq:Taylor-summability-general}
   \sum_{\ell=0}^\infty2^{-\ell}\norm{C_\ell}<\infty.
\end{equation}
Let $J:\ell^2(I\times\{1,\ldots,m\})\to H^2(\C^m)$ map the coordinate
$(k,j)$ to $z^{n_k}e_j$.  If the multiplicity in
\eqref{eq:integer-multiplicity-general} is at most $M$, then
$\norm J\le\sqrt M$.  If $M_\ell$ is diagonal with entries
$(\alpha_k-1/2)^\ell$, then $\norm{M_\ell}\le2^{-\ell}$ and the synthesis
difference equals
\[
   \sum_{\ell=0}^\infty C_\ell JM_\ell.
\]
Each summand is compact, while
\[
 \sum_{\ell=0}^\infty
 \norm{C_\ell JM_\ell}
 \le \norm J\sum_{\ell=0}^\infty
        2^{-\ell}\norm{C_\ell}<\infty.
\]
Thus, the series converges in operator norm to a compact operator.
Finally, since $h\in\cN\subset H^2(\C^m)$ and $P^*h=h$, the Hardy-space
inner product with $PT_{q_{\alpha_k}}(z^{n_k}e_j)$ is exactly the integral in
\eqref{eq:localized-coefficient-general}.
\end{proof}

The following lemma establishes compactness of the restriction of a multiplication operator away from the essential spectrum.

\begin{lemma}
\label{lem:compact-restriction-general}
If $\varphi\in C(\Torus)$ vanishes on a neighborhood of $E_A$, then
\begin{equation}\label{eq:R-phi-general}
   R_\varphi:\cN\longrightarrow L^2(\Torus;\C^m),
   \qquad R_\varphi h=\varphi h,
\end{equation}
is compact.
\end{lemma}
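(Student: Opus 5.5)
Since $h\in\cN\subset H^2(\C^m)$ and $\varphi$ is a scalar symbol, I will reduce compactness of $R_\varphi$ to compactness of $R_\varphi^*R_\varphi$, that is, of the operator $h\mapsto P(|\varphi|^2h)$ restricted to $\cN$. More precisely, for $h\in\cN$,
\[
   \norm{R_\varphi h}^2=\ip{M_{|\varphi|^2}h}{h}_{L^2}
   =\ip{PT_{|\varphi|^2}h}{h}=\ip{A_{|\varphi|^2}h}{h},
\]
because $h=Ph$ and $h=P_+h$. Hence $R_\varphi^*R_\varphi=A_{|\varphi|^2}$ as operators on $\cN$. A bounded operator $R$ is compact exactly when $R^*R$ is compact. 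So it suffices to show that $A_{|\varphi|^2}$ is compact.

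The function $q=|\varphi|^2$ is continuous on $\Torus$ and vanishes on a neighborhood of $E_A$, in particular on $E_A$. By \cref{lem:continuous-symbol-calculus}, $\dot A_q=q(\dot A)$, and $q(\dot A)=0$ by the continuous functional calculus for the unitary $\dot A$ (\cref{lem:finite-defect}), since $q$ vanishes on $\sigma(\dot A)=E_A$. Thus $A_q\in\K(\cN)$, and therefore $R_\varphi$ is compact.

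The only point needing care is the identity $R_\varphi^*R_\varphi=A_{|\varphi|^2}$, which rests on the adjoint computation above: multiplication by $|\varphi|^2$ followed by compression with $P_+$ and $P$ gives the same quadratic form on $\cN$ because the vector $h$ already lies in $\cN$. The lemma only needs $q|_{E_A}=0$; the neighborhood hypothesis is stronger than necessary here, presumably for later use.
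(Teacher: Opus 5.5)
Your proposal is correct and follows essentially the same route as the paper: identify $R_\varphi^*R_\varphi$ with $A_{|\varphi|^2}$, invoke \cref{lem:continuous-symbol-calculus} to get compactness because $|\varphi|^2$ vanishes on $E_A$, and conclude that $R_\varphi$ is compact. Your remark is also right: vanishing on $E_A$ itself already suffices for this argument, so the neighborhood hypothesis is more than is needed here.
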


\begin{proof}
The positive operator $R_\varphi^*R_\varphi$ equals
$A_{|\varphi|^2}$.  Its symbol vanishes on $E_A$, so
\cref{lem:continuous-symbol-calculus} makes it compact.  Hence
$R_\varphi$ is compact.
\end{proof}

The following form of Peetre's lemma converts an estimate with a compact
remainder into a closed-range statement; see \cite[Lemma~3]{Peetre61}.

\begin{lemma}
\label{lem:peetre-general}
Let $X,Y,Z$ be Hilbert spaces, let $B\in\mathcal B(X,Y)$, and let
$K\in\mathcal K(X,Z)$.  If
\begin{equation}\label{eq:peetre-general}
   \norm x_X\le C\norm{Bx}_Y+\norm{Kx}_Z,
   \qquad x\in X,
\end{equation}
then $\Ker B$ is finite-dimensional and $\Ran B$ is closed.  If, in addition,
$\Ker B=\{0\}$, then $B$ is bounded below.
\end{lemma}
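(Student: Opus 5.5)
The strategy is the classical Peetre argument: show first that the unit ball of $\Ker B$ is compact, and then show that $B$ is bounded below on a complement of the kernel. Both parts use only the estimate \eqref{eq:peetre-general} and the fact that compact operators turn weakly convergent sequences into norm convergent ones.

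\emph{Finite-dimensional kernel.} On $\Ker B$ the estimate reduces to $\norm x\le\norm{Kx}$. Take a bounded sequence $(x_n)$ in $\Ker B$. Since $X$ is a Hilbert space, we can pass to a weakly convergent subsequence. Because $K$ is compact, $(Kx_n)$ then converges in norm, and applying the estimate to differences gives $\norm{x_n-x_p}\le\norm{K(x_n-x_p)}$. So the subsequence is Cauchy in norm. Hence the closed unit ball of $\Ker B$ is compact, and by Riesz's theorem $\dim\Ker B<\infty$.

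\emph{Bounded below off the kernel.} Let $X_0=(\Ker B)^\perp$. I will show that there is $\gamma>0$ with $\norm{Bx}\ge\gamma\norm x$ for all $x\in X_0$. Suppose not. Then there are unit vectors $x_n\in X_0$ with $Bx_n\to0$. After passing to a subsequence, $x_n\rightharpoonup x$ weakly for some $x\in X_0$, and $Kx_n\to Kx$ in norm. Apply the estimate to $x_n-x_p$:
\[
\norm{x_n-x_p}\le C\norm{Bx_n-Bx_p}+\norm{K(x_n-x_p)}\longrightarrow0 .
\]
So $(x_n)$ is norm Cauchy and converges in norm to $x$, with $\norm x=1$ and $x\in X_0$. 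But $Bx=\lim Bx_n=0$, so $x\in\Ker B\cap X_0=\{0\}$, which contradicts $\norm x=1$. It follows that $B|_{X_0}$ is bounded below, so $\Ran B=B(X_0)$ is closed.

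\emph{Trivial kernel.} If $\Ker B=\{0\}$, then $X_0=X$, and the previous step already says that $B$ is bounded below on all of $X$.

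The only delicate point is the compactness extraction in the second step. The estimate gives norm convergence of a weakly convergent sequence, and this is what forces the limit to be a unit vector in the kernel. Everything else is routine.
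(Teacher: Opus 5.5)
Your proof is correct and takes essentially the same route as the paper. In both, the kernel is finite-dimensional because $\norm{x}\le\norm{Kx}$ makes its unit ball compact, and closed range follows from a contradiction argument in which the estimate forces unit vectors in $(\Ker B)^\perp$ with $Bx_n\to0$ to be norm Cauchy, with a unit-norm limit in the kernel. The only cosmetic difference is that you prove the lower bound on $(\Ker B)^\perp$ directly and deduce closed range, while the paper assumes non-closed range and derives the contradiction; the two are equivalent.
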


\begin{proof}
On $\Ker B$, inequality \eqref{eq:peetre-general} says
$\norm x\le\norm{Kx}$.  The unit ball of $\Ker B$ is therefore relatively
compact, so the kernel is finite-dimensional.  If the range were not closed,
there would be unit vectors $x_n\perp\Ker B$ with $Bx_n\to0$.  Passing to a
subsequence, compactness gives $Kx_n\to z$.  Applying
\eqref{eq:peetre-general} to $x_n-x_m$ shows that $(x_n)$ is Cauchy.  Its limit
$x$ has norm one, is orthogonal to $\Ker B$, and satisfies $Bx=0$, a
contradiction.  Thus the range is closed.  When the kernel is trivial, the same
closed-range conclusion is equivalent to a lower bound for $B$.
\end{proof}

\begin{theorem}
\label{thm:fredholm-transfer-general}
Assume the setting of \cref{prop:basic-model,lem:compact-localization-general},
fix $d\in(c_{\mathrm e},\pi)$ and the cutoff $\chi=\chi_d$, and let
$\bm s=(s_k)_{k\in I}$ satisfy the bounded integer-part multiplicity condition
\eqref{eq:integer-multiplicity-general}.  Define
\[
  R_d:\mathcal N\longrightarrow L^2(-d,d;\mathbb C^m),
  \qquad R_dh=\chi h,
  \qquad \mathcal Y_d=R_d\mathcal N.
\]
Then $\mathcal Y_d$ is closed and
$R_d:\mathcal N\to\mathcal Y_d$ is boundedly invertible.

Let $D_{T,\bm s}$ denote the synthesis operator of
$\{T^{s_k}f_j\}_{k,j}$, and let $D_{\mathcal E_d(\bm s)}$ denote the synthesis
operator of the projected exponential family
\begin{equation}\label{eq:projected-exponential-family-general}
  \mathcal E_d(\bm s)
  =\left\{P_{\mathcal Y_d}(e^{is_kt}e_j):k\in I,\ 1\le j\le m\right\}
  \subset\mathcal Y_d.
\end{equation}
There exist a boundedly invertible operator
\[
  V_d=XR_d^*:\mathcal Y_d\longrightarrow\mathcal H
\]
and a compact operator
$K_{\bm s}:\ell^2(I\times\{1,\ldots,m\})\to\mathcal H$ such that
\begin{equation}\label{eq:fredholm-transfer-general}
  D_{T,\bm s}=V_dD_{\mathcal E_d(\bm s)}+K_{\bm s}.
\end{equation}
Consequently, $D_{T,\bm s}$ is upper semi-Fredholm, lower semi-Fredholm, or
Fredholm if and only if $D_{\mathcal E_d(\bm s)}$ has the corresponding
property.  In the Fredholm case,
\begin{equation}\label{eq:fredholm-index-general}
  \operatorname{ind}D_{T,\bm s}
  =\operatorname{ind}D_{\mathcal E_d(\bm s)}.
\end{equation}
If the projected exponential family is a frame for $\mathcal Y_d$ and the
dynamical family is complete in $\mathcal H$, then the dynamical family is a
frame.
\end{theorem}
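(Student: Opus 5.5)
The proof has three stages. First we show that $R_d$ is bounded below on $\cN$. Then we identify the synthesis operators through $R_d^*$ and invoke \cref{lem:compact-localization-general}. Finally we apply standard compact-perturbation facts for semi-Fredholm operators.

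\emph{Step 1: $R_d$ is an isomorphism onto $\mathcal Y_d$.} Boundedness is clear since $|\chi|\le 1$. Because $\chi=1$ near $[-c_{\mathrm e},c_{\mathrm e}]$, the function $\varphi=1-\chi$ is continuous and vanishes on a neighborhood of $E_A$. By \cref{lem:compact-restriction-general}, the map $R_\varphi:h\mapsto(1-\chi)h$ is compact. For $h\in\cN$ we have
\[
\norm h\le\norm{\chi h}+\norm{(1-\chi)h}=\norm{R_dh}+\norm{R_\varphi h},
\]
where the norm of $\chi h$ on the arc equals its norm on $\Torus$ because $\supp\chi\subset(-d,d)$. \Cref{lem:peetre-general} then gives that $\Ker R_d$ is finite-dimensional and $\Ran R_d$ is closed. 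Injectivity is the main obstacle. If $\chi h=0$, then $h$ vanishes on an arc of positive measure. Since $h\in H^2(\C^m)$, each component $h_j$ is a scalar $H^2$ function, and a nonzero $H^2$ function cannot vanish on a set of positive measure (F.~and M.~Riesz / $\log|h|\in L^1$). Hence $h=0$. By the last clause of \cref{lem:peetre-general}, $R_d$ is bounded below, so $\mathcal Y_d$ is closed and $R_d:\cN\to\mathcal Y_d$ is boundedly invertible. Consequently $R_d^*:\mathcal Y_d\to\cN$ is invertible, and $V_d=XR_d^*$ is invertible by \cref{prop:basic-model}.

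\emph{Step 2: the identity \eqref{eq:fredholm-transfer-general}.} By \eqref{eq:fractional-similarity} and $Xg_j=f_j$, we have $T^{s_k}f_j=XA^{s_k}g_j=Xx_{k,j}$, so $D_{T,\bm s}=XD_x$. By \cref{lem:compact-localization-general}, $D_x=D_w+K'$ with $K'$ compact. It remains to show that $w_{k,j}=R_d^*P_{\mathcal Y_d}(e^{is_kt}e_j)$.

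Take $h\in\cN$. By \eqref{eq:localized-coefficient-general},
\[
\ip h{w_{k,j}}=\ip{\chi h}{e^{is_kt}e_j}_{L^2(-d,d)}=\ip{R_dh}{P_{\mathcal Y_d}(e^{is_kt}e_j)},
\]
where the last equality uses $R_dh\in\mathcal Y_d$. This inner product equals $\ip h{R_d^*P_{\mathcal Y_d}(e^{is_kt}e_j)}$. Here $R_d^*$ is taken as the adjoint of $R_d:\cN\to\mathcal Y_d$, and one must keep track of the $dt/2\pi$ normalization consistently.

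It follows that $D_w=R_d^*D_{\mathcal E_d(\bm s)}$, so $D_{T,\bm s}=V_dD_{\mathcal E_d(\bm s)}+XK'$, and we set $K_{\bm s}=XK'$. Boundedness of $D_{\mathcal E_d(\bm s)}$ comes along with this identity, since $D_w$ is bounded by the bound $\norm J\le\sqrt M$.

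\emph{Step 3: consequences.} Composing with the invertible $V_d$ preserves all semi-Fredholm properties and the index. Upper semi-Fredholm, lower semi-Fredholm and Fredholm operators are stable under compact perturbation, with the index unchanged (Atkinson / Calkin-algebra invertibility arguments). Because $K_{\bm s}$ is compact and $V_d$ invertible, the implication runs in both directions, which gives the equivalences and \eqref{eq:fredholm-index-general}.

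For the final claim, a frame $\mathcal E_d(\bm s)$ has a surjective synthesis operator, so $D_{\mathcal E_d(\bm s)}$ is lower semi-Fredholm. Hence $D_{T,\bm s}$ has closed range. Completeness means the range is dense, so the range is all of $\mathcal H$, and a bounded surjective synthesis operator means the dynamical family is a frame.
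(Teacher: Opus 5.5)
Your proposal is correct and follows the paper's proof essentially step for step. The shared steps are: the Peetre estimate with the compact map $h\mapsto(1-\chi)h$ together with F.~Riesz uniqueness for $R_d$; the identity $R_d^*P_{\mathcal Y_d}(e^{is_kt}e_j)=w_{k,j}$, obtained by testing against $h\in\cN$ via \eqref{eq:localized-coefficient-general}; and invariance of semi-Fredholmness and index under invertible left factors and compact perturbations.

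The one place you diverge is boundedness of the synthesis operators. The paper cites the classical upper Bessel estimate for relatively separated exponentials on $L^2(-d,d;\C^m)$. You instead deduce boundedness of $D_{\mathcal E_d(\bm s)}=(R_d^*)^{-1}D_w$ and of $D_x=D_w+K'$ from boundedness of $D_w$. This route is valid and self-contained within the model, but $\norm J\le\sqrt M$ alone does not bound $D_w$, because the symbol $q_{\alpha_k}$ changes with $k$. You should expand the entire map $\alpha\mapsto PT_{q_\alpha}=\sum_\ell B_\ell(\alpha-1/2)^\ell$ exactly as in the proof of \cref{lem:compact-localization-general}. This gives $D_w=\sum_\ell B_\ell JM_\ell$ with $\sum_\ell 2^{-\ell}\norm{B_\ell}<\infty$.
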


\begin{proof}
Since $1-\chi$ vanishes on a neighborhood of $E_A$,
\cref{lem:compact-restriction-general} makes
$h\mapsto(1-\chi)h$ compact on $\mathcal N$.  By the triangle inequality
\[
  \|h\|\le \|R_dh\|+\|(1-\chi)h\|,
  \qquad h\in\mathcal N,
\]
and \cref{lem:peetre-general} applies to ensure that $R_d$ has closed range and finite-dimensional
kernel.  If $R_dh=0$, then every scalar component of $h$ has boundary values
zero on an open arc on which $\chi=1$.  F.~Riesz's Hardy-space uniqueness theorem gives $h=0$ \cite{Mash09}.
Thus $R_d$ is bounded below, $\mathcal Y_d$ is closed, and
$R_d:\mathcal N\to\mathcal Y_d$ is boundedly invertible.  Therefore
$R_d^*:\mathcal Y_d\to\mathcal N$ and $V_d=XR_d^*$ are boundedly invertible.

The bounded integer-part multiplicity assumption implies
\[
   \sup_{x\in\mathbb R}
   \#\{k:s_k\in[x,x+1]\}<\infty.
\]
Hence $\bm s$ is relatively separated, and the standard upper
nonharmonic Fourier estimate shows that
$\{e^{is_kt}e_j\}_{k,j}$ is Bessel on $L^2(-d,d;\mathbb C^m)$.
Its orthogonal projection onto $\mathcal Y_d$ is therefore Bessel, so
$D_{\mathcal E_d(\bm s)}$ is bounded.  The compact comparison from
Lemma~\ref{lem:compact-localization-general} then also makes
$D_{T,\bm s}$ bounded.

Write $s_k=n_k+\alpha_k$ as in
\cref{lem:compact-localization-general}.  For
$u_{k,j}=P_{\mathcal Y_d}(e^{is_kt}e_j)$, one has
\[
  R_d^*u_{k,j}
  =PT_{q_{\alpha_k}}(z^{n_k}e_j)=w_{k,j}.
\]
Indeed, this identity follows by testing against $h\in\mathcal N$ and using
\eqref{eq:localized-coefficient-general}.  Thus the synthesis operator of the
localized columns $(w_{k,j})$ is $R_d^*D_{\mathcal E_d(\bm s)}$.
By \cref{lem:compact-localization-general}, the synthesis operator of
$(A^{s_k}g_j)$ differs from this operator by a compact map.  Multiplication by
the similarity $X$ and the identity $T^{s_k}f_j=XA^{s_k}g_j$ give
\eqref{eq:fredholm-transfer-general}.

Semi-Fredholmness and Fredholmness are invariant under invertible left
multiplication and compact perturbations, and the Fredholm index is unchanged.
Finally, if $D_{\mathcal E_d(\bm s)}$ is onto, then
\eqref{eq:fredholm-transfer-general} makes $D_{T,\bm s}$ lower semi-Fredholm.
Its range is therefore closed and of finite codimension.  Completeness makes
that range dense, hence equal to $\mathcal H$.
\end{proof}

\begin{remark}
\label{rem:beyond-sectorial-localization}
The compact-transfer mechanism does not itself require a spectral sector.  Let
$E_A=\sigma_{\mathrm e}(A)\subset\mathbb T$, let $\varphi$ be holomorphic near
$\sigma(A)$, and choose $q\in C(\mathbb T)$ with
$q|_{E_A}=\varphi|_{E_A}$.  Then
\begin{equation}\label{eq:general-holomorphic-localization}
   \varphi(A)P-PT_q
   \in \mathcal K\bigl(H^2(\mathbb C^m),\mathcal N\bigr).
\end{equation}
Indeed, $\varphi(\dot A)=q(\dot A)$, and the remaining Toeplitz-compression
error is compact by \cref{lem:continuous-symbol-calculus}.  The same argument,
together with the compact-valued Taylor expansion used above, applies to
norm-holomorphic symbol families and therefore transfers semi-Fredholm
properties and Fredholm index in that setting as well.

Sectoriality is used to choose one logarithmic branch and turn the boundary
symbols into ordinary exponentials on a single arc.  A finite union of proper
arcs leads instead to a multiband exponential model.  If $E_A=\mathbb T$, the
global Toeplitz transfer \eqref{eq:general-holomorphic-localization} remains
valid, but the coordinate function has no continuous logarithm on the circle;
a coherent model for all noninteger powers then requires additional, generally
operator-valued, essential-symbol structure.
\end{remark}

\begin{remark}
\label{rem:frame-sequence-transfer-general}
A Bessel family is a frame sequence of finite excess precisely when its
synthesis operator is upper semi-Fredholm.  Hence
\cref{thm:fredholm-transfer-general} gives an equivalence of this property for
the dynamical and projected-exponential systems.  If both systems are complete
and one is a frame of finite excess, then both synthesis operators are
Fredholm; their excesses agree by \eqref{eq:fredholm-index-general}.
The projection onto $\mathcal Y_d$ is essential here: compact perturbations do
not preserve an arbitrary frame-sequence lower bound without a semi-Fredholm
hypothesis.
\end{remark}
\section{Completeness, complement properties, and separated sampling}
\label{sec:separated}

For a locally finite set $\Lambda\subset[0,\infty)$, recall its one-sided density $D_+^-(\Lambda)$ from
\eqref{eq:intro-density-definition} and the accompanying notion of uniform
separation.  We also write
\begin{equation}\label{eq:lower-natural-density-general}
  \underline d_{\rm nat}(\Lambda)
  =\liminf_{R\to\infty}
    \frac{\#(\Lambda\cap[0,R])}{R}
\end{equation}
for its lower natural density.

For a locally finite set $\Lambda\subset\R$, we will also use the two-sided lower Beurling density
\begin{equation}\label{eq:two-sided-density-general}
 D^-(\Lambda)=\liminf_{R\to\infty}\ \inf_{x\in\R}
 \frac{\#(\Lambda\cap[x,x+R])}{R}
\end{equation}
and Rubel's logarithmic block density
\begin{equation}\label{eq:log-block-density-general}
   L(\Lambda)=\inf_{\rho>1}\frac1{\log\rho}
   \limsup_{x\to\infty}
   \sum_{\lambda\in\Lambda\cap[x,\rho x]}\frac1\lambda.
\end{equation}
A finite number of points near zero are omitted from the sum and do not affect the value.
\subsection{Density comparisons and sectorial uniqueness}

To use the classical Beurling-density sampling theory, we augment the sets $\Lambda\subset[0,\infty)$ with negative integers.

\begin{lemma}
\label{lem:augmented-density-general}
For a uniformly separated set $\Lambda\subset[0,\infty)$, put
\[
   \widetilde\Lambda=\Lambda\cup\{-1,-2,-3,\ldots\}.
\]
Then $\widetilde\Lambda$ is uniformly separated and
\begin{equation}\label{eq:augmented-density-general}
   D^-(\widetilde\Lambda)=\min\{1,D_+^-(\Lambda)\}.
\end{equation}
\end{lemma}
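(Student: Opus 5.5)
The plan is to verify separation directly and then prove the density identity by two inequalities, estimating window counts for windows lying in $[0,\infty)$, in $(-\infty,0)$, or straddling $0$.

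\emph{Separation.} Let $\delta=\inf_{\lambda\ne\mu\in\Lambda}|\lambda-\mu|>0$. Pairs of negative integers are at distance at least $1$. A negative integer and a point of $\Lambda\subset[0,\infty)$ are at distance at least $1$. Hence the separation constant of $\widetilde\Lambda$ is at least $\min\{\delta,1\}$.

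\emph{Upper bound} $D^-(\widetilde\Lambda)\le\min\{1,D_+^-(\Lambda)\}$. Since $D^-$ is an infimum over all windows, we may restrict to special families. Windows $[x,x+R]$ with $x\le -R-1$ contain exactly the integers in the window, at most $R+1$ of them. This gives $D^-(\widetilde\Lambda)\le\liminf(R+1)/R=1$. Windows with $x\ge0$ contain only points of $\Lambda$. Taking the infimum over $x\ge0$ and then $\liminf_{R\to\infty}$ gives $D^-(\widetilde\Lambda)\le D_+^-(\Lambda)$.

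\emph{Lower bound.} Fix $\varepsilon>0$ and set $\beta=\min\{1,D_+^-(\Lambda)\}-\varepsilon$. The target is: for all large $R$ and all $x\in\R$, the window $[x,x+R]$ contains at least $(\beta-\varepsilon')R$ points. Let $N(x,R)=\#(\Lambda\cap[x,x+R])$.

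By definition of the $\liminf$, there is $R_0$ with $\inf_{y\ge0}N(y,R')\ge(D_+^-(\Lambda)-\varepsilon)R'$ for all $R'\ge R_0$. Now split cases on $x$.

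\begin{itemize}
\item If $x\ge0$, the window count is at least $(D_+^-(\Lambda)-\varepsilon)R\ge\beta R$.
\item If $x+R\le0$, the window contains at least $R-1$ negative integers, which is fine for large $R$.
\item If $x<0<x+R$, write $a=-x$ and $b=x+R$. The negative part $[x,0)$ contains at least $a-1$ negative integers. For the positive part $[0,b]$: if $b\ge R_0$ it contains at least $(D_+^-(\Lambda)-\varepsilon)b$ points, and otherwise it contributes at least $0\ge(D_+^-(\Lambda)-\varepsilon)b-C R_0$ with $C=D_+^-(\Lambda)$.
\end{itemize}

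In the straddling case the total is therefore at least
\[
a-1+\bigl(D_+^-(\Lambda)-\varepsilon\bigr)b-C R_0\ \ge\ \beta(a+b)-1-C R_0=\beta R-O(1),
\]
using that $\beta$ is at most each of the two coefficients. Dividing by $R$ and letting $R\to\infty$, then $\varepsilon\to0$, gives $D^-(\widetilde\Lambda)\ge\min\{1,D_+^-(\Lambda)\}$. Note that $D_+^-(\Lambda)<\infty$, because uniform separation bounds counts by $R/\delta+1$.

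The main obstacle is the straddling case. The key idea there is the convex-combination bound $a\cdot 1+b\cdot D\ge\min\{1,D\}(a+b)$, together with the uniform treatment of short positive pieces via the additive constant $CR_0$.
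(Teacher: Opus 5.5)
Your proof is correct and takes essentially the same approach as the paper. The upper bound uses windows far in the negative half-line together with windows in $[0,\infty)$. The lower bound splits a window at the origin: the negative integers give roughly the length of the left piece, and a long right piece gets the $R_0$-uniform bound $(D_+^-(\Lambda)-\varepsilon)$ times its length, while a short one is discarded at cost $O(R_0)$. The two pieces are then combined through the minimum of the two coefficients. Your three-case split is the paper's single split with one of the two pieces possibly of length zero.
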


\begin{proof}
Write $D=D_+^-(\Lambda)$.  The upper bound in
\eqref{eq:augmented-density-general} follows in two ways: intervals contained
far in the negative half-line have asymptotic density one, while intervals in
$[0,\infty)$ realizing the infimum in the definition of $D_+^-$ show that the
two-sided lower density is at most $D$.

For the reverse inequality, fix $\varepsilon>0$.  There is $R_0$ such that every
interval $[y,y+H]\subset[0,\infty)$ with $H\ge R_0$ contains at least
$(D-\varepsilon)H$ points of $\Lambda$.  Let $I=[x,x+R]$ be arbitrary and
split it at the origin into a negative part of length $R_-$ and a nonnegative
part of length $R_+$, so $R_-+R_+=R$.  The negative integers contribute at
least $R_--2$ points.  If $R_+\ge R_0$, the positive part contributes at least
$(D-\varepsilon)R_+$ points; if $R_+<R_0$, we simply discard that contribution.
In either case,
\[
 \#(\widetilde\Lambda\cap I)
 \ge \min\{1,D-\varepsilon\}R-(R_0+2).
\]
Taking the infimum in $x$, then the lower limit as $R\to\infty$, and finally
letting $\varepsilon\downarrow0$ proves the lower bound.  The augmented set is
uniformly separated because the two pieces are separated from one another by the
gap between $-1$ and $[0,\infty)$.
\end{proof}

The next lemma shows that logarithmic block density dominates both natural
and lower uniform density.  The proof also applies verbatim to multisets when
multiplicities are counted. The lemma follows from the results in \cite{Grekos05, Kuzhaev19, MalliavinRubel61}; we record the proof for completeness.

\begin{lemma}
\label{lem:uniform-log-general}
For every locally finite $\Lambda\subset[0,\infty)$,
\begin{equation}\label{eq:uniform-log-ineq-general}
   L(\Lambda)\ge \underline d_{\rm nat}(\Lambda)
   \ge D_+^-(\Lambda).
\end{equation}
The same inequalities hold for a locally finite multiset when all counting
functions and sums count multiplicity.
\end{lemma}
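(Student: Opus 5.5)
We prove the two inequalities separately, using only the counting function $n(R)=\#(\Lambda\cap[0,R])$, counted with multiplicity throughout, so that the multiset version follows verbatim.

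\emph{Second inequality, $\underline d_{\rm nat}(\Lambda)\ge D_+^-(\Lambda)$.} This is immediate from the definitions. For fixed $R$, take $x=0$ in the infimum defining $D_+^-$:
\[
  \inf_{x\ge0}\frac{\#(\Lambda\cap[x,x+R])}{R}\le\frac{n(R)}{R}.
\]
Taking $\liminf_{R\to\infty}$ of both sides gives $D_+^-(\Lambda)\le\underline d_{\rm nat}(\Lambda)$.

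\emph{First inequality, $L(\Lambda)\ge\underline d_{\rm nat}(\Lambda)$.} Put $\delta=\underline d_{\rm nat}(\Lambda)$ and assume $\delta>0$, since otherwise there is nothing to prove. Fix $\rho>1$ and $\varepsilon\in(0,\delta)$. Choose $x_0$ so that $n(R)\ge(\delta-\varepsilon)R$ for all $R\ge x_0$. There is also a trivial bound $n(x)\le Cx$ along some sequence $x\to\infty$: if $\limsup n(x)/x=\infty$, then the block sums become large along a sequence and the inequality is easy. To avoid splitting into cases, we instead bound the $\limsup$ below via a telescoping average over dyadic-type blocks.

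For $x\ge x_0$ and $N\in\N$, write
\[
  S(x,\rho^N x)=\sum_{\lambda\in\Lambda\cap(x,\rho^Nx]}\frac1\lambda
  =\sum_{i=0}^{N-1}S(\rho^ix,\rho^{i+1}x).
\]
Each block sum is at most the $\limsup$ quantity $\ell_\rho:=\limsup_{x\to\infty}S(x,\rho x)$ plus $o(1)$. Hence
\[
  S(x,\rho^Nx)\le N(\ell_\rho+\varepsilon)\quad\text{for large }x.
\]
For the lower bound we use Abel summation:
\[
  S(x,y)=\frac{n(y)}{y}-\frac{n(x)}{x}+\int_x^y\frac{n(u)}{u^2}\,\dd u
  \ge\frac{n(y)}{y}-\frac{n(x)}{x}+(\delta-\varepsilon)\log\frac yx.
\]
Take $y=\rho^Nx$. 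The boundary term $n(x)/x$ is a single fixed quantity, and $n(y)/y\ge0$. This gives
\[
  N(\ell_\rho+\varepsilon)\ge(\delta-\varepsilon)N\log\rho-\frac{n(x)}{x}.
\]
Now fix $x$ and let $N\to\infty$. The boundary term is independent of $N$, so dividing by $N$ yields $\ell_\rho+\varepsilon\ge(\delta-\varepsilon)\log\rho$. Letting $\varepsilon\to0$ gives $\ell_\rho/\log\rho\ge\delta$ for every $\rho>1$, and taking the infimum over $\rho$ gives $L(\Lambda)\ge\delta$.

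Omitting finitely many points near zero changes neither $n(R)/R$ asymptotically nor the block sums for large $x$, so this convention is harmless.

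\emph{Main obstacle.} The delicate step is justifying that the block sums over $[\rho^ix,\rho^{i+1}x]$ are all bounded by $\ell_\rho+\varepsilon$ uniformly for $x$ large. This holds because $\rho^ix\ge x$ and the definition of $\limsup$ gives a threshold beyond which every block sum is at most $\ell_\rho+\varepsilon$. If $\ell_\rho=\infty$, the inequality is trivial. Closed versus half-open endpoints affect each block by at most the endpoint masses. These are $O(1/x)$ only under finite multiplicity; in general, using half-open blocks in the telescoping and noting that the closed-block $\limsup$ dominates the half-open one handles this.
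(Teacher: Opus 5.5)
Your proof is correct. The second inequality is exactly the paper's argument. For the first, you and the paper share the same two tools: the Stieltjes/Abel identity and the eventual bound $n(t)\ge(\delta-\varepsilon)t$. The difference is how you handle the negative boundary term $-n(x)/x$.

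\emph{The paper's route.} It uses a single block $(x,\rho x]$ and evaluates along a sequence of $x$ on which $n(x)/x\le d+\varepsilon$, which exists because $d$ is a $\liminf$. This requires $d<\infty$, so the case $d=\infty$ needs a separate iteration argument.

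\emph{Your route.} You telescope over $N$ consecutive $\rho$-blocks, bound each block by $\ell_\rho+\varepsilon$, and let $N\to\infty$ with $x$ fixed. The boundary term is then averaged away, and you can even discard $n(\rho^Nx)/(\rho^Nx)\ge0$.

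\emph{What each buys.} Your Ces\`aro-type averaging needs no special choice of $x$. It also covers $\delta=\infty$ by the same computation, but only after a small rewording. As written, ``$\varepsilon\in(0,\delta)$, $n(R)\ge(\delta-\varepsilon)R$'' is meaningless when $\delta=\infty$; replace $\delta-\varepsilon$ by an arbitrary finite $K<\delta$, and you obtain $\ell_\rho\ge K\log\rho$ for every such $K$. The paper's one-block argument is shorter when $d<\infty$. It also bounds individual block sums from below along an explicit sequence, not merely on average.

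\emph{Endpoints.} You dominate each half-open block by the closed-block $\limsup$. This matters when multiplicities are unbounded, and it is more careful than the paper, which silently switches from $[x,\rho x]$ in the definition of $L$ to $(x,\rho x]$ in the proof.

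\emph{The aside.} Your remark about $\limsup n(x)/x=\infty$ is superfluous; the relevant dichotomy there would be on the $\liminf$. Since the telescoping argument never uses it, you can delete it.
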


\begin{proof}
The second inequality follows by restricting the infimum in the definition of
$D_+^-$ to intervals beginning at the origin.  To prove the first, let
\[
  N(t)=\#(\Lambda\cap(0,t]),
  \qquad d=\underline d_{\rm nat}(\Lambda).
\]
If $d=\infty$, fix $\rho>1$.  If the logarithmic block sums on
$(x,\rho x]$ had finite upper limit, then for all sufficiently large $x$,
\[
  N(\rho x)-N(x)\le C\rho x
\]
for some $C>0$, because every point of $(x,\rho x]$ contributes at least
$1/(\rho x)$.  Iterating this estimate along $x,\rho x,\rho^2x,\ldots$ would
make $N(\rho^nx)/(\rho^nx)$ bounded, contradicting $d=\infty$.  Hence every
logarithmic-block upper limit is infinite and $L(\Lambda)=\infty$.

Assume now that $d<\infty$ and fix $\rho>1$.  Stieltjes integration by parts gives
\begin{equation}\label{eq:stieltjes-log-density-general}
 \sum_{\lambda\in\Lambda\cap(x,\rho x]}\frac1\lambda
 =\frac{N(\rho x)}{\rho x}-\frac{N(x)}x
   +\int_x^{\rho x}\frac{N(t)}{t^2}\,\dd t.
\end{equation}
Given $\varepsilon>0$, one has $N(t)/t\ge d-\varepsilon$ for all sufficiently
large $t$, and there are arbitrarily large $x$ for which
$N(x)/x\le d+\varepsilon$.  Along such a sequence, the right-hand side of
\eqref{eq:stieltjes-log-density-general} is bounded below by
\(
  (d-\varepsilon)\log\rho-2\varepsilon.
\)
Taking the upper limit in $x$, then letting $\varepsilon\downarrow0$, yields
\[
  \limsup_{x\to\infty}
  \sum_{\lambda\in\Lambda\cap(x,\rho x]}\frac1\lambda
  \ge d\log\rho.
\]
Dividing by $\log\rho$ and taking the infimum over $\rho>1$ establishes the result.  The multiset version
uses the same counting function with multiplicities.
\end{proof}

The next lemma gives a uniqueness criterion in terms of growth on the
imaginary axis and the density of positive zeros.  It is a consequence of
the Carlson--Rubel theorem \cite{Rubel56,MalliavinRubel61}.

\begin{lemma}
\label{lem:carlson-rubel-general}
Let $F$ be an entire function of exponential type and let $a\ge0$.
Suppose that, for every $\varepsilon>0$,
\begin{equation}\label{eq:rubel-growth-general}
   |F(iy)|\le C_\varepsilon e^{(a+\varepsilon)|y|},
   \qquad y\in\R.
\end{equation}
If $F\not\equiv0$ and its zero divisor contains a locally finite
multisequence $\Lambda\subset(0,\infty)$, then
\begin{equation}\label{eq:rubel-bound-general}
   L_{\rm mult}(\Lambda)\le a/\pi,
\end{equation}
where $L_{\rm mult}$ denotes \eqref{eq:log-block-density-general} with
multiplicities counted.  For a set of distinct zeros this gives
$L(\Lambda)\le a/\pi$.
\end{lemma}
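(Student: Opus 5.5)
The plan is to reduce the statement to the Carlson--Rubel theorem in the right half-plane; I take the form in \cite{Rubel56,MalliavinRubel61} as given. That theorem says: if $G$ is holomorphic and of exponential type in $\operatorname{Re}z>0$, continuous up to the boundary, satisfies $|G(iy)|\le Ce^{a|y|}$, and is not identically zero, then its positive zeros $\Lambda$ satisfy $L(\Lambda)\le a/\pi$. The only work is to adapt the hypotheses: the growth bound holds only with an $\varepsilon$-loss, and multiplicities must be counted.

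\emph{Step 1: the $\varepsilon$-loss.} Fix $\varepsilon>0$. Since $F$ has exponential type, it is of exponential type in the closed right half-plane, and by \eqref{eq:rubel-growth-general} $|F(iy)|\le C_\varepsilon e^{(a+\varepsilon)|y|}$. Rubel's theorem applies to $F$ restricted to $\operatorname{Re}z\ge0$ with $a+\varepsilon$ in place of $a$. It gives $L(\Lambda)\le (a+\varepsilon)/\pi$, and letting $\varepsilon\downarrow0$ gives the bound. Rubel's theorem in its Stieltjes form bounds $\limsup_x \sum_{x<\lambda\le\rho x}1/\lambda\le (a/\pi)\log\rho+o(1)$ uniformly in large $\rho$, up to the normalization in \eqref{eq:log-block-density-general}. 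Because $L$ is an infimum over $\rho$, the bound passes to $L$.

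\emph{Step 2: multiplicities.} Rubel's argument works with the zero-counting function of $F$ in the right half-plane, and that function counts multiplicity. Equivalently, I can apply the divisor version: the canonical Carleman/Jensen-type formula on half-disks controls $\sum r_k^{-1}$ over zeros with multiplicity. Since $\Lambda$ is contained in the zero divisor, counted with multiplicities not exceeding those of $F$, and $L_{\rm mult}$ is monotone under inclusion of multisets, we get $L_{\rm mult}(\Lambda)\le L_{\rm mult}(\text{positive zeros of }F)\le a/\pi$. Zeros at the origin are excluded, and finitely many small zeros do not affect $L$.

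\emph{Step 3: the distinct-zero case.} For a set of distinct zeros, $L(\Lambda)\le L_{\rm mult}(\Lambda)$. The claim follows.

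\emph{Main obstacle.} The delicate point is checking that the cited Carlson--Rubel statement is formulated with exactly the same normalization of $L$, namely the infimum over $\rho$ of the upper block sums divided by $\log\rho$. If instead it is phrased via Rubel's "measurable" block-density condition, I would pass through the Carleman formula directly. For the half-disk $\{|z|<R,\ \operatorname{Re}z>0\}$ that formula gives
\[
\sum_{r_k<R}\Bigl(\frac1{r_k}-\frac{r_k}{R^2}\Bigr)\cos\theta_k\le \frac1{2\pi}\int_1^R\Bigl(\frac1{y^2}-\frac1{R^2}\Bigr)\log|F(iy)F(-iy)|\,\dd y+O(1).
\]
Here the right side is at most $\frac{a+\varepsilon}{\pi}\log R+O(1)$, and the positive real zeros have $\cos\theta_k=1$. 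Differencing this estimate between $x$ and $\rho x$ is not directly monotone, so I would follow Malliavin--Rubel's argument from the logarithmic-average bound to the block density.
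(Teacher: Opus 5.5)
Your proof is essentially the paper's: both simply invoke the necessity half of the Carlson--Rubel theorem (Malliavin--Rubel, Theorem~6.2). The paper phrases the hypothesis through the indicator, $h_F(\pm\pi/2)\le a$, which absorbs your $\varepsilon\downarrow0$ step, and it uses Jensen's formula to check that $\Lambda$ has finite upper density, a hypothesis of that theorem you should also record. Your side remarks are inaccurate but unused: $L_{\rm mult}(\Lambda)\le a/\pi$ only gives block sums $\le(a/\pi)\log\rho+o(\log\rho)$, not $+o(1)$; and the half-disk Carleman estimate only yields the logarithmic-average bound $\sum_{\lambda<R}1/\lambda\le\frac{a+\varepsilon}{\pi}\log R+O(1)$, which is strictly weaker than the block-density bound, so your fallback would not close the argument as sketched.
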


\begin{proof}
The assertion is immediate for finite $\Lambda$.  Define the
\emph{indicator function} of $F$ (see \cite[Chapter~VI]{Levin80}) by
\[
 h_F(\theta)=\limsup_{r\to\infty}
       \frac{\log|F(re^{i\theta})|}{r},
 \qquad \theta\in\R.
\]
The growth assumption \eqref{eq:rubel-growth-general} gives
\[
 h_F(\pi/2)\le a,\qquad h_F(-\pi/2)\le a.
\]

Let $N_F(R)$ denote the number of zeros of $F$ in $|z|\le R$,
counted with multiplicity.  Jensen's formula gives $N_F(R)=O(R)$.
Since $\#(\Lambda\cap(0,R])\le N_F(R)$, the multisequence $\Lambda$
has finite upper natural density.
The necessity direction of \cite[Theorem~6.2]{MalliavinRubel61}
applies to such multisequences and states that
$L_{\rm mult}(\Lambda)\le b$ whenever
$h_F(\pm\pi/2)\le\pi b$.  Taking $b=a/\pi$ proves
\eqref{eq:rubel-bound-general}.
\end{proof}

\subsection{Completeness and the complement property}

We apply the uniqueness criterion to the entire coefficient functions
$z\mapsto\langle T^zf_j,h\rangle$.  This yields a completeness condition
determined by the full spectral sector.

\begin{theorem}
\label{thm:sectorial-completeness-general}
Let $T\in\mathcal B(\Hh)$ be invertible, assume
$\sigma(T)\subset\widehat\Sigma_c$ for some $0\le c<\pi$, and suppose that
\(
  \{T^nf_j:n\in\Nzero,\ 1\le j\le m\}
\)
is complete in $\Hh$.  For each channel let
$\Lambda_j\subset[0,\infty)$ be locally finite.  As in the definition of
$L$, a possible point at zero is ignored.  If
\begin{equation}\label{eq:log-completeness-condition-general}
   L(\Lambda_j)>\frac c\pi,
   \qquad 1\le j\le m,
\end{equation}
then
\(
  \{T^\lambda f_j:\lambda\in\Lambda_j,\ 1\le j\le m\}
\)
is complete in $\Hh$.
\end{theorem}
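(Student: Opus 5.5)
Suppose $h\in\Hh$ is orthogonal to every $T^\lambda f_j$ with $\lambda\in\Lambda_j$. The plan is to show that $h=0$ by applying \cref{lem:carlson-rubel-general} to the coefficient functions
\[
  F_j(z)=\ip{T^zf_j}{h}=\ip{e^{z\Log T}f_j}{h},\qquad z\in\C .
\]
Here $\Log T$ is defined by holomorphic functional calculus. This is legitimate because $\sigma(T)\subset\widehat\Sigma_c$ is a compact set in $\C\setminus(-\infty,0]$, so the principal branch is holomorphic near $\sigma(T)$. Each $F_j$ is entire, and $|F_j(z)|\le \norm{f_j}\norm h e^{|z|\norm{\Log T}}$, so each $F_j$ has exponential type. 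By hypothesis, $F_j$ vanishes on $\Lambda_j\setminus\{0\}$. Once we know $F_j\equiv0$ for every $j$, we get $\ip{T^nf_j}{h}=F_j(n)=0$ for all $n\in\Nzero$, and completeness of the integer multiorbit gives $h=0$.

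The key step is the growth bound on the imaginary axis: for each $\varepsilon>0$,
\[
  \norm{T^{iy}}=\norm{e^{iy\Log T}}\le C_\varepsilon e^{(c+\varepsilon)|y|}.
\]
The spectrum of $i\Log T$ consists of the points $i(\log\rho+it)=-t+i\log\rho$ with $|t|\le c$. By the spectral mapping theorem, the spectral radius of $e^{\pm i\Log T}$ is at most $e^{c}$. Spectral radius alone, however, only controls the norm along integer $y$ up to subexponential factors. A cleaner route is Gelfand's spectral radius formula applied to the one-parameter group $U(y)=e^{iy\Log T}$, $y\in\R$, whose generator $i\Log T$ is bounded. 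Its growth bound satisfies
\[
  \omega=\lim_{y\to\infty}y^{-1}\log\norm{U(y)}=\sup\operatorname{Re}\sigma(i\Log T)\le c ,
\]
and the same holds for $y\to-\infty$ using $\sup\operatorname{Re}\sigma(-i\Log T)\le c$. Spectral bound equals growth bound for bounded generators, since $\norm{U(y)}\le \norm{U(1)^{\lfloor y\rfloor}}\sup_{0\le s\le1}\norm{U(s)}$ and $r(U(1))\le e^{c}$. Therefore $|F_j(iy)|\le C_\varepsilon\norm{f_j}\norm h e^{(c+\varepsilon)|y|}$, which is hypothesis \eqref{eq:rubel-growth-general} with $a=c$.

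Now apply \cref{lem:carlson-rubel-general}. If $F_j\not\equiv0$, then its zero set contains $\Lambda_j\cap(0,\infty)$, which forces $L(\Lambda_j)\le c/\pi$. This contradicts \eqref{eq:log-completeness-condition-general}, so $F_j\equiv0$ for each $j$, and we conclude as above. No power-boundedness or frame hypothesis is used, since only the imaginary-axis growth enters.

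The main obstacle is the growth step: I need the $\varepsilon$-exact exponent $c$ for a nonnormal $\Log T$, with no extra contribution from $|\log\rho|$. That contribution appears only in the real direction, which is harmless for exponential type. The semigroup spectral radius argument above handles this; the one point that still needs checking is that the $\sup_{0\le s\le 1}$ factor is finite, which holds by continuity of $s\mapsto U(s)$.
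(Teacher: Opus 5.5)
Your proposal is correct and follows the paper's proof essentially step for step. Like the paper, you take the coefficient functions $\ip{T^zf_j}{h}$ and get the imaginary-axis bound $\norm{e^{iy\Log T}}\le C_\varepsilon e^{(c+\varepsilon)|y|}$ from the spectral radius of $e^{\pm i\Log T}$, splitting $|y|$ into an integer part and a bounded remainder. You then apply \cref{lem:carlson-rubel-general} with $a=c$ and finish with completeness of the integer multiorbit.
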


\begin{proof}
Let $h$ be orthogonal to the sampled family and set
\[
   F_{h,j}(z)=\langle T^zf_j,h\rangle,
   \qquad z\in\C.
\]
Since $\Log T$ is bounded, $F_{h,j}$ is entire of exponential type.  The
spectral inclusion gives
\[
   \sigma(\Log T)\subset
   \{u+iv:u\in\R,\ |v|\le c\}.
\]
Applying the spectral-radius formula to $e^{i\Log T}$ and $e^{-i\Log T}$,
and splitting $|y|$ into an integer and a bounded remainder, shows that for
every $\varepsilon>0$,
\[
  \|e^{iy\Log T}\|
  \le C_\varepsilon e^{(c+\varepsilon)|y|},
  \qquad y\in\R.
\]
Thus $F_{h,j}$ satisfies the imaginary-axis estimate in
\cref{lem:carlson-rubel-general} with $a=c$ and vanishes on $\Lambda_j$.
The density condition \eqref{eq:log-completeness-condition-general}
therefore gives $F_{h,j}\equiv0$.
Hence
\[
  \langle T^nf_j,h\rangle=0,
  \qquad n\in\Nzero,\ 1\le j\le m.
\]
Completeness of the integer multiorbit gives $h=0$.
\end{proof}

\begin{corollary}
\label{cor:natural-density-completeness-general}
In the hypotheses of \cref{thm:sectorial-completeness-general}, one may replace \eqref{eq:log-completeness-condition-general} with
\[
  \underline d_{\rm nat}(\Lambda_j)> c/\pi \quad \mbox{ or } \quad D_+^-(\Lambda_j)>c/\pi,
  \qquad 1\le j\le m.
\]
\end{corollary}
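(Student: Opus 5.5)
The corollary follows from \cref{thm:sectorial-completeness-general} once the hypothesis on each $\Lambda_j$ is upgraded to the logarithmic condition \eqref{eq:log-completeness-condition-general}. So we keep the operator hypotheses unchanged: $T$ is invertible, $\sigma(T)\subset\widehat\Sigma_c$, and the integer multiorbit is complete. The only task is to show, for each channel $j$, that either density condition implies $L(\Lambda_j)>c/\pi$.

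The comparison comes from \cref{lem:uniform-log-general}. It gives, for every locally finite $\Lambda_j\subset[0,\infty)$,
\[
  L(\Lambda_j)\ge\underline d_{\rm nat}(\Lambda_j)\ge D_+^-(\Lambda_j).
\]
Hence if $\underline d_{\rm nat}(\Lambda_j)>c/\pi$, then $L(\Lambda_j)>c/\pi$. If instead $D_+^-(\Lambda_j)>c/\pi$, the chain gives the same conclusion. Each channel may use either alternative independently, since \cref{thm:sectorial-completeness-general} imposes its condition channel by channel.

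Two minor points need checking.

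\begin{itemize}
\item The theorem ignores a possible point of $\Lambda_j$ at zero. Removing a single point (or finitely many) changes neither $\underline d_{\rm nat}$ nor $D_+^-$ nor $L$, because each is an asymptotic quantity. So the inequalities still hold for the set actually used.
\item \cref{lem:uniform-log-general} needs no separation, matching the absence of separation in \cref{thm:sectorial-completeness-general}.
\end{itemize}

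With $L(\Lambda_j)>c/\pi$ established for every $j$, \cref{thm:sectorial-completeness-general} gives completeness of $\{T^\lambda f_j:\lambda\in\Lambda_j,\ 1\le j\le m\}$. There is no real obstacle: all the analytic content is already in \cref{lem:uniform-log-general} and \cref{lem:carlson-rubel-general}.
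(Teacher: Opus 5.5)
Your proposal is correct and matches the paper's proof, which simply combines the chain $L(\Lambda_j)\ge\underline d_{\rm nat}(\Lambda_j)\ge D_+^-(\Lambda_j)$ from \cref{lem:uniform-log-general} with \cref{thm:sectorial-completeness-general}, channel by channel. Your remarks about discarding a point at zero and about not needing separation are accurate; the paper leaves them implicit.
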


\begin{proof}
Apply \cref{lem:uniform-log-general,thm:sectorial-completeness-general}.
\end{proof}

\begin{remark}
\label{rem:BM-refinement}
There is a Beurling--Malliavin sufficient criterion for complete orbits
under the additional hypothesis
\begin{equation}\label{eq:bounded-real-power-group}
  \sup_{t\in\mathbb R}\|T^t\|<\infty.
\end{equation}
For a discrete set $\Lambda\subset\mathbb R$, put
\[
\begin{gathered}
 R(\Lambda)=\sup\left\{a>0:
   \closspan\{e^{i\lambda t}:\lambda\in\Lambda\}=L^2(-a,a)\right\},\\
 D_{\rm BM}(\Lambda)=R(\Lambda)/\pi.
\end{gathered}
\]
This is the exterior Beurling--Malliavin density in the stated
normalization \cite{BM67,MNK06}.  Write $PW_a$ for the Paley--Wiener
space of entire functions of exponential type at most $a$ whose
restrictions to $\mathbb R$ belong to $L^2(\mathbb R)$.

Suppose that $\sigma(T)\subset\widehat\Sigma_c$ and the integer
multiorbit is complete.  Under \eqref{eq:bounded-real-power-group},
$\sigma(\Log T)\subset i[-c,c]$: boundedness for both signs of $t$
forces the real parts of its spectral points to vanish.  The
spectral-radius formula for powers of $\Log T$ then shows that each
$F_{h,j}(z)=\langle T^zf_j,h\rangle$ has exponential type at most $c$.
It is also bounded on the real axis.  If
$D_{\rm BM}(\Lambda_j)>c/\pi$, choose $\varepsilon>0$ such that
$c+\varepsilon<R(\Lambda_j)$.  A nonzero coefficient function
vanishing on $\Lambda_j$ would give a nonzero function
\[
  F_{h,j}(z)\left(\frac{\sin(\varepsilon z/2)}{z}\right)^2
  \in PW_{c+\varepsilon}
\]
vanishing on $\Lambda_j$, with the removable value at zero.
Paley--Wiener duality contradicts the definition of $R(\Lambda_j)$.
Thus the sampled multiorbit is complete.  Applying the same argument
to products of coefficient functions replaces the density hypothesis
in \cref{thm:complement-property-general} by
$D_{\rm BM}(\Lambda)>2c/\pi$.

Under \eqref{eq:bounded-real-power-group}, a unilateral multiorbit can
be a frame only on the zero space.  Indeed, if it were a frame with lower
bound $a$, then
\[
 a\|T^{*n}h\|^2
 \le\sum_{k\ge n}\sum_{j=1}^m
       |\langle h,T^kf_j\rangle|^2\longrightarrow0.
\]
Uniform boundedness of $T^{-n}$ would imply
$\|h\|\le C\|T^{*n}h\|\to0$, so $\mathcal H=\{0\}$.
\end{remark}

For $J\subset\{1,\ldots,m\}$, define the channel-cyclic subspace
\begin{equation}\label{eq:channel-cyclic-subspace-general}
  \Hh_J=\overline{\operatorname{span}}
  \{T^nf_j:n\in\Nzero,\ j\in J\}.
\end{equation}

In the following proposition, we establish that fractional powers do not enlarge channel-cyclic subspaces.

\begin{proposition}
\label{prop:fractional-channel-cyclic-general}
Let $T$ be invertible with
$\sigma(T)\subset\widehat\Sigma_c$, $c<\pi$.  For every
$J\subset\{1,\ldots,m\}$,
\begin{equation}\label{eq:fractional-channel-equality-general}
 \overline{\operatorname{span}}
 \{T^sf_j:s\ge0,\ j\in J\}
 =\overline{\operatorname{span}}
 \{T^nf_j:n\in\Nzero,\ j\in J\}=\Hh_J.
\end{equation}
\end{proposition}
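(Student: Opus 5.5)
The inclusion $\supseteq$ is trivial, since the integer powers $T^n$, $n\in\Nzero$, are among the real powers $T^s$, $s\ge0$; for integer $s$ the principal-logarithm power $e^{s\Log T}$ coincides with the ordinary power by the holomorphic functional calculus. For $\subseteq$, I argue by duality, in the spirit of the proof of \cref{thm:sectorial-completeness-general}. Fix $h\perp\Hh_J$ and $j\in J$. It suffices to show that the entire function
\[
  F_{h,j}(z)=\langle T^zf_j,h\rangle,\qquad z\in\C,
\]
vanishes at every real $s\ge0$. Indeed, then $h$ is orthogonal to every $T^sf_j$ with $s\ge0$, $j\in J$, and the left-hand closed span lies in $\Hh_J$.

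By hypothesis $F_{h,j}$ vanishes on $\Nzero$. As in the proof of \cref{thm:sectorial-completeness-general}, $F_{h,j}$ is entire of exponential type because $\Log T$ is bounded. The inclusion $\sigma(\Log T)\subset\{u+iv:|v|\le c\}$, together with the spectral-radius formula for $e^{\pm i\Log T}$, gives
\[
  |F_{h,j}(iy)|\le C_\varepsilon e^{(c+\varepsilon)|y|}.
\]
So \cref{lem:carlson-rubel-general} applies with $a=c$. The zero set contains $\Lambda=\N$, and $L(\N)=1$, since $\sum_{x\le n\le\rho x}1/n\sim\log\rho$. Because $c<\pi$ we have $L(\N)=1>c/\pi$, so \cref{lem:carlson-rubel-general} forces $F_{h,j}\equiv0$. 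In particular $F_{h,j}(s)=0$ for all real $s\ge0$, which is what we needed. This is essentially Carlson's theorem: the bound $c<\pi$ is exactly what rules out nonzero functions such as $\sin(\pi z)$ vanishing on the integers.

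The one step needing care is the growth estimate on the imaginary axis. It is already established in the proof of \cref{thm:sectorial-completeness-general} under the same hypothesis $\sigma(T)\subset\widehat\Sigma_c$, with no frame or power-boundedness assumption. It comes from $\|e^{iy\Log T}\|\le C_\varepsilon e^{(c+\varepsilon)|y|}$, obtained by writing $|y|$ as an integer part plus a bounded remainder. I would cite that argument directly. Once this is in hand, the proposition is a single application of the Carlson--Rubel lemma to the integer zero set. Neither invertibility of the synthesis operator nor completeness of the integer orbit is used, only the orthogonality of $h$ to the channels in $J$.
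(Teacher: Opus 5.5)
Your proposal is correct and is essentially the paper's proof: you use duality against $h\perp\Hh_J$, the vanishing of $F_{h,j}$ on the integers, the imaginary-axis growth bound from the proof of \cref{thm:sectorial-completeness-general}, and \cref{lem:carlson-rubel-general} with $L(\N)=1>c/\pi$ to force $F_{h,j}\equiv0$. The extra details you supply, namely the trivial inclusion via $e^{n\Log T}=T^n$ and dropping the point at zero, are also correct.
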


\begin{proof}
Only the inclusion from left to right requires proof.  Let $x\perp\Hh_J$.
For $j\in J$, the function
$F_{x,j}(z)=\langle T^zf_j,x\rangle$ vanishes on $\Nzero$.  Since
$L(\Nzero)=1>c/\pi$, \cref{lem:carlson-rubel-general} and the growth argument
from \cref{thm:sectorial-completeness-general} give
$F_{x,j}\equiv0$.  Hence $x$ annihilates every $T^sf_j$, $s\ge0$.
Taking orthogonal complements proves the assertion.
\end{proof}

A family $(\phi_i)_{i\in I}$ has the \emph{complement property} if, for every
$E\subset I$, either $(\phi_i)_{i\in E}$ or $(\phi_i)_{i\in I\setminus E}$ is
complete. This property is useful in the study of the phase-retrieval problem \cite{BCE06}.
The following theorem provides a bound on logarithmic density, which yields a useful criterion for the complement property.

\begin{theorem}
\label{thm:complement-property-general}
Let $T\in\mathcal B(\Hh)$ be invertible, assume
$\sigma(T)\subset\widehat\Sigma_c$, and let a common locally finite set
$\Lambda\subset[0,\infty)$ satisfy
\begin{equation}\label{eq:cp-density-general}
  L(\Lambda)>\frac{2c}{\pi}.
\end{equation}
Then the sampled family
\(
  \{T^\lambda f_j:\lambda\in\Lambda,\ 1\le j\le m\}
\)
has the complement property if and only if
\begin{equation}\label{eq:channel-complement-general}
  \Hh_J=\Hh\quad\hbox{or}\quad \Hh_{J^c}=\Hh
  \qquad\text{for every }J\subset\{1,\ldots,m\}.
\end{equation}
\end{theorem}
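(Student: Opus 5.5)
The plan is to prove the two directions separately. The backward direction reduces to a product of coefficient functions to which \cref{lem:carlson-rubel-general} applies with $a=2c$. The forward direction reduces, via \cref{prop:fractional-channel-cyclic-general}, to channel splittings.

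\emph{Necessity.} Assume the sampled family has the complement property. Fix $J\subset\{1,\ldots,m\}$ and take $E=\Lambda\times J$, so its complement is $\Lambda\times J^c$. By the complement property, one of
\[
\{T^\lambda f_j:\lambda\in\Lambda,\ j\in J\}
\quad\text{or}\quad
\{T^\lambda f_j:\lambda\in\Lambda,\ j\in J^c\}
\]
is complete in $\Hh$. The closed span of the first family sits inside $\overline{\operatorname{span}}\{T^sf_j:s\ge0,\ j\in J\}$, and \cref{prop:fractional-channel-cyclic-general} identifies that space with $\Hh_J$. The same holds for $J^c$. Hence $\Hh_J=\Hh$ or $\Hh_{J^c}=\Hh$, which is \eqref{eq:channel-complement-general}. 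This direction needs no density hypothesis.

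\emph{Sufficiency.} Assume \eqref{eq:channel-complement-general} and let $E\subset\Lambda\times\{1,\ldots,m\}$ be arbitrary, with complement $E^c$. Suppose, for contradiction, that neither $E$ nor $E^c$ gives a complete family. Then there are nonzero vectors $h_1\perp\{T^\lambda f_j:(\lambda,j)\in E\}$ and $h_2\perp\{T^\lambda f_j:(\lambda,j)\in E^c\}$. For each pair $j,k$ consider
\[
G_{j,k}(z)=F_{h_1,j}(z)\,F_{h_2,k}(z),\qquad F_{h,j}(z)=\langle T^zf_j,h\rangle .
\]
The imaginary-axis estimate from the proof of \cref{thm:sectorial-completeness-general} holds with $a=c$ for each factor, so $G_{j,k}$ is entire of exponential type and satisfies \eqref{eq:rubel-growth-general} with $a=2c$.

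Take $k=j$. At each $\lambda\in\Lambda$, the pair $(\lambda,j)$ lies in $E$ or in $E^c$, so one of the two factors vanishes there. Thus $G_{j,j}$ vanishes on all of $\Lambda$. Since $L(\Lambda)>2c/\pi$, \cref{lem:carlson-rubel-general} forces $G_{j,j}\equiv0$. Because the ring of entire functions has no zero divisors, $F_{h_1,j}\equiv0$ or $F_{h_2,j}\equiv0$ for each $j$.

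Let $J$ be the set of indices $j$ with $F_{h_1,j}\equiv0$. Then $F_{h_2,j}\equiv0$ for every $j\in J^c$. So $h_1\perp T^nf_j$ for all $n\in\Nzero$ and $j\in J$, which gives $h_1\perp\Hh_J$. Likewise $h_2\perp\Hh_{J^c}$. Since $h_1$ and $h_2$ are nonzero, neither $\Hh_J$ nor $\Hh_{J^c}$ equals $\Hh$, contradicting \eqref{eq:channel-complement-general}.

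The main point needing care is the channel bookkeeping. The set $E$ may mix channels arbitrarily across different $\lambda$, so one must work channel by channel. It is the diagonal product $G_{j,j}$, not the off-diagonal $G_{j,k}$, that vanishes on all of $\Lambda$. A second point is that the definition of $L$ ignores a possible point at zero; this is harmless because $G_{j,j}$ still vanishes on $\Lambda\setminus\{0\}$.
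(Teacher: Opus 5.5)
Your proposal is correct and follows the paper's proof essentially step for step. Necessity comes from splitting the index set by channels and applying \cref{prop:fractional-channel-cyclic-general}; sufficiency uses the diagonal products $F_{x,j}F_{y,j}$, \cref{lem:carlson-rubel-general} with $a=2c$, and the absence of zero divisors among entire functions (the off-diagonal products $G_{j,k}$ you introduce are never used and can be dropped).
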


\begin{proof}
If \eqref{eq:channel-complement-general} fails, partition the sampled family
according to the channels in $J$ and $J^c$.  By
\cref{prop:fractional-channel-cyclic-general}, the two resulting spans are
contained in the two proper subspaces $\Hh_J$ and $\Hh_{J^c}$, so the
complement property fails.

Conversely, suppose a partition $E\sqcup E^c$ of the sampled index set makes
both sides incomplete.  Choose nonzero $x,y\in\Hh$ such that $x$ is
orthogonal to the $E$-side and $y$ is orthogonal to the $E^c$-side.  Put
\[
  F_{x,j}(z)=\langle T^zf_j,x\rangle,
  \qquad
  F_{y,j}(z)=\langle T^zf_j,y\rangle.
\]
For every $\lambda\in\Lambda$ and every $j$, one of these two values is zero;
hence
\[
  G_j(z)=F_{x,j}(z)F_{y,j}(z)
\]
vanishes on $\Lambda$.  The growth estimate in the proof of
\cref{thm:sectorial-completeness-general} shows that $G_j$ is entire
of exponential type and has imaginary-axis type at most $2c$.  By
\eqref{eq:cp-density-general} and \cref{lem:carlson-rubel-general},
$G_j\equiv0$.  Since the ring of entire functions has no zero divisors, for
each $j$ either $F_{x,j}\equiv0$ or $F_{y,j}\equiv0$.

Let $J=\{j:F_{x,j}\equiv0\}$.  Then $x\perp\Hh_J$ and
$y\perp\Hh_{J^c}$.  Because $x$ and $y$ are nonzero, both channel-cyclic
subspaces are proper, contradicting \eqref{eq:channel-complement-general}.
\end{proof}

\begin{corollary}
\label{cor:single-complement-general}
Suppose that $\{T^nf:n\in\Nzero\}$ is complete and that the operator
hypotheses of \cref{thm:sectorial-completeness-general} hold.  If
$L(\Lambda)>2c/\pi$, then $\{T^\lambda f:\lambda\in\Lambda\}$ has the
complement property.
\end{corollary}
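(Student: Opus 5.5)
The plan is to deduce the corollary directly from \cref{thm:complement-property-general} with $m=1$ and $f_1=f$. The operator hypotheses of \cref{thm:sectorial-completeness-general} say that $T$ is invertible with $\sigma(T)\subset\widehat\Sigma_c$ for some $0\le c<\pi$. These are exactly the operator assumptions of \cref{thm:complement-property-general}. The density hypothesis $L(\Lambda)>2c/\pi$ is \eqref{eq:cp-density-general}, with $\Lambda$ a common locally finite set.

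Since the sampled family has the single channel $j=1$, the theorem reduces the complement property to checking \eqref{eq:channel-complement-general} for the two subsets $J=\emptyset$ and $J=\{1\}$. For $J=\{1\}$ we have $\Hh_J=\closspan\{T^nf:n\in\Nzero\}$, which equals $\Hh$ by the assumed completeness of the integer orbit. For $J=\emptyset$ we have $J^c=\{1\}$, and the same observation gives $\Hh_{J^c}=\Hh$. Hence \eqref{eq:channel-complement-general} holds, and the ``if'' direction of \cref{thm:complement-property-general} yields the complement property of $\{T^\lambda f:\lambda\in\Lambda\}$.

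The only point needing care is the degenerate subset $J=\emptyset$, where the convention is $\Hh_\emptyset=\{0\}$; the disjunction is then satisfied by the complementary subspace. Correspondingly, in the proof of the theorem, a partition with $E$ or $E^c$ empty is handled by completeness of the full sampled family. That completeness also follows from \cref{thm:sectorial-completeness-general}, because $L(\Lambda)>2c/\pi\ge c/\pi$. No further obstacle is expected.
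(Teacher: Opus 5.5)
Your proposal is correct and matches the paper's proof: specialize \cref{thm:complement-property-general} to $m=1$, where the channel-cyclic condition \eqref{eq:channel-complement-general} is automatic because completeness of $\{T^nf\}$ settles both $J=\{1\}$ and $J=\emptyset$. Your extra remark about partitions with an empty side is harmless but not needed, since the theorem's proof already handles that case.
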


\begin{proof}
For one channel, \eqref{eq:channel-complement-general} is automatic.
\end{proof}

\subsection{Stable sampling at separated times}

Combining compact Fredholm localization with the completeness criterion
gives a frame theorem with separate density conditions for closed range
and completeness.

\begin{theorem}
\label{thm:separated-general}
Assume that \eqref{eq:intro-frame} is a frame, $T$ is invertible,
and \eqref{eq:intro-sector} and \eqref{eq:essential-sector-general} hold.
Let $\Lambda\subset[0,\infty)$ be uniformly separated.  If
\begin{equation}\label{eq:separated-essential-density-main}
   D_+^-(\Lambda)>c_{\mathrm e}/\pi,
\end{equation}
then the family
\begin{equation}\label{eq:separated-family-main}
   \{T^\lambda f_j:\lambda\in\Lambda,\ 1\le j\le m\}
\end{equation}
is Bessel and its synthesis operator has closed range of finite
codimension.  If, in addition,
\begin{equation}\label{eq:separated-log-density-main}
   L(\Lambda)>c/\pi,
\end{equation}
then the family is a frame.  In particular, the single condition
\begin{equation}\label{eq:separated-density-main}
   D_+^-(\Lambda)>c/\pi
\end{equation}
suffices.

The same assertions hold for channel-dependent uniformly separated sets
$\Lambda_j$: the synthesis operator of
$\{T^\lambda f_j:\lambda\in\Lambda_j,\ 1\le j\le m\}$ has closed
range of finite codimension if $D_+^-(\Lambda_j)>c_{\mathrm e}/\pi$
for every $j$, and the family is a frame if also $L(\Lambda_j)>c/\pi$
for every $j$.
\end{theorem}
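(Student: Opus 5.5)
The proof will go through \cref{thm:fredholm-transfer-general}. Fix $d$ with $c_{\mathrm e}<d<\pi$ and $d/\pi<D_+^-(\Lambda)$, which is possible because $D_+^-(\Lambda)>c_{\mathrm e}/\pi$. Uniform separation gives integer parts of bounded multiplicity, so \eqref{eq:integer-multiplicity-general} holds. The theorem then yields
\[
D_{T,\Lambda}=V_dD_{\mathcal E_d(\Lambda)}+K_\Lambda
\]
with $V_d$ invertible and $K_\Lambda$ compact. Boundedness of $D_{T,\Lambda}$, and hence the Bessel property, was already established in the proof of that theorem. It therefore suffices to show that $D_{\mathcal E_d(\Lambda)}$ is onto $\mathcal Y_d$. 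Once that holds, $D_{T,\Lambda}$ is lower semi-Fredholm, so it has closed range of finite codimension.

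To prove surjectivity I will use the augmented set $\widetilde\Lambda=\Lambda\cup\{-1,-2,\ldots\}$. By \cref{lem:augmented-density-general} it is uniformly separated, with $D^-(\widetilde\Lambda)=\min\{1,D_+^-(\Lambda)\}$. We have $d/\pi<1$ and $d/\pi<D_+^-(\Lambda)$, so $D^-(\widetilde\Lambda)>d/\pi=2d/(2\pi)$. Landau/Beurling sampling theory then says that $\{e^{i\lambda t}\}_{\lambda\in\widetilde\Lambda}$ is a frame for $L^2(-d,d)$. Applied channelwise, the vector family $\{e^{i\lambda t}e_j\}$ is a frame for $L^2(-d,d;\C^m)$, and so its projection onto $\mathcal Y_d$ is a frame for $\mathcal Y_d$.

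The main obstacle is removing the negative indices. My plan is to show that, for each $j$, the vectors $P_{\mathcal Y_d}(e^{-int}e_j)$ with $n\ge1$ span at most a finite-dimensional subspace modulo compact error, or more precisely that their synthesis operator is compact. The intended argument is as follows. For $h\in\mathcal N$, the adjoint pairing $\langle R_dh,e^{-int}e_j\rangle=\int\chi h_je^{int}\,dt/2\pi$ is, up to sign conventions, the negative Fourier coefficient $\widehat{\chi h_j}(-n)$. Since $h_j\in H^2$, this coefficient equals the $n$-th coefficient of the anti-analytic part of $\chi h_j$, namely $(I-P_+)M_\chi h$. The corresponding analysis operator is therefore $(I-P_+)M_\chi|_{\mathcal N}$, composed with the invertible $R_d$. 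That operator equals the Hankel operator with the smooth symbol $\chi$, which is compact.

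With this in hand, the synthesis operator of the negative part is compact. Write $D_{\widetilde{\mathcal E}}=D_{\mathcal E_d(\Lambda)}\oplus D_{\mathrm{neg}}$, which is onto. Then $D_{\mathcal E_d(\Lambda)}$ differs from an onto operator by a compact operator in the sense of ranges: $\Ran D_{\mathcal E_d(\Lambda)}+\Ran D_{\mathrm{neg}}=\mathcal Y_d$ with $D_{\mathrm{neg}}$ compact. A standard argument shows this forces $\Ran D_{\mathcal E_d(\Lambda)}$ to be closed and of finite codimension. Concretely, pass to adjoints: $\|D^*y\|+\|D_{\mathrm{neg}}^*y\|\gtrsim\|y\|$, and \cref{lem:peetre-general} then makes $D_{\mathcal E_d(\Lambda)}^*$ upper semi-Fredholm. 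This yields lower semi-Fredholmness of $D_{\mathcal E_d(\Lambda)}$, which is all \cref{thm:fredholm-transfer-general} needs, since lower semi-Fredholmness transfers to $D_{T,\Lambda}$.

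For the frame conclusion, \cref{lem:uniform-log-general} shows $L(\Lambda)\ge D_+^-(\Lambda)$, so \eqref{eq:separated-density-main} implies both density hypotheses. Under $L(\Lambda)>c/\pi$, \cref{thm:sectorial-completeness-general} gives completeness, using completeness of the integer orbit, which holds because it is a frame. Closed range of finite codimension together with dense range gives surjectivity, so the family is a frame.

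The channel-dependent version runs the same argument with the channelwise sets $\widetilde\Lambda_j$. The vector-valued frame property holds channel by channel, and the Hankel compactness argument is already channelwise. The frame conclusion then follows from the channel-dependent form of \cref{thm:sectorial-completeness-general}.
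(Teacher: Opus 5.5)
Your proposal is correct and follows the paper's proof essentially step for step. The shared steps are the choice of $d$, the augmented set $\widetilde\Lambda$ with Beurling's theorem, compactness of the negative-frequency part, transfer of lower semi-Fredholmness through \cref{thm:fredholm-transfer-general}, and completeness from \cref{thm:sectorial-completeness-general}. The differences are minor. You get compactness of the negative-frequency analysis operator from compactness of the Hankel operator with continuous symbol $\chi$, where the paper writes $\langle\chi h,e^{-iqt}e_j\rangle=\langle(\chi-1)h,e^{-iqt}e_j\rangle$ and invokes \cref{lem:compact-restriction-general}. You also reach lower semi-Fredholmness of the positive part via adjoints and \cref{lem:peetre-general}, rather than by viewing $[\,D_{\rm pos}\ \ 0\,]$ as a compact perturbation of an onto operator.
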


\begin{proof}
We first consider a common temporal set $\Lambda$.  Since $\Lambda$ is
uniformly separated, the integer parts of its elements have uniformly bounded
multiplicity, so \cref{thm:fredholm-transfer-general} applies.

Choose
\(
   c_{\mathrm e}<d<\pi\min\{1,D_+^-(\Lambda)\},
\)
and let $\mathcal Y_d$ be the localized Hardy-model space appearing in
\cref{thm:fredholm-transfer-general}.  Put
\(
   \widetilde\Lambda
   =
   \Lambda\cup\{-1,-2,-3,\ldots\}.
\)
By \cref{lem:augmented-density-general},
\[
   D^-(\widetilde\Lambda)
   =
   \min\{1,D_+^-(\Lambda)\}
   >
   \frac d\pi.
\]
Beurling's sampling theorem \cite{Beurling89,Young01} therefore implies that
\[
   \{e^{i\mu t}e_j:
     \mu\in\widetilde\Lambda,\ 1\le j\le m\}
\]
is a frame for $L^2(-d,d;\C^m)$.  Its orthogonal projection onto
$\mathcal Y_d$ is consequently a frame for $\mathcal Y_d$.

Let us write this projected frame as the union of
\[
   \mathcal E_d^+(\Lambda)
   =
   \left\{
      P_{\mathcal Y_d}(e^{i\lambda t}e_j):
      \lambda\in\Lambda,\ 1\le j\le m
   \right\}
\]
and
\[
   \mathcal E_d^-
   =
   \left\{
      P_{\mathcal Y_d}(e^{-iqt}e_j):
      q\ge1,\ 1\le j\le m
   \right\}.
\]
The analysis operator of $\mathcal E_d^-$ is compact.  Indeed, if
$u=R_dh=\chi h\in\mathcal Y_d$, then, since every component of
$h\in H^2(\C^m)$ has vanishing negative Fourier coefficients,
\[
\begin{aligned}
   \left\langle
      u,P_{\mathcal Y_d}(e^{-iqt}e_j)
   \right\rangle
   &=
   \left\langle
      \chi h,e^{-iqt}e_j
   \right\rangle                                      \\
   &=
   \left\langle
      (\chi-1)h,e^{-iqt}e_j
   \right\rangle .
\end{aligned}
\]
The map
\[
   h\longmapsto(\chi-1)h
\]
is compact by \cref{lem:compact-restriction-general}, while Fourier analysis
from $L^2(\Torus;\C^m)$ into $\ell^2(\N\times\{1,\ldots,m\})$ is bounded.
Since $R_d^{-1}:\mathcal Y_d\to\mathcal N$ is bounded, the asserted
compactness follows.

Let $D_{\rm pos}$ and $D_{\rm neg}$ be the synthesis operators of
$\mathcal E_d^+(\Lambda)$ and $\mathcal E_d^-$, respectively.  The operator
\(
   [\,D_{\rm pos}\ \ D_{\rm neg}\,]
\)
is onto $\mathcal Y_d$, whereas $D_{\rm neg}$ is compact as the adjoint of the compact analysis operator of $\mathcal E_d^-$.  Hence,
\(
   [\,D_{\rm pos}\ \ 0\,]
\)
is a compact perturbation of an onto operator and is therefore lower
semi-Fredholm.  Its range is exactly $\Ran D_{\rm pos}$, so
$D_{\rm pos}$ itself has closed range of finite codimension.

By \cref{thm:fredholm-transfer-general}, the synthesis operator of
\[
   \{T^\lambda f_j:
     \lambda\in\Lambda,\ 1\le j\le m\}
\]
is likewise lower semi-Fredholm.  This proves the first assertion.
If \eqref{eq:separated-log-density-main} also holds, then
\cref{thm:sectorial-completeness-general} makes the sampled family
complete.  Its synthesis range is therefore both dense and closed,
and hence equals $\Hh$.  Finally, \eqref{eq:separated-density-main}
implies both required inequalities by
$c_{\mathrm e}\le c$ and \cref{lem:uniform-log-general}.

For channel-dependent sets $\Lambda_j$, we choose
\[
   c_{\mathrm e}<d<
   \pi\min_{1\le j\le m}\min\{1,D_+^-(\Lambda_j)\}
\]
and apply Beurling's theorem in the $j$th coordinate to
\(
   \Lambda_j\cup\{-1,-2,\ldots\}.
\)
The negative-frequency analysis operators are compact componentwise, and
the proof of \cref{thm:fredholm-transfer-general} applies without change to
the channel-indexed set
\(
   \bigsqcup_{j=1}^m(\Lambda_j\times\{j\}).
\)
Thus, the corresponding dynamical synthesis operator is lower
semi-Fredholm.  If $L(\Lambda_j)>c/\pi$ for every channel,
\cref{thm:sectorial-completeness-general} gives completeness and
therefore surjectivity.
\end{proof}

\begin{remark}
The compact comparison of \cref{thm:fredholm-transfer-general} extends
to an arbitrary channel-indexed family
$\{T^{s_k}f_{j_k}:k\in I\}$ of nonnegative times, provided
\[
 \sup_{n\in\mathbb Z}\#\{k:\lfloor s_k\rfloor=n\}<\infty.
\]
Indeed, the proof is columnwise and uses only bounded multiplicity;
equivalently, restrict the all-channel comparison to the corresponding
coordinate subspace.  This condition ensures Besselness and the compact
identity.  A frame conclusion follows when this comparison is combined
with lower semi-Fredholmness and completeness, as in
\cref{thm:separated-general}.
\end{remark}

\begin{corollary}
\label{cor:progressions-general}
For every $r>0$ satisfying
\begin{equation}\label{eq:progression-range-general}
   rc<\pi,
\end{equation}
the family
\begin{equation}\label{eq:progression-family-general}
   \{T^{rk}f_j:k\in\Nzero,\ 1\le j\le m\}
\end{equation}
is a frame.  In particular, every $r>0$ is allowed when $c=0$.
\end{corollary}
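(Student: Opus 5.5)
This is a direct application of \cref{thm:separated-general} with $\Lambda=r\Nzero$. We check its two density hypotheses, using the standing frame and sector assumptions \eqref{eq:intro-frame}, \eqref{eq:intro-sector} and \eqref{eq:essential-sector-general}.

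The set $r\Nzero$ is uniformly separated with gap $r$, and its counting function is explicit. Every interval $[x,x+R]$ with $x\ge0$ contains at least $R/r-1$ points of $r\Nzero$ and at most $R/r+1$ of them. Hence
\[
D_+^-(r\Nzero)=\underline d_{\rm nat}(r\Nzero)=1/r.
\]
Hypothesis \eqref{eq:progression-range-general} is exactly $1/r>c/\pi$, which is \eqref{eq:separated-density-main}. The single-condition clause of \cref{thm:separated-general} then gives that \eqref{eq:progression-family-general} is a frame.

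Unpacked, this rests on two facts. First, since $c_{\mathrm e}\le c$, the inequality $D_+^-(r\Nzero)>c_{\mathrm e}/\pi$ holds, so the synthesis operator is Bessel with closed range of finite codimension. Second, \cref{lem:uniform-log-general} gives $L(r\Nzero)\ge 1/r>c/\pi$, so \cref{thm:sectorial-completeness-general} gives completeness. If one wants $L$ directly, the block sum $\sum_{rk\in[x,\rho x]}1/(rk)$ tends to $(\log\rho)/r$, so $L(r\Nzero)=1/r$. The point $0\in r\Nzero$ is ignored in $L$, and this does not affect anything.

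For the final sentence, when $c=0$ the condition $rc<\pi$ reads $0<\pi$, which holds for every $r>0$.

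I do not expect a real obstacle. The only step needing care is the density computation: $D_+^-$ requires the bound to hold uniformly over all starting points $x\ge0$, and the $\pm1$ boundary error is what justifies that uniformity as $R\to\infty$.
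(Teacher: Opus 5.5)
Your proposal is correct and follows the paper's proof: it observes that $r\Nzero$ is uniformly separated with $D_+^-(r\Nzero)=1/r$, so $rc<\pi$ is exactly the single density condition of \cref{thm:separated-general}. Your unpacking of the closed-range and completeness steps, and your direct computation $L(r\Nzero)=1/r$, are accurate elaborations of that same argument.
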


\begin{proof}
The set $r\Nzero$ is uniformly separated and has lower uniform density $1/r$.
Apply \cref{thm:separated-general}.
\end{proof}

\begin{remark}
\label{rem:bdc-comparison-general}
When $c=0$, \cref{cor:natural-density-completeness-general} first gives
completeness under positive lower natural density, while
\cref{thm:separated-general} gives a universal sufficient frame condition for arbitrary invertible finite-generator dynamical frames with
positive spectrum: every separated temporal set of positive lower uniform
density yields a frame.  Theorem 1.3 in \cite{KMBDC26} is sharper for its
special block-diagonal Carleson (BDC) class.  Assuming that the natural density
exists, it characterizes the frame property by positivity and finiteness of
that density and does not require uniform local density.  The BDC result
uses the additional interpolation and Jordan-block structure to replace a
local sampling condition by a global asymptotic rate, while the theorem here
applies to general, possibly nonnormal, finite-multiplicity Hardy models.

For positive selfadjoint operators, Wu
\cite[Theorem~4.4 and Corollary~6.4]{WuSampling26} obtains a universal
natural-density characterization that allows countably many generators
and noninvertible operators: when the natural density exists, it must be
finite and positive, and the temporal set must contain zero.  This
positive-operator result should be distinguished from the case $c=0$
here, which allows nonnormal operators with positive spectrum.

\end{remark}

In the remainder of the section, we discuss the complement property and phase retrieval.

\begin{corollary}
\label{cor:narrow-sector-complement-general}
Under the standing frame assumptions, if $c<\pi/2$, then the integer
dynamical frame has the complement property if and only if the channel-cyclic
condition \eqref{eq:channel-complement-general} holds.  More generally, for
$r>0$ with $rc<\pi/2$, the progression frame
\(
  \{T^{rk}f_j:k\in\Nzero,\ 1\le j\le m\}
\)
has the complement property if and only if
\eqref{eq:channel-complement-general} holds.  In particular, every singly
generated sectorial dynamical frame with $c<\pi/2$ has the complement property.
\end{corollary}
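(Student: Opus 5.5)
This corollary is a direct application of \cref{thm:complement-property-general} with the temporal set $\Lambda=r\Nzero$; the integer case is $r=1$. The standing frame assumptions give invertibility of $T$ and $\sigma(T)\subset\Sigma_c\subset\widehat\Sigma_c$, so the operator hypotheses of that theorem hold. The only thing left to check is the density condition \eqref{eq:cp-density-general}.

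We compute $L(r\Nzero)$ directly from \eqref{eq:log-block-density-general}. For $\rho>1$, the sum $\sum_{\lambda\in r\Nzero\cap[x,\rho x]}1/\lambda$ is a Riemann sum with step $r$ for $\frac1r\int_x^{\rho x}\dd t/t$. It therefore tends to $\frac1r\log\rho$ as $x\to\infty$, and so $L(r\Nzero)=1/r$. Alternatively, \cref{lem:uniform-log-general} gives the lower bound $L(r\Nzero)\ge D_+^-(r\Nzero)=1/r$, and a lower bound is all that is needed. The hypothesis $rc<\pi/2$ is exactly $1/r>2c/\pi$, which is \eqref{eq:cp-density-general}. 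When $c=0$ the inequality holds trivially because $L(r\Nzero)=1/r>0$.

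\cref{thm:complement-property-general} then says that the family $\{T^{rk}f_j\}$ has the complement property if and only if \eqref{eq:channel-complement-general} holds. For $r=1$, the hypothesis $c<\pi/2$ gives $L(\Nzero)=1>2c/\pi$, which settles the integer-frame statement.

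For a single generator, $m=1$, the subsets $J\subset\{1\}$ are $\emptyset$ and $\{1\}$. In either case one of $\Hh_J$, $\Hh_{J^c}$ equals $\Hh_{\{1\}}$. This is the closed span of the integer orbit, which is all of $\Hh$ because that orbit is a frame and hence complete. So \eqref{eq:channel-complement-general} holds automatically, as in \cref{cor:single-complement-general}, and this gives the final sentence of the corollary. The frame property of the progression family itself, which the statement calls "the progression frame", comes from \cref{cor:progressions-general}, since $rc<\pi/2<\pi$.

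No step is a real obstacle. The only point that needs care is computing (or bounding from below) the logarithmic block density of $r\Nzero$.
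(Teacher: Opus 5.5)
Your proposal is correct and follows the paper's proof. Like the paper, you apply \cref{thm:complement-property-general} with $\Lambda=\Nzero$ and $\Lambda=r\Nzero$, using $L(r\Nzero)=1/r>2c/\pi$, invoke \cref{cor:progressions-general} for the frame property of the progression, and note, as in \cref{cor:single-complement-general}, that the channel-cyclic condition holds automatically when $m=1$.
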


\begin{proof}
Apply \cref{thm:complement-property-general} first with
$\Lambda=\Nzero$, for which $L(\Lambda)=1$, and then with
$\Lambda=r\Nzero$, for which $L(\Lambda)=1/r$.  The frame assertion for the
progression follows from \cref{cor:progressions-general}.
\end{proof}

\begin{corollary}
\label{cor:real-phase-retrieval-general}
Let $\Hh_{\mathbb R}$ be a real Hilbert space, let $T$ and
$f_1,\ldots,f_m$ be real, and assume that the complexification of $T$
satisfies the operator and frame hypotheses of
\cref{thm:separated-general}.  Let a common temporal set $\Lambda$ be
uniformly separated and satisfy
\begin{equation}\label{eq:real-phase-density-general}
  D_+^-(\Lambda)>{2c}/{\pi}.
\end{equation}
If the channel-cyclic subspaces satisfy
\eqref{eq:channel-complement-general}, then
\(
  \{T^\lambda f_j:\lambda\in\Lambda,\ 1\le j\le m\}
\)
does phase retrieval for $\Hh_{\mathbb R}$.
For a singly generated frame,
\(
  \{T^{rk}f:k\in\Nzero\}
\)
does real phase retrieval whenever $rc<\pi/2$.
\end{corollary}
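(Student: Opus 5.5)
The plan is to combine the frame conclusion of \cref{thm:separated-general} with the complement property of \cref{thm:complement-property-general}, and then to invoke the standard equivalence between the complement property and real phase retrieval \cite{BCE06}. Throughout, the real frame is handled through the complexification.

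\emph{Step 1 (frame).} Since $c_{\mathrm e}\le c\le 2c$, condition \eqref{eq:real-phase-density-general} implies $D_+^-(\Lambda)>c/\pi$, which is \eqref{eq:separated-density-main}. By \cref{thm:separated-general}, applied to the complexified operator on $\Hh=\Hh_{\mathbb R}+i\Hh_{\mathbb R}$, the family $\{T^\lambda f_j\}$ is a frame for $\Hh$.

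\emph{Step 2 (complement property over $\C$).} \Cref{lem:uniform-log-general} gives $L(\Lambda)\ge D_+^-(\Lambda)>2c/\pi$. Together with the channel condition \eqref{eq:channel-complement-general}, \cref{thm:complement-property-general} then yields the complement property in $\Hh$. Note that the channel-cyclic subspaces of the complexification are the complexifications of the real ones, so the hypothesis may be read in either sense.

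\emph{Step 3 (descent to the real space).} Here I must check that the vectors $T^\lambda f_j$ are real, i.e.\ lie in $\Hh_{\mathbb R}$. Let $J$ denote the conjugation fixing $\Hh_{\mathbb R}$. Because $T$ is real, $JTJ=T$, so $\sigma(T)$ is symmetric under conjugation. The sector is symmetric and the principal logarithm satisfies $\overline{\Log\bar z}=\Log z$, so the Riesz--Dunford integral gives $J(\Log T)J=\Log T$. Hence $JT^\lambda J=T^\lambda$ for real $\lambda$, and so $T^\lambda f_j\in\Hh_{\mathbb R}$.

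Next, complex completeness of a subfamily of real vectors is equivalent to real completeness in $\Hh_{\mathbb R}$: a real vector orthogonal to all of them in $\Hh_{\mathbb R}$ is also orthogonal to them in $\Hh$, and conversely the real and imaginary parts of any complex annihilator are themselves annihilators. Thus the complement property holds in $\Hh_{\mathbb R}$. The finite-dimensional theorem of \cite{BCE06} extends to frames in infinite dimensions, so the complement property gives real phase retrieval: if $|\langle x,\phi\rangle|=|\langle y,\phi\rangle|$ for all frame vectors $\phi$, let $E$ be the set where $\langle x-y,\phi\rangle=0$. On $E^c$ we have $\langle x+y,\phi\rangle=0$, so either $x-y=0$ or $x+y=0$.

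\emph{Step 4 (singly generated case).} Take $m=1$ and $\Lambda=r\Nzero$. This set is uniformly separated with $D_+^-=1/r$, and $rc<\pi/2$ is equivalent to $1/r>2c/\pi$. The channel condition is automatic for one channel, as in \cref{cor:single-complement-general}, so Steps 1--3 apply directly.

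I expect Step 3 to be the main obstacle, specifically the reality of fractional powers via the conjugation symmetry of the principal logarithm. The rest is bookkeeping of density inequalities.
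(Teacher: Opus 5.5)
Your proposal is correct and follows the paper's proof: reality of $T^\lambda f_j$ comes from the conjugation symmetry of the principal logarithm, the frame property from \cref{thm:separated-general}, the complement property from \cref{lem:uniform-log-general} and \cref{thm:complement-property-general}, and the conclusion from the complement-property criterion of \cite{BCE06}. You also spell out two points the paper leaves implicit, namely that completeness of a family of real vectors is the same in $\Hh$ and in $\Hh_{\mathbb R}$, and the direction in which the complement property gives injectivity up to sign, and both are argued correctly.
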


\begin{proof}
The principal logarithm is intrinsic on its domain:
$\Log\overline z=\overline{\Log z}$.  Since the complexification of
$T$ commutes with conjugation, its holomorphic functional calculus gives
the same property for $\Log T$ and hence for every real power $T^s$.
Thus the sampled vectors are real.  By
\cref{thm:separated-general}, the sampled family is a frame.  By
\cref{lem:uniform-log-general,thm:complement-property-general}, it has the
complement property.  Over a real Hilbert space, a frame performs phase
retrieval if and only if it has the complement property; see
\cite{BCE06,CCD16}.
\end{proof}

\begin{remark}
\label{rem:phase-retrieval-history-general}
The complement-property formulation of real phase retrieval originates in
\cite{BCE06}; the infinite-dimensional setting was developed in \cite{CCD16},
and dynamical phase retrieval in \cite{AKT17,AKT20}.  For a real diagonal
Carleson frame $\{D^ng\}_{n\ge0}$ in the sense of \cite{CHPS24}, the results of \cite{Fard26}, combined with
\cite[Theorem~3.10]{AKT20}, give the exact criterion that the eigenvalues of
$D$ be nonzero and have pairwise distinct moduli.  This uses substantially
more diagonal spectral structure than the universal sectorial theorem above.
Over complex Hilbert spaces the complement property is necessary but not
sufficient for phase retrieval.
\end{remark}

\section{Bounded temporal perturbations}

\label{sec:perturbations}

Throughout this section, the integer multiorbit
\eqref{eq:intro-frame} is a frame, $T$ is invertible, and
\eqref{eq:intro-sector} and \eqref{eq:essential-sector-general} hold.
We extract separated residue classes from bounded temporal perturbations.

\begin{theorem}
\label{thm:cluster-extraction-general}
Let $r>0$, $s_0\in\R$, and
\begin{equation}\label{eq:cluster-perturbation-general}
   \lambda_k=s_0+rk+\delta_k,
   \qquad
   \Delta=\sup_{k\ge0}|\delta_k|<\infty.
\end{equation}
Set
\begin{equation}\label{eq:cluster-number-general}
   q=q(\Delta,r)
   :=\left\lfloor\frac{2\Delta}{r}\right\rfloor+1.
\end{equation}
If
\begin{equation}\label{eq:cluster-condition-general}
   qrc<\pi,
\end{equation}
then
\begin{equation}\label{eq:cluster-family-general}
   \{T^{\lambda_k}f_j:k\in\Nzero,\ 1\le j\le m\}
\end{equation}
is a frame.
\end{theorem}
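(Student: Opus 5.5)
The plan is to split the perturbed sequence into $q$ residue classes, $\Lambda^{(p)}=\{\lambda_k:k\equiv p \pmod q\}$ for $0\le p<q$, and to show that one class already gives a frame. The remaining times only add Bessel vectors; they cannot destroy the lower bound. Each class is $\lambda_{p+qi}=s_0+rp+rqi+\delta_{p+qi}$. Consecutive points in a class therefore differ by at least $rq-2\Delta$, and the choice $q=\lfloor 2\Delta/r\rfloor+1$ makes $rq>2\Delta$. So each class is uniformly separated with gap at least $rq-2\Delta>0$.

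We can also control the density of a class. It is a bounded perturbation of the progression $s_0+rp+rq\Nzero$, so $D_+^-(\Lambda^{(p)})=1/(rq)$, and likewise $L(\Lambda^{(p)})=1/(rq)$. Both follow from a direct count: an interval $[x,x+R]$ contains $R/(rq)+O(1)$ points, uniformly in $x$, and \cref{lem:uniform-log-general} passes this to $L$.

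Two technical points need care.

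\emph{Negative times.} If $s_0$ is negative, finitely many $\lambda_k$ may be negative, and \cref{thm:separated-general} is stated for $\Lambda\subset[0,\infty)$. I would discard those finitely many indices from the chosen class. This leaves a subset of $[0,\infty)$ with the same densities.

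\emph{Boundedness of the whole family.} The full family $\{\lambda_k\}$ has integer parts of bounded multiplicity, at most roughly $2\Delta/r+2$ points per unit interval. By the boundedness part of the proof of \cref{thm:fredholm-transfer-general}, the whole family is therefore Bessel. The finitely many negative times contribute finitely many bounded vectors $T^{\lambda}f_j$, since $T$ is invertible, so they do not affect this.

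The hypothesis $qrc<\pi$ reads $D_+^-(\Lambda^{(0)})=1/(rq)>c/\pi$. Hence \cref{thm:separated-general}, through the single condition \eqref{eq:separated-density-main}, shows that $\{T^\lambda f_j:\lambda\in\Lambda^{(0)}\cap[0,\infty)\}$ is a frame. The full family contains this subfamily and is Bessel, so its lower frame bound is at least that of the subfamily. The full family is therefore a frame.

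I expect the main obstacle to be the exact value of the gap and of the density. One must check that $rq-2\Delta>0$ really holds; this is exactly where the floor-plus-one definition of $q$ is used. One must also check that lower uniform density is stable under bounded perturbations, which is elementary: each interval gains or loses at most $O(\Delta/(rq)+1)$ points. The rest is assembly of earlier results.
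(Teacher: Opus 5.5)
Your proposal is correct and follows essentially the paper's argument: split into the $q$ residue classes, use $qr>2\Delta$ to get uniform separation, discard the finitely many negative times, and apply the separated sampling theorem with $D_+^-=1/(qr)>c/\pi$. The only difference is minor. The paper shows that every residue class is a frame and takes the finite union, whereas you use one class for the lower bound and get the Bessel bound from bounded integer-part multiplicity; note that this multiplicity is really about $(1+2\Delta)/r+1$ per unit interval rather than $2\Delta/r+2$, though only its finiteness matters.
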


\begin{proof}
For $\ell=0,\ldots,q-1$, consider the residue-class times
\[
   \Lambda_\ell
   =\{\lambda_{qn+\ell}:n\in\Nzero\}.
\]
Since $q>2\Delta/r$,
\[
\begin{aligned}
 \lambda_{q(n+1)+\ell}-\lambda_{qn+\ell}
 &=qr+\delta_{q(n+1)+\ell}-\delta_{qn+\ell}\\
 &\ge qr-2\Delta>0.
\end{aligned}
\]
Thus, each $\Lambda_\ell$ is uniformly separated.  Because the deviations are
bounded, its counting function differs uniformly by $O(1)$ from that of an
arithmetic progression of step $qr$.  Deleting its finitely many negative
points therefore leaves a set $\Lambda_\ell^+\subset[0,\infty)$ with
\(
   D_+^-(\Lambda_\ell^+)=1/{qr}> c/\pi.
\)
By \cref{thm:separated-general},
\(
   \{T^\lambda f_j:\lambda\in\Lambda_\ell^+,\ 1\le j\le m\}
\)
is a frame.  Adding the finitely many omitted negative-time vectors preserves
the frame property.  Hence, every residue-class family is a frame, and their
finite union \eqref{eq:cluster-family-general} is a frame.
\end{proof}

\begin{remark}
\label{rem:kadec-avdonin-general}
The above cluster-extraction theorem contains the qualitative Kadec $1/4$ regime.
Indeed, if $rc<\pi$ and $c\Delta<\pi/4$, then either $\Delta<r/2$ and $q=1$,
or
\[
   qrc\le 2c\Delta+cr\le4c\Delta<\pi;
\]
hence \cref{thm:cluster-extraction-general} applies.  The same pointwise
threshold occurs for sampled isometric group orbits in \cite{KM25}.

For separated bounded perturbations, Avdonin's theorem gives a classical
block-average formulation: after normalizing
$\lambda_k=s_0+rk+\delta_k$ by a constant offset, a bounded two-sided extension
of $(\delta_k/r)$ whose frequencies remain separated and whose averages over
some fixed block length stay strictly below $1/4$ generates a Riesz basis of
exponentials; see \cite{Avdonin74,Young01}.  In the present paper the resulting
dynamical-frame conclusion already follows from
\cref{thm:separated-general}, since a separated bounded perturbation has density
$1/r$.  The Avdonin formulation nevertheless exhibits cancellation invisible
to pointwise Kadec control.  For example,
$\delta_k=(-1)^k\alpha r$ with $1/4<\alpha<1/2$ has vanishing two-term
averages and remains separated, although it may lie outside the Kadec radius.
\end{remark}

\section{Generalized divided differences and confluent sampling}
\label{sec:gdd}

Unless a statement explicitly assumes only completeness, the frame and
operator hypotheses of \cref{sec:perturbations} are in force throughout
this section.

Classical divided-difference theory provides the appropriate replacement for
pairwise separation when temporal nodes occur in uniformly bounded clusters.
We use the formulation of Avdonin--Ivanov \cite{AI95,AI02,AI09}
and Avdonin--Moran \cite{AM01}; see also the weakened-gap Ingham--Beurling theorems of
Baiocchi--Komornik--Loreti \cite{BKL02} and their vector-valued extension
\cite{BKM09}.

\begin{definition}
\label{def:weak-gap-general}
Let $\Lambda=(\lambda_k)_{k\ge0}$ be a nondecreasing real sequence; repetitions
are allowed and are counted with multiplicity.  We say that $\Lambda$ satisfies
a \emph{weak $M$-gap condition} if there are $M\in\N$ and $\gamma>0$ such that
\begin{equation}\label{eq:weak-gap-general}
   \lambda_{k+M}-\lambda_k\ge M\gamma,
   \qquad k\ge0.
\end{equation}
Fix $0<\eta<\gamma$ and, without loss of generality, choose $\eta$ not
equal to any positive interpoint gap of the sequence.  A maximal consecutive
block
\[
   C_p=(\lambda_{p,1},\ldots,\lambda_{p,m_p})
\]
is called an $\eta$-close chain if every internal gap is less than $\eta$ and the gaps to
neighboring blocks, when present, are bigger than $\eta$.
\end{definition}

Condition \eqref{eq:weak-gap-general} implies $m_p\le M$: otherwise $M$
consecutive internal gaps would have sum $<M\eta<M\gamma$.  It also makes the
connection with the classical Avdonin--Moran clustering explicit.  Each residue
subsequence $(\lambda_{a+kM})_{k\ge0}$ is uniformly separated with gap at least
$M\gamma$.  Thus $\Lambda$ is a union of at most $M$ uniformly discrete
subsequences; a sequence with this property is called \emph{relatively
uniformly discrete}.  If the disc radius in the construction of \cite{AM01} is chosen
as $\eta/2$, then $\eta/2<\gamma/2$, which lies below the admissible radius
coming from this decomposition into $M$ separated subsequences, and the connected components are
exactly the maximal $\eta$-close chains defined above.  Conversely, for a real
sequence relative uniform discreteness is equivalent to a weak $M$-gap
condition for some $M$; compare \cite[Lemma~3]{AI02}.  In this subsection the
counting functions and densities count multiplicities.

\begin{definition}
\label{def:gdd-general}
Let $X$ be a complex Banach space and let $F:\C\to X$ be entire.  For real
nodes $\mu_1,\ldots,\mu_q$, not necessarily distinct, define
\begin{equation}\label{eq:hermite-genocchi-general}
 [\mu_1,\ldots,\mu_q]F
 =\int_{\Delta_{q-1}}
   F^{(q-1)}\!\left(\sum_{\nu=1}^q t_\nu\mu_\nu\right)\,d\sigma(t),
\end{equation}
where
\[
 \Delta_{q-1}=\{t_\nu\ge0:\ \sum_{\nu=1}^q t_\nu=1\}
\]
and $d\sigma$ denotes the affine simplex measure normalized so that
$\sigma(\Delta_{q-1})=1/(q-1)!$ (equivalently, the measure arising from the
standard nested-integral form of the Hermite--Genocchi formula).  This is the Hermite--Genocchi extension of the ordinary
divided difference.  If the nodes are distinct, then
\begin{equation}\label{eq:gdd-distinct-general}
 [\mu_1,\ldots,\mu_q]F
 =\sum_{\nu=1}^q
   \frac{F(\mu_\nu)}{\prod_{\ell\ne\nu}(\mu_\nu-\mu_\ell)},
\end{equation}
and if all nodes coincide at $\mu$, then
\begin{equation}\label{eq:gdd-confluent-general}
 [\mu,\ldots,\mu]F=\frac{F^{(q-1)}(\mu)}{(q-1)!}.
\end{equation}
For $F_t(\zeta)=e^{i\zeta t}$ we write
\[
   E[\mu_1,\ldots,\mu_q](t)
   =[\mu_1,\ldots,\mu_q]F_t.
\]
\end{definition}

The formula \eqref{eq:hermite-genocchi-general} is valid as a Bochner integral
and shows immediately that GDDs depend continuously on colliding nodes.  In
particular,
\begin{equation}\label{eq:exp-gdd-collision-general}
   E[\mu,\ldots,\mu](t)
   =\frac{(it)^{q-1}}{(q-1)!}e^{i\mu t}.
\end{equation}
For the operator-valued entire function $F_j(\zeta)=T^\zeta f_j$ it gives
\begin{equation}\label{eq:dynamical-gdd-collision-general}
 [\mu,\ldots,\mu]F_j
 =\frac{1}{(q-1)!}T^\mu(\Log T)^{q-1}f_j.
\end{equation}

For an $\eta$-close-chain decomposition $(C_p)$ of $\Lambda$, define the
\emph{dynamical GDD family}
\begin{equation}\label{eq:dynamical-gdd-family-general}
 \mathcal G(T,\bm f;\Lambda)
 =\left\{
 [\lambda_{p,1},\ldots,\lambda_{p,q}]
       (\zeta\mapsto T^\zeta f_j):
 p,\ 1\le q\le m_p,\ 1\le j\le m
 \right\}.
\end{equation}
Different enumerations inside clusters of bounded size and diameter
give families related by uniformly bounded, uniformly invertible
coefficient transformations.  Our convention uses the temporal order;
the dynamical vectors themselves need not be linearly independent.

\begin{remark}
\label{rem:gdd-channel-cyclic-general}
Let $J\subset\{1,\ldots,m\}$.  Every finite dynamical GDD formed from real
temporal nodes in the channels $j\in J$ belongs to $\Hh_J$.  Indeed, if
$x\perp\Hh_J$, then the proof of
\cref{prop:fractional-channel-cyclic-general} shows that
$F_{x,j}(z)=\langle T^zf_j,x\rangle$ vanishes identically for every $j\in J$.
All divided differences of $F_{x,j}$ therefore vanish, which proves the
claim by orthogonality.
\end{remark}

Recall that $L_{\rm mult}(\Lambda)$ is the logarithmic block density
\eqref{eq:log-block-density-general} with multiplicities counted.
The following theorem extends \cref{thm:sectorial-completeness-general}
to nested divided differences.

\begin{theorem}
\label{thm:gdd-completeness-general}
Let $T$ be invertible, assume
$\sigma(T)\subset\widehat\Sigma_c$, and suppose that the integer multiorbit
$\{T^nf_j\}_{n,j}$ is complete.  Let $\Lambda$ be a locally finite
nondecreasing temporal multisequence in $[0,\infty)$, partitioned into finite consecutive
clusters $C_p$, with all copies of a repeated node placed in the same cluster.
If
\begin{equation}\label{eq:gdd-log-completeness-general}
  L_{\rm mult}(\Lambda)> c/\pi,
\end{equation}
then the associated nested dynamical GDD family is complete.
\end{theorem}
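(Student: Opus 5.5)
Our plan is to reduce to the scalar uniqueness statement of \cref{lem:carlson-rubel-general}, as in the proof of \cref{thm:sectorial-completeness-general}. Let $h\in\Hh$ be orthogonal to the nested dynamical GDD family and, for each channel $j$, set $F_{h,j}(z)=\langle T^zf_j,h\rangle$. The growth estimate from that proof shows that $F_{h,j}$ is entire of exponential type and satisfies $|F_{h,j}(iy)|\le C_\varepsilon e^{(c+\varepsilon)|y|}$. Divided differences commute with bounded linear functionals, since the Hermite--Genocchi integral \eqref{eq:hermite-genocchi-general} is a Bochner integral. Hence orthogonality to the family gives
\[
  [\lambda_{p,1},\ldots,\lambda_{p,q}]F_{h,j}=0,
  \qquad 1\le q\le m_p,
\]
for every cluster $C_p$ and channel $j$.

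The key step is to convert the vanishing of the nested divided differences within one cluster into a statement about the zero divisor of $F_{h,j}$. We argue by induction on $q$ using the Newton interpolation formula: the Hermite interpolant of $F$ at the nodes $\lambda_{p,1},\ldots,\lambda_{p,q}$ is
\[
  \sum_{r=1}^q[\lambda_{p,1},\ldots,\lambda_{p,r}]F\prod_{\ell<r}(z-\lambda_{p,\ell}).
\]
It follows that $F(\lambda_{p,1})=0$, and, successively, that $F$ vanishes at each node $\mu$ of the cluster to order equal to the multiplicity of $\mu$ in the cluster. Equivalently, $F(z)/\prod_{r=1}^{m_p}(z-\lambda_{p,r})$ is entire. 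The hypothesis that all copies of a repeated node lie in the same cluster is exactly what guarantees that the full multiplicity of each node appears in a single cluster. Otherwise the nested differences in a later cluster would only encode the remaining copies relative to different base points, and the multiplicity count would not be justified. Distinct clusters contain distinct nodes, so the multiplicities add, and the zero divisor of $F_{h,j}$ contains $\Lambda$ as a multisequence. A possible zero at the origin is irrelevant, because $L_{\rm mult}$ ignores finitely many points.

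With this in hand, \cref{lem:carlson-rubel-general} (multiset version, $a=c$) and \eqref{eq:gdd-log-completeness-general} force $F_{h,j}\equiv0$ for every $j$. In particular $\langle T^nf_j,h\rangle=0$ for all $n\in\Nzero$ and all $j$, and completeness of the integer multiorbit gives $h=0$.

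We expect the main obstacle to be the divisor step: verifying carefully that vanishing of the nested GDDs $[\lambda_{p,1},\ldots,\lambda_{p,q}]F$, $q\le m_p$, is equivalent to $F$ vanishing with the correct multiplicities at the cluster nodes. We would prove this through the Newton--Hermite remainder identity
\[
  F(z)=\sum_{r=1}^{m_p}[\lambda_{p,1},\ldots,\lambda_{p,r}]F\prod_{\ell<r}(z-\lambda_{p,\ell})
  +[\lambda_{p,1},\ldots,\lambda_{p,m_p},z]F\prod_{\ell\le m_p}(z-\lambda_{p,\ell}),
\]
which holds for confluent nodes by continuity of \eqref{eq:hermite-genocchi-general} in the nodes. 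Once all the Newton coefficients vanish, the remainder term exhibits the required factor directly.
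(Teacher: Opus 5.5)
Your proposal is correct and follows the same route as the paper's proof. Orthogonality becomes vanishing of the nested divided differences of $F_{h,j}$, Newton--Hermite interpolation then puts each cluster into the zero divisor with multiplicity (read your ``order equal to'' as ``order at least''), and the multiplicity-counted Carlson--Rubel lemma together with completeness of the integer multiorbit gives $h=0$. Your remainder-form Newton identity and your remark on why copies of a node must not be split across clusters simply make explicit what the paper compresses into the phrase ``Newton--Hermite interpolation.''
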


\begin{proof}
Let $h$ be orthogonal to all of the GDD vectors and put
\[
  F_{h,j}(z)=\langle T^zf_j,h\rangle.
\]
The growth argument in the proof of
\cref{thm:sectorial-completeness-general} shows that each $F_{h,j}$ is entire
of exponential type and of imaginary-axis type at most $c$.  On a cluster
$C_p=(\lambda_{p,1},\ldots,\lambda_{p,m_p})$, orthogonality gives
\[
 [\lambda_{p,1}]F_{h,j}
 =[\lambda_{p,1},\lambda_{p,2}]F_{h,j}
 =\cdots
 =[\lambda_{p,1},\ldots,\lambda_{p,m_p}]F_{h,j}=0.
\]
Newton--Hermite interpolation then shows that the zero divisor of $F_{h,j}$
contains all nodes of $C_p$ with their prescribed multiplicities.  Therefore
its positive real zero divisor contains the positive part of $\Lambda$.
Rubel's theorem counts multiplicity, so
\eqref{eq:gdd-log-completeness-general} forces $F_{h,j}\equiv0$ for every
$j$.  Completeness of the integer multiorbit gives $h=0$.
\end{proof}

For each cluster and channel, write
\[
  \mathcal B_{p,j}
  =\left\{
  [\lambda_{p,1},\ldots,\lambda_{p,q}]
  (\zeta\mapsto T^\zeta f_j):1\le q\le m_p
  \right\}.
\]
We say that the GDD family has the \emph{cluster-block complement property}
if one side of every partition that keeps each block $\mathcal B_{p,j}$ intact
is complete.

\begin{corollary}
\label{cor:gdd-block-complement-general}
Under the hypotheses of \cref{thm:gdd-completeness-general}, assume
\(
  L_{\rm mult}(\Lambda)>{2c}/{\pi}.
\)
Then the GDD family has the cluster-block complement property if and only if
the channel-cyclic condition \eqref{eq:channel-complement-general} holds.  In
particular, it has the cluster-block complement property in the singly
generated case.
\end{corollary}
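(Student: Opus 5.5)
We follow the proof of \cref{thm:complement-property-general}, replacing pointwise vanishing by vanishing of divided differences along clusters. The statement is an equivalence, so there are two directions.

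\emph{Necessity.} Suppose \eqref{eq:channel-complement-general} fails for some $J$. Partition the GDD family by channel, putting every block $\mathcal B_{p,j}$ with $j\in J$ on one side and those with $j\in J^c$ on the other. Both sides keep each block intact. By \cref{rem:gdd-channel-cyclic-general}, the two spans lie in the proper subspaces $\Hh_J$ and $\Hh_{J^c}$. Hence neither side is complete, and the cluster-block complement property fails.

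\emph{Sufficiency.} Suppose a block-respecting partition makes both sides incomplete. Choose nonzero $x$ orthogonal to the first side and nonzero $y$ orthogonal to the second. Set $F_{x,j}(z)=\langle T^zf_j,x\rangle$, define $F_{y,j}$ analogously, and put $G_j=F_{x,j}F_{y,j}$.

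Fix a cluster $C_p$ and a channel $j$. The whole block $\mathcal B_{p,j}$ lies on one side. Consequently either all nested divided differences $[\lambda_{p,1},\ldots,\lambda_{p,q}]F_{x,j}$, $1\le q\le m_p$, vanish, or all those of $F_{y,j}$ vanish. By the Newton--Hermite interpolation argument from the proof of \cref{thm:gdd-completeness-general}, the corresponding factor then vanishes on every node of $C_p$ with its full prescribed multiplicity. Since $G_j$ is a product, its zero divisor contains each node of $C_p$ with at least that multiplicity. This holds for every cluster, so the positive real zero divisor of $G_j$ contains the positive part of $\Lambda$, multiplicities counted.

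This is the step needing care, and it is exactly why the block-intact hypothesis is imposed. If a block were split between the sides, we would only know that some nested divided differences of $F_{x,j}$ vanish and others of $F_{y,j}$ vanish. That does not translate into a zero multiplicity for the product.

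With the zero divisor controlled, the growth estimate from \cref{thm:sectorial-completeness-general} shows that $G_j$ is entire of exponential type with imaginary-axis type at most $2c$. Since $L_{\rm mult}(\Lambda)>2c/\pi$, \cref{lem:carlson-rubel-general} (multiplicity version) gives $G_j\equiv0$. The ring of entire functions has no zero divisors, so for each $j$ either $F_{x,j}\equiv0$ or $F_{y,j}\equiv0$.

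Let $J=\{j:F_{x,j}\equiv0\}$. Then $x\perp\Hh_J$ and $y\perp\Hh_{J^c}$. Because $x$ and $y$ are nonzero, both subspaces are proper, contradicting \eqref{eq:channel-complement-general}.

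\emph{Singly generated case.} When $m=1$, the subsets are $J=\emptyset$ and $J=\{1\}$, and $\Hh_{\{1\}}=\Hh$ by completeness of the integer orbit. So the channel-cyclic condition holds automatically, and the GDD family has the cluster-block complement property.
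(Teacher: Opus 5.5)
Your proof is correct and follows essentially the same route as the paper. Necessity comes from partitioning the blocks by channel and invoking \cref{rem:gdd-channel-cyclic-general}; sufficiency comes from using Newton--Hermite interpolation to show that each $G_j=F_{x,j}F_{y,j}$ vanishes on $\Lambda$ with multiplicities, then applying the multiplicity-counted Carlson--Rubel bound at type $2c$ and the no-zero-divisor argument, with your singly generated case and block-intactness discussion matching the paper's.
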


\begin{proof}
Necessity follows by partitioning all GDD blocks according to their channel
and applying \cref{rem:gdd-channel-cyclic-general}.  For sufficiency, suppose a block-respecting partition has two incomplete
sides, and choose nonzero vectors $x$ and $y$ orthogonal to them.  For each
channel, the product
\[
  G_j(z)=\langle T^zf_j,x\rangle\langle T^zf_j,y\rangle
\]
has every temporal cluster in its zero divisor with the prescribed
multiplicities: the full nested GDD block annihilates one of the two factors.
The product has imaginary-axis type at most $2c$, so Rubel's theorem gives
$G_j\equiv0$.  The no-zero-divisor argument from
\cref{thm:complement-property-general} then contradicts
\eqref{eq:channel-complement-general}.
\end{proof}

\begin{remark}
The block qualification is essential.  An arbitrary partition may separate
value and derivative data inside one cluster; the resulting conditions do not
force the full cluster multiplicity into the zero divisor of the product of
two coefficient functions.  Consequently, the corollary does not by itself
assert the ordinary complement property or real phase retrieval for a GDD
frame.
\end{remark}

\begin{lemma}
\label{lem:exponential-gdd-frame-general}
Let $\Omega=(\omega_k)_{k\in\Z}$ be a real relatively uniformly discrete
sequence, with multiplicities, and let its standard close-chain GDD family be
formed as in \cref{def:gdd-general}.  If $I$ is a bounded interval and
\begin{equation}\label{eq:gdd-density-classical-general}
   D^-(\Omega)>{|I|}/{2\pi},
\end{equation}
then the exponential GDD family is a frame for $L^2(I)$.
\end{lemma}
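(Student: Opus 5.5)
The plan is to reduce to the separated case by a residue-class extraction.

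\emph{Step 1: decomposition.} Write $\Omega$, via the weak $M$-gap characterization of relative uniform discreteness noted after \cref{def:weak-gap-general}, as a union of at most $M$ uniformly separated subsequences. The GDD family then becomes a bounded block-triangular recombination of exponentials with nearby frequencies. Concretely, on each close chain $C_p$ of length $m_p\le M$ and diameter $<M\eta$, the GDDs $E[\omega_{p,1},\ldots,\omega_{p,q}]$, $1\le q\le m_p$, span the same space as the confluent system $t^{\ell}e^{i\omega_{p,1}t}$, $0\le\ell<m_p$, up to small errors. When the chain's nodes are distinct, they also span the exponentials $e^{i\omega_{p,\nu}t}$ themselves, via \eqref{eq:gdd-distinct-general}. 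The Hermite--Genocchi formula \eqref{eq:hermite-genocchi-general} shows that the upper bounds are uniform on $L^2(I)$, since $|t|$ is bounded on $I$ and $m_p\le M$. The Bessel property of the GDD family therefore follows from the Bessel property of the at most $M$ separated subsequences of exponentials.

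\emph{Step 2: lower bound via the classical theorem.} For the lower frame bound I would not build it from scratch. Instead I would invoke the Avdonin--Ivanov and Avdonin--Moran theory for relatively uniformly discrete sequences \cite{AI02,AM01}, in its frame form. Given \eqref{eq:gdd-density-classical-general}, one can pick $\varepsilon>0$ with $D^-(\Omega)>(|I|+\varepsilon)/2\pi$. Their result, the Beurling-type sampling theorem for divided differences, states that the GDD family built from $\eta$-chains with small $\eta$ is a frame for $L^2$ of an interval of length less than $2\pi D^-(\Omega)$. The only bookkeeping needed is to match their disc-component construction with our $\eta$-close chains. This was already done in the remark following \cref{def:weak-gap-general}: with radius $\eta/2$ the connected components are exactly our chains. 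Our nested enumeration differs from theirs only by uniformly bounded, uniformly invertible coefficient transformations, and such transformations preserve the frame property.

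\emph{Step 3: self-contained alternative to the citation.} If I wanted to avoid citing the frame form directly, I would argue through Paley--Wiener duality. The frame property is equivalent to the sampling inequality
\[
\|F\|^2_{L^2(\R)}\lesssim\sum_p\sum_{q}|[\omega_{p,1},\ldots,\omega_{p,q}]F|^2
\]
for $F\in PW_{|I|/2}$, after translating $I$ to be symmetric. I would prove this by Beurling's weak-limit method. Suppose the inequality fails for some sequence of normalized $F_n$. Translating and passing to weak limits of the sets $\Omega-x_n$, the limit multiset $\Omega^*$ still has $D^-\ge D^-(\Omega)$ and is still relatively uniformly discrete. The limit function $F^*$ is nonzero and vanishes on $\Omega^*$ with multiplicities, since GDDs are continuous in colliding nodes. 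This contradicts the uniqueness theorem for $PW$ at density above the type, counting multiplicity, which is a Carlson-type argument similar to \cref{lem:carlson-rubel-general}.

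\emph{Main obstacle.} The hard step is nondegeneracy of the weak limit. Nodes in different chains may merge in the limit, and one must show that the resulting limit GDD data still control $F^*$. Merging chains only strengthens the vanishing conditions, so this should go through. The real difficulty is ruling out that $F_n$ concentrates where the data vanish, that is, proving $F^*\ne0$. This requires the usual Plancherel--P\'olya localization step, which I would carry out with a local sampling estimate in which the GDD norms on each window dominate local values.
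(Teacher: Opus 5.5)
Your Steps 1 and 2 follow the paper's route: a Bessel bound for the full GDD family from the Hermite--Genocchi formula, and the lower bound imported from Avdonin--Moran. The paper reads their Theorem~3(i) as giving a Riesz-basis \emph{subfamily} on $(0,T_I)$ when $T_I<2\pi D^-(\Omega)$. A Bessel family containing a Riesz basis is a frame, so it is your Step~1 that upgrades the citation to a frame statement; you should say this rather than attribute a ready-made ``frame form'' to the references.

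There is one missing step. The classical result is on $(0,T_I)$, but the lemma is for an arbitrary bounded $I=(a,a+T_I)$, and it is later applied on $(-d,d)$. For plain exponentials, translation is just a unimodular diagonal rescaling. For GDDs the factor $e^{ia\zeta}$ depends on the node, so translation mixes the data inside each chain. The paper handles this with the Leibniz rule for divided differences:
\[
 U_aE[\omega_{p,1},\ldots,\omega_{p,q}]
 =\sum_{k=1}^q E[\omega_{p,1},\ldots,\omega_{p,k}]\,
   [\omega_{p,k},\ldots,\omega_{p,q}](\zeta\mapsto e^{ia\zeta}).
\]
This is a lower-triangular block with unimodular diagonal $e^{ia\omega_{p,q}}$. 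By Hermite--Genocchi, which remains valid at repeated nodes, its off-diagonal entries are bounded by $|a|^{s-1}/(s-1)!$. Translation by $-a$ gives the inverse. Since chain sizes are at most $M$, these blocks are uniformly bounded and uniformly invertible, so the frame property transfers to $I$. Your phrase ``after translating $I$ to be symmetric'' in Step~3 silently relies on this same fact.

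Your justification of the Bessel bound also needs correcting. A GDD is not a bounded recombination of the node exponentials. For example, $E[\omega_1,\omega_2]=(e^{i\omega_1t}-e^{i\omega_2t})/(\omega_1-\omega_2)$ has coefficients of size $|\omega_1-\omega_2|^{-1}$, which blow up as nodes collide. So the bound does not follow from the Bessel property of the $M$ separated subsequences of $\Omega$. Nor is the comparison with $t^\ell e^{i\omega_{p,1}t}$ a small perturbation, since chain diameters are only bounded by $(M-1)\eta$.

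The correct mechanism is the one built into \eqref{eq:hermite-genocchi-general}. Fix an order $q$ and a simplex point $s$. Then $\xi_p(s)=\sum_\nu s_\nu\omega_{p,\nu}$ lies in the convex hull of $C_p$. Hulls of distinct chains are more than $\eta$ apart, so $(\xi_p(s))_p$ is $\eta$-separated uniformly in $s$. The upper Ingham estimate is therefore uniform in $s$, multiplication by $(it)^{q-1}$ is bounded on the bounded interval $I$, and Minkowski's integral inequality gives the Bessel bound order by order.

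Step~3 is not needed. As written it is also not a proof: nondegeneracy of the weak limit is the whole content of Beurling's theorem, and you leave it open. Your worry about different chains merging is unfounded, since inter-chain gaps exceed $\eta$ and remain at least $\eta$ in the limit.
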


\begin{proof}
Write $T_I=|I|$.  Avdonin and Moran
\cite[Theorem~3(i)]{AM01} show that on $(0,T_I)$ one can select from the
standard GDD family a subfamily that is a Riesz basis whenever
$T_I<2\pi D^-(\Omega)$.  We first show that the \emph{full} standard GDD
family is a frame on $(0,T_I)$.

Its cluster cardinalities are uniformly bounded.  For a fixed order $q$, the
Hermite--Genocchi representation gives
\[
 E[\omega_{p,1},\ldots,\omega_{p,q}](t)
 =\int_{\Delta_{q-1}}
    (it)^{q-1}e^{i\xi_p(s)t}\,d\sigma(s),
 \qquad
 \xi_p(s)=\sum_{\nu=1}^q s_\nu\omega_{p,\nu}.
\]
Distinct close chains have uniformly separated convex hulls.  Hence, for every
fixed simplex parameter $s$, the real frequencies $(\xi_p(s))_p$ are uniformly
separated, with a separation constant independent of $s$.  The standard upper
Ingham estimate on $(0,T_I)$ therefore gives
\[
 \left\|\sum_p a_p e^{i\xi_p(s)t}\right\|_{L^2(0,T_I)}^2
 \le C_I\sum_p|a_p|^2
\]
uniformly in $s$.  Since $(0,T_I)$ is bounded, multiplication by
$(it)^{q-1}$ is bounded on $L^2(0,T_I)$.  Minkowski's integral inequality then
yields the same Bessel estimate for the order $q-1$ GDDs.  Summing over the
finitely many orders proves that the full GDD family is Bessel.  Since it
contains the Riesz basis selected in \cite[Theorem~3(i)]{AM01}, it is a frame
for $L^2(0,T_I)$.

It remains to transfer this full-frame conclusion to an arbitrary interval of
the same length.  Let $I=(a,a+T_I)$ and let
\[
  (U_af)(t)=f(t+a),\qquad 0<t<T_I,
\]
be the translation unitary from $L^2(I)$ onto $L^2(0,T_I)$.  Inside each
cluster, the divided-difference product rule gives the explicit triangular
identity
\begin{align*}
 U_aE[\omega_{p,1},\ldots,\omega_{p,q}](t)
 &= [\omega_{p,1},\ldots,\omega_{p,q}]
      \bigl(\zeta\mapsto e^{i\zeta t}e^{ia\zeta}\bigr)\\
 &=\sum_{k=1}^q
    E[\omega_{p,1},\ldots,\omega_{p,k}](t)
    [\omega_{p,k},\ldots,\omega_{p,q}]
       (\zeta\mapsto e^{ia\zeta}).
\end{align*}
Thus translation is a triangular linear combination of the first $q$
ordered GDDs in the same cluster on $(0,T_I)$, with unimodular diagonal
coefficient $e^{ia\omega_{p,q}}$.  More explicitly, the Hermite--Genocchi
formula gives, for every suffix of length $s$,
\[
 \left|[\mu_1,\ldots,\mu_s]
       (\zeta\mapsto e^{ia\zeta})\right|
 \le \frac{|a|^{s-1}}{(s-1)!},
\]
including at repeated nodes.  Since the cluster cardinalities are uniformly
bounded, the block matrices are uniformly bounded independently of the
absolute locations of the clusters.  Translation by $-a$ produces their
exact inverses and satisfies the same type of estimate.  Thus $U_a$ carries
the full standard GDD family
on $I$ to a uniformly bounded and uniformly invertible blockwise transform of
the full standard GDD frame on $(0,T_I)$.  Such a blockwise transform preserves
the frame property, proving the assertion on $I$.
\end{proof}

We next show that the compact exponential reduction from
\cref{lem:compact-localization-general} is stable under arbitrary collisions of
bounded multiplicity.

\begin{lemma}
\label{lem:gdd-compact-localization-general}
Assume that $\Lambda=(\lambda_k)_{k\ge0}\subset[0,\infty)$ satisfies a weak $M$-gap condition
and let $(C_p)$ be an $\eta$-close-chain decomposition with
$0<\eta<\min\{1,\gamma\}$.  For
$C_p=(\lambda_{p,1},\ldots,\lambda_{p,m_p})$, set
\[
   n_p=\lfloor\lambda_{p,1}\rfloor,
   \qquad
   \alpha_{p,q}=\lambda_{p,q}-n_p.
\]
Then the offsets $\alpha_{p,q}$ lie in one fixed compact interval and the map
$p\mapsto n_p$ has uniformly bounded multiplicity.

For $1\le q\le m_p$ and $1\le j\le m$, let
\begin{align}
 x_{p,q,j}
 &= [\lambda_{p,1},\ldots,\lambda_{p,q}]
      (\zeta\mapsto A^\zeta g_j),\label{eq:gdd-x-general}\\
 w_{p,q,j}
 &= [\alpha_{p,1},\ldots,\alpha_{p,q}]
    \bigl(\alpha\mapsto PT_{q_\alpha}(z^{n_p}e_j)\bigr),
    \label{eq:gdd-w-general}
\end{align}
where $q_\alpha$ is given by \eqref{eq:q-alpha-general}.
The synthesis operators of $(x_{p,q,j})$ and $(w_{p,q,j})$ differ by a
compact operator.  Moreover,
\begin{equation}\label{eq:gdd-localized-vector-general}
   w_{p,q,j}
   =PP_+\!\left(
      \chi(e^{it})
      E[\lambda_{p,1},\ldots,\lambda_{p,q}](t)e_j
   \right).
\end{equation}
\end{lemma}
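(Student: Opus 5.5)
The plan is to mimic the proof of \cref{lem:compact-localization-general}, replacing point evaluation of the entire compact-valued map $\alpha\mapsto C_\alpha$ by divided differences. First the combinatorial claims. Each chain has at most $M$ nodes and internal gaps below $\eta<1$, so its diameter is less than $M$. Hence $0\le\alpha_{p,q}<1+M$, and all offsets lie in the fixed interval $[0,M+1]$. Distinct chains $C_p$ have distinct first nodes $\lambda_{p,1}$. The chains with $n_p=n$ all start in $[n,n+1)$, and such starting points are separated by more than $\eta$. There are therefore at most $1/\eta+1$ of them, which gives the bounded multiplicity of $p\mapsto n_p$.

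Next, I would write $A^{\zeta}g_j$ with $\zeta=n_p+\alpha$. Since $A^{n_p+\alpha}g_j=A^{\alpha}A^{n_p}g_j=A^\alpha P(z^{n_p}e_j)$ by \eqref{eq:basic-columns}, divided differences in $\zeta$ at the nodes $\lambda_{p,\cdot}$ coincide with divided differences in $\alpha$ at the shifted nodes $\alpha_{p,\cdot}$. This holds because translation of nodes commutes with the Hermite--Genocchi formula \eqref{eq:hermite-genocchi-general}. Therefore
\[
 x_{p,q,j}-w_{p,q,j}=[\alpha_{p,1},\ldots,\alpha_{p,q}]\bigl(\alpha\mapsto C_\alpha\bigr)(z^{n_p}e_j),
\]
with $C_\alpha$ from \eqref{eq:C-alpha-general}. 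Now expand $C_\alpha=\sum_\ell C_\ell(\alpha-\alpha_0)^\ell$ about the centre $\alpha_0=(M+1)/2$ of the offset interval. Since $C$ is entire, Cauchy's estimate on a circle of radius $R>M+1$ gives $\sum_\ell \rho^{\ell}\norm{C_\ell}<\infty$ for some $\rho>(M+1)/2$, with each $C_\ell$ compact. The divided difference of $(\alpha-\alpha_0)^\ell$ of order $q-1$ at nodes in $[0,M+1]$ is bounded by $\binom{\ell}{q-1}((M+1)/2)^{\ell-q+1}$. This follows from \eqref{eq:hermite-genocchi-general}, since the $(q-1)$st derivative is integrated against a measure of mass $1/(q-1)!$. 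This quantity is $O(\ell^{M}\theta^\ell)$ with $\theta<\rho$, so it is still summable against $\norm{C_\ell}$. For fixed $\ell$ and $q$, let $M_{\ell,q}$ be the diagonal operator with these divided-difference entries, and let $J$ be the map sending the coordinate $(p,q,j)$ to $z^{n_p}e_j$. Then $J$ is bounded by the multiplicity bound, since $q\le M$. The synthesis difference is the norm-convergent series $\sum_\ell C_\ell J M_\ell$ of compact operators, and is therefore compact.

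Finally, formula \eqref{eq:gdd-localized-vector-general} follows from linearity and continuity of $P P_+$. We have $PT_{q_\alpha}(z^{n}e_j)=PP_+(\chi(e^{it})e^{i(n+\alpha)t}e_j)$, and the map $\alpha\mapsto\chi e^{i(n+\alpha)t}e_j$ is entire into $L^2(\Torus;\C^m)$. Bounded operators commute with the Bochner integral in \eqref{eq:hermite-genocchi-general}, and shifting nodes by $n_p$ turns the divided difference of $\alpha\mapsto e^{i(n_p+\alpha)t}$ into $E[\lambda_{p,1},\ldots,\lambda_{p,q}](t)$.

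I expect the main obstacle to be the uniform estimate on divided differences of the Taylor monomials. The nodes may coincide, and the offsets can now exceed $1$, so the radius in Cauchy's estimate must be chosen larger than in \cref{lem:compact-localization-general}. The polynomial factor $\ell^{q-1}$ must also be absorbed into the geometric decay. Handling these two points correctly is what makes the series converge in operator norm.
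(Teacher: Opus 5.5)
Your proposal is correct and follows essentially the same route as the paper: you write $x_{p,q,j}-w_{p,q,j}$ as the divided difference of the compact-valued entire map $C_\alpha$ applied to $z^{n_p}e_j$, Taylor-expand, and sum the compact operators $C_\ell JM_\ell$ in norm, using the bounded multiplicity of $p\mapsto n_p$. The differences are cosmetic. The paper expands at the origin and bounds the divided differences of $\alpha^\ell$ as complete homogeneous symmetric polynomials with nodes in $[0,1+(M-1)\eta)$, whereas you expand about the midpoint of the cruder interval $[0,M+1]$ and bound directly via Hermite--Genocchi. You also merge all orders $q$ into one $J$ and one diagonal multiplier instead of summing over $q$.
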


\begin{proof}
Every close chain has at most $M$ elements and diameter less than
$(M-1)\eta$.  Hence
\[
   0\le\alpha_{p,q}<R_0:=1+(M-1)\eta.
\]
The first points of distinct maximal chains are separated by at least $\eta$;
therefore only a bounded number of them can have the same integer part $n_p$.

Recall the compact-valued entire function
\[
   C_\alpha=A^\alpha P-PT_{q_\alpha}
\]
from \cref{lem:compact-localization-general}.  Translation invariance of divided
differences and $A^{n_p}g_j=Pz^{n_p}e_j$ give
\begin{equation}\label{eq:gdd-error-column-general}
 x_{p,q,j}-w_{p,q,j}
 = [\alpha_{p,1},\ldots,\alpha_{p,q}]C_\alpha
      (z^{n_p}e_j).
\end{equation}
Expand $C_\alpha$ at the origin,
\[
   C_\alpha=\sum_{\ell=0}^\infty C_\ell\alpha^\ell,
\]
where every $C_\ell$ is compact.  For $\ell\ge q-1$, the divided difference of
the monomial is the complete homogeneous symmetric polynomial
\[
 [\alpha_1,\ldots,\alpha_q]\alpha^\ell
 =h_{\ell-q+1}(\alpha_1,\ldots,\alpha_q),
\]
and it is zero for $\ell<q-1$.  If $0\le\alpha_\nu\le R_0$, then
\begin{equation}\label{eq:h-bound-general}
 |h_s(\alpha_1,\ldots,\alpha_q)|
 \le \binom{s+q-1}{q-1}R_0^s.
\end{equation}
Choose $R_1>R_0$.  The Banach-valued Cauchy estimate on $|\alpha|=R_1$ gives
$\|C_\ell\|\le M_{R_1}R_1^{-\ell}$.  Consequently, for each
$q\le M$,
\begin{equation}\label{eq:gdd-norm-sum-general}
 \sum_{\ell=q-1}^\infty
 \|C_\ell\|
 \sup_p
 |h_{\ell-q+1}(\alpha_{p,1},\ldots,\alpha_{p,q})|<\infty.
\end{equation}

For fixed $q$, let $J_q$ map the coordinate indexed by $(p,j)$ to
$z^{n_p}e_j$.  The bounded multiplicity of $p\mapsto n_p$ makes $J_q$ bounded.
Let $M_{\ell,q}$ be the diagonal multiplier with entries
$h_{\ell-q+1}(\alpha_{p,1},\ldots,\alpha_{p,q})$.  By
\eqref{eq:gdd-error-column-general}, the synthesis error for the order $q-1$
GDDs is
\[
   \sum_{\ell=q-1}^\infty C_\ell J_qM_{\ell,q}.
\]
Every summand is compact, and \eqref{eq:gdd-norm-sum-general} gives convergence
in operator norm.  Thus the error is compact.  Summing over
$q=1,\ldots,M$ preserves compactness.

Finally, divided differences commute with the bounded linear Toeplitz map and
\[
 q_\alpha(e^{it})z^{n_p}
 =\chi(e^{it})e^{i(n_p+\alpha)t}.
\]
Taking the divided difference in $\alpha$ proves
\eqref{eq:gdd-localized-vector-general}.
\end{proof}

\begin{corollary}
\label{cor:gdd-fredholm-transfer-general}
In the setting of \cref{lem:gdd-compact-localization-general}, let
$\mathcal Y_d$ and $V_d=XR_d^*$ be as in
\cref{thm:fredholm-transfer-general}.  Define the projected exponential GDD
family
\[
  \mathcal E_d^{\rm GDD}(\Lambda)
  =\left\{
    P_{\mathcal Y_d}\bigl(
      E[\lambda_{p,1},\ldots,\lambda_{p,q}](t)e_j
    \bigr):p,\ 1\le q\le m_p,\ 1\le j\le m
  \right\}.
\]
If $D_{T,\Lambda}^{\rm GDD}$ and
$D_{\mathcal E_d^{\rm GDD}(\Lambda)}$ denote the synthesis operators of the
dynamical and projected exponential GDD families, respectively, then
\begin{equation}\label{eq:gdd-fredholm-transfer-general}
  D_{T,\Lambda}^{\rm GDD}
  =V_dD_{\mathcal E_d^{\rm GDD}(\Lambda)}+K_{\Lambda}^{\rm GDD}
\end{equation}
for a compact operator $K_{\Lambda}^{\rm GDD}$.  Hence upper and lower
semi-Fredholm properties pass between the two systems, and their Fredholm
indices agree whenever either synthesis operator is Fredholm.
\end{corollary}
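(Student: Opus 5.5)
The argument follows the proof of \cref{thm:fredholm-transfer-general}, with \cref{lem:gdd-compact-localization-general} replacing \cref{lem:compact-localization-general}. First, boundedness. By \cref{lem:exponential-gdd-frame-general}, or more directly by its Bessel half, the unprojected exponential GDD family $\{E[\lambda_{p,1},\ldots,\lambda_{p,q}](t)e_j\}$ is Bessel on $L^2(-d,d;\C^m)$. The Bessel step uses only the Hermite--Genocchi representation, the uniform separation of the convex hulls of distinct close chains, and the Ingham upper estimate. Orthogonal projection onto $\mathcal Y_d$ preserves Besselness, so $D_{\mathcal E_d^{\rm GDD}(\Lambda)}$ is bounded.

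Next, identify the projected family with the localized columns. For $u_{p,q,j}=P_{\mathcal Y_d}(E[\lambda_{p,1},\ldots,\lambda_{p,q}]e_j)$ we claim $R_d^*u_{p,q,j}=w_{p,q,j}$. To check this, test against $h\in\cN$:
\[
\ip{h}{R_d^*u_{p,q,j}}=\ip{\chi h}{E[\lambda_{p,1},\ldots,\lambda_{p,q}]e_j}_{L^2(-d,d)}.
\]
Since $\chi$ is real, this equals $\ip{h}{\chi E[\cdots]e_j}_{L^2(\Torus;\C^m)}$, with the integrand supported in $|t|<d$. Because $h\in H^2$ and $h=Ph$, this is $\ip h{PP_+(\chi E[\cdots]e_j)}$, which equals $\ip h{w_{p,q,j}}$ by \eqref{eq:gdd-localized-vector-general}. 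One point needs care: the function $E[\cdots](t)$ on $(-d,d)$ must be read as a function on the circle through angular coordinates, which is legitimate since $d<\pi$. With this identification, the synthesis operator of $(w_{p,q,j})$ is $R_d^*D_{\mathcal E_d^{\rm GDD}(\Lambda)}$.

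By \cref{lem:gdd-compact-localization-general}, the synthesis operator of $(x_{p,q,j})$ differs from $R_d^*D_{\mathcal E_d^{\rm GDD}(\Lambda)}$ by a compact operator. This also shows that it is bounded. Next, apply $X$. Since $X$ is bounded and linear, it commutes with divided differences; this can be checked via the Bochner-integral form \eqref{eq:hermite-genocchi-general}. Combined with $T^\zeta X=XA^\zeta$, holomorphic calculus extends \eqref{eq:fractional-similarity} to complex $\zeta$, and together with $Xg_j=f_j$ this gives $Xx_{p,q,j}=[\lambda_{p,1},\ldots,\lambda_{p,q}](\zeta\mapsto T^\zeta f_j)$. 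Hence
\[
D_{T,\Lambda}^{\rm GDD}=XR_d^*D_{\mathcal E_d^{\rm GDD}(\Lambda)}+XK',
\]
with $K'$ compact, which is \eqref{eq:gdd-fredholm-transfer-general} with $V_d=XR_d^*$. By \cref{thm:fredholm-transfer-general}, $V_d$ is invertible.

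Finally, semi-Fredholmness and the Fredholm index are invariant under left multiplication by an invertible operator and under compact perturbation, exactly as before; this gives the last sentence of the statement. The main obstacle is the identification of $R_d^*u_{p,q,j}$ with $w_{p,q,j}$. It requires pairing the arc space $L^2(-d,d)$ with the circle and checking that taking divided differences in $\alpha$ commutes with $PT_{\cdot}$ and with the inner product. The latter is justified by linearity and continuity of these maps applied to the entire $C(\Torus)$-valued family $\alpha\mapsto q_\alpha$.
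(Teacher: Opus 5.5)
Your proposal is correct and follows essentially the same route as the paper: Besselness from the upper (Ingham plus Hermite--Genocchi) half of \cref{lem:exponential-gdd-frame-general}, the identification $w_{p,q,j}=R_d^*P_{\mathcal Y_d}\bigl(E[\lambda_{p,1},\ldots,\lambda_{p,q}]e_j\bigr)$ via \eqref{eq:gdd-localized-vector-general}, the compact comparison from \cref{lem:gdd-compact-localization-general}, and then multiplication by $X$. The extra details you supply are exactly the steps the paper leaves implicit, namely the testing computation against $h\in\cN$ and the commutation of $X$ with divided differences through $T^\zeta X=XA^\zeta$, and they are sound.
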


\begin{proof}
The upper estimate in the proof of
\cref{lem:exponential-gdd-frame-general} uses only the uniform cluster
bound and separation of their convex hulls.  Thus the projected
exponential GDD family is Bessel, without a density assumption.
By \eqref{eq:gdd-localized-vector-general}, the localized GDD column
$w_{p,q,j}$ equals
\[
  R_d^*P_{\mathcal Y_d}
  \bigl(E[\lambda_{p,1},\ldots,\lambda_{p,q}](t)e_j\bigr).
\]
The synthesis difference between these columns and the model dynamical GDD
columns is compact by \cref{lem:gdd-compact-localization-general}.  Multiplying
by the similarity $X$ gives \eqref{eq:gdd-fredholm-transfer-general}; the
Fredholm assertions follow exactly as in
\cref{thm:fredholm-transfer-general}.
\end{proof}

The compact comparison and the multiplicity-counted completeness
criterion give the following frame theorem for clustered sampling.

\begin{theorem}
\label{thm:gdd-sectorial-general}
Assume that \eqref{eq:intro-frame} is a frame, $T$ is invertible, and
\eqref{eq:intro-sector} and \eqref{eq:essential-sector-general} hold.
Let $\Lambda=(\lambda_k)_{k\ge0}\subset[0,\infty)$ be a nondecreasing
sequence satisfying a weak $M$-gap condition, and count multiplicities
in its densities.  If
\begin{equation}\label{eq:gdd-essential-density-general}
  D_+^-(\Lambda)>c_{\mathrm e}/\pi,
\end{equation}
then the dynamical GDD family \eqref{eq:dynamical-gdd-family-general}
is Bessel and its synthesis operator has closed range of finite
codimension.  If also
\begin{equation}\label{eq:gdd-log-frame-density-general}
  L_{\rm mult}(\Lambda)>c/\pi,
\end{equation}
then it is a frame.  In particular, the single condition
\begin{equation}\label{eq:gdd-sectorial-density-general}
  D_+^-(\Lambda)>c/\pi
\end{equation}
suffices.
\end{theorem}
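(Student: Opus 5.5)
The plan mirrors the proof of \cref{thm:separated-general}, with \cref{cor:gdd-fredholm-transfer-general} replacing \cref{thm:fredholm-transfer-general} and \cref{lem:exponential-gdd-frame-general} replacing Beurling's theorem.

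\emph{Setting up the localization.} Fix an $\eta$-close-chain decomposition with $0<\eta<\min\{1,\gamma\}$. Then \cref{lem:gdd-compact-localization-general} and \cref{cor:gdd-fredholm-transfer-general} apply. They show that the dynamical GDD synthesis operator is bounded and satisfies
\[
D_{T,\Lambda}^{\rm GDD}=V_dD_{\mathcal E_d^{\rm GDD}(\Lambda)}+K_\Lambda^{\rm GDD}.
\]
Choose
\[
c_{\mathrm e}<d<\pi\min\{1,D_+^-(\Lambda)\}.
\]
It therefore suffices to show that $D_{\mathcal E_d^{\rm GDD}(\Lambda)}$ is lower semi-Fredholm.

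\emph{Augmentation and the exponential frame.} Form $\Omega=\Lambda\cup\{-1,-2,\ldots\}$. This multisequence is still relatively uniformly discrete: it is a union of at most $M+1$ separated subsequences. The multiset analogue of \cref{lem:augmented-density-general} has the same proof with multiplicities counted, and gives
\[
D^-(\Omega)=\min\{1,D_+^-(\Lambda)\}>d/\pi=|(-d,d)|/(2\pi).
\]
By \cref{lem:exponential-gdd-frame-general}, the standard GDD family of $\Omega$, tensored with the $e_j$, is a frame for $L^2(-d,d;\C^m)$.

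One point needs care here: the close chains of $\Omega$ should coincide with those of $\Lambda$ together with the singletons $\{-q\}$. Since $\eta<1$, a chain of $\Lambda$ starting near $0$ is not joined to $-1$ unless $\lambda_0<\eta-1<0$, which cannot happen. Hence the GDD family of $\Omega$ is exactly the family of $\Lambda$ together with the pure exponentials $e^{-iqt}e_j$. Projecting onto $\mathcal Y_d$ gives a frame for $\mathcal Y_d$, namely the union of $\mathcal E_d^{\rm GDD}(\Lambda)$ and $\mathcal E_d^-$.

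\emph{Discarding the negative frequencies.} The analysis operator of $\mathcal E_d^-$ is compact, by the same argument as in \cref{thm:separated-general}: it factors through $h\mapsto(\chi-1)h$, which is compact by \cref{lem:compact-restriction-general}. Therefore $[\,D_{\mathcal E_d^{\rm GDD}}\ \ 0\,]$ is a compact perturbation of an onto operator. So $D_{\mathcal E_d^{\rm GDD}}$ has closed range of finite codimension, and the compact transfer gives the first assertion.

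\emph{Completeness and the single condition.} If \eqref{eq:gdd-log-frame-density-general} holds, \cref{thm:gdd-completeness-general} gives completeness. Its hypotheses hold because the integer multiorbit is a frame, hence complete, and because close chains keep repeated nodes together. A dense closed range is everything, so the family is a frame. The single condition \eqref{eq:gdd-sectorial-density-general} implies both required inequalities, by $c_{\mathrm e}\le c$ and the multiset version of \cref{lem:uniform-log-general}.

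\emph{Main obstacle.} The delicate step is applying \cref{lem:exponential-gdd-frame-general} to the augmented multisequence. This requires checking both that its chain decomposition splits as described and that \cite{AM01} applies with multiplicities. If needed, one can shift to a half-line and use the translation argument already present in that lemma's proof.
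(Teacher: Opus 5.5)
Your outline follows the paper's proof step by step: localization via \cref{cor:gdd-fredholm-transfer-general}, augmentation by the negative integers, \cref{lem:exponential-gdd-frame-general} in place of Beurling's theorem, compactness of the negative-frequency analysis operator, and completeness from \cref{thm:gdd-completeness-general}. The gap is in the step you flag yourself but do not carry out. \cref{lem:exponential-gdd-frame-general} rests on \cite[Theorem~3(i)]{AM01}, whose GDD family is built from the connected components of discs of a radius admissible in \cite[Lemma~1]{AM01}. That radius must lie below $\delta/(2N)$ for some splitting of $\Omega$ into $N$ subsequences that are each $\delta$-separated. Your $\eta$ was fixed relative to the $\gamma$ of $\Lambda$. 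Your splitting of $\Omega$ into $M+1$ separated pieces only gives $\delta=\min\{M\gamma,1\}$ and $N=M+1$, hence an admissible radius $\min\{M\gamma,1\}/(2(M+1))$. This can be smaller than $\eta/2$; for instance, take $M=1$, $\gamma=1$, $\eta=0.9$. So that splitting does not certify that the $\eta$-close chains of $\Omega$ are the Avdonin--Moran components. Checking that the negative integers remain singleton chains is necessary but does not settle this. Shifting to a half-line does not address it either.

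The missing estimate is short; it is exactly what the paper inserts. Put $b=\max\{1,1/\gamma\}$. Every interval of length $H$ contains at most $bH+M+1$ points of $\Omega$, counted with multiplicity, so a nondecreasing enumeration satisfies $\mu_{k+N}-\mu_k\ge (N-M)/b$. Since $\eta<\min\{1,\gamma\}$ gives $b\eta<1$, you can choose $N$ with $(N-M)/(bN)>\eta$. The $N$ residue classes are then $\delta$-separated with $\delta=(N-M)/b$, and $\eta/2<\delta/(2N)$. Hence the disc radius $\eta/2$ is admissible in \cite[Lemma~1]{AM01}, and its components are your chains. With this inserted, the rest of your argument goes through as written.
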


\begin{proof}
Fix an admissible $\eta$-close-chain decomposition with
$0<\eta<\min\{1,\gamma\}$ and adjoin the negative integers as singleton
clusters:
\[
   \widetilde\Lambda=\Lambda\cup\{-1,-2,-3,\ldots\}.
\]
The weak-gap condition makes $\widetilde\Lambda$ relatively uniformly
discrete.  The same threshold $\eta$ is admissible in the classical
close-chain construction: put $b=\max\{1,1/\gamma\}$.  Every interval
of length $H$ contains at most $bH+M+1$ augmented nodes.  Hence,
for a nondecreasing two-sided enumeration $(\mu_k)$ of the augmented
sequence,
\[
  \mu_{k+N}-\mu_k\ge (N-M)/b.
\]
Since $b\eta<1$, choose $N$ large enough that
$(N-M)/(bN)>\eta$.  Its $N$ residue classes have separation at least
$(N-M)/b$.  Put $\delta=(N-M)/b$; then disc radius $\eta/2$ is below
the radius $\delta/(2N)$ in \cite[Lemma~1]{AM01}.  Negative integers remain
singleton chains because $\eta<1$.
The multiplicity-counted form of
\cref{lem:augmented-density-general} gives
\[
   D^-(\widetilde\Lambda)=\min\{1,D_+^-(\Lambda)\}.
\]
Choose
\[
   c_{\mathrm e}<d<\pi\min\{1,D_+^-(\Lambda)\}.
\]
By \cref{lem:exponential-gdd-frame-general}, the vector-valued exponential GDD
family associated with $\widetilde\Lambda$ is a frame for
$L^2(-d,d;\mathbb C^m)$, and its orthogonal projection is therefore a frame
for $\mathcal Y_d$.

Split the projected family into its positive-cluster part and the auxiliary
negative singleton exponentials.  The latter have compact analysis operator
exactly as in the proof of
\cref{thm:separated-general}.  Hence, the synthesis operator of the positive
projected exponential GDD family is lower semi-Fredholm.  The confluent
transfer identity \eqref{eq:gdd-fredholm-transfer-general} now makes the
dynamical GDD synthesis operator lower semi-Fredholm as well.

This proves the closed-range assertion.  Under
\eqref{eq:gdd-log-frame-density-general},
\cref{thm:gdd-completeness-general} gives completeness.  The synthesis
range is then both dense and closed, hence equal to $\mathcal H$.
Finally, \eqref{eq:gdd-sectorial-density-general} implies both density
conditions by $c_{\mathrm e}\le c$ and the multiplicity-counted form
of \cref{lem:uniform-log-general}.
\end{proof}

\begin{remark}
\label{rem:gdd-versus-raw-general}
The divided differences in \cref{thm:gdd-sectorial-general} preserve
confluent directions as temporal nodes coalesce.  For two distinct nodes,
\[
 [\lambda,\mu](\zeta\mapsto T^\zeta f_j)
 =\frac{T^\lambda-T^\mu}{\lambda-\mu}f_j
 \longrightarrow T^\lambda(\Log T)f_j
 \quad(\mu\to\lambda).
\]
Thus GDDs retain the missing confluent direction when raw samples coalesce.  The
pairing obstruction in \cref{prop:pairing-obstruction-general} shows that no
uniform theorem can recover this direction from the unscaled raw samples alone.
For clusters with distinct nodes, the GDDs and raw orbit vectors span the same
finite-dimensional cluster subspace; at an exact collision the natural limiting
object is instead the confluent subspace generated by the logarithmic
derivatives in \eqref{eq:dynamical-gdd-collision-general}.
\end{remark}

\begin{corollary}
\label{cor:gdd-cell-jitter-general}
Let $\lambda_k=rk+j_k$ with $0\le j_k<r$.  If
\begin{equation}\label{eq:gdd-cell-full-range-general}
   rc<\pi,
\end{equation}
then the dynamical GDD family obtained from the close pairs of the sequence
$(\lambda_k)$ is a frame.  Every close chain has size at most two, so the only
nontrivial replacements are
\begin{equation}\label{eq:gdd-pair-replacement-general}
   T^\lambda f_j,
   \qquad
   \frac{T^\lambda-T^\mu}{\lambda-\mu}f_j,
\end{equation}
with the second vector interpreted as $T^\lambda(\Log T)f_j$ in the
confluent limiting case (or when coincident endpoints are admitted).
\end{corollary}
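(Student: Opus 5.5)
The plan is to deduce the corollary from \cref{thm:gdd-sectorial-general}, applied with the multiplicity-counted sequence $\Lambda=(\lambda_k)_{k\ge0}$. Two hypotheses need checking: a weak $M$-gap condition, and the density inequality $D_+^-(\Lambda)>c/\pi$, where $c_{\mathrm e}\le c$ gives the essential condition automatically. After that, I will describe the shape of the close chains so that the replacements in \eqref{eq:gdd-pair-replacement-general} are exactly the GDD vectors of \eqref{eq:dynamical-gdd-family-general}.

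\emph{Weak gap.} Since $\lambda_k\in[rk,rk+r)$, the sequence is nondecreasing. For every $k$,
\[
\lambda_{k+2}-\lambda_k> r(k+2)-(rk+r)=r .
\]
So the weak $2$-gap condition \eqref{eq:weak-gap-general} holds with $M=2$ and any $\gamma<r/2$; I take $\gamma=r/4$ to keep the inequality safely strict and uniform. By the remark after \cref{def:weak-gap-general}, every $\eta$-close chain with $\eta<\gamma$ then has at most two elements. This also follows directly: a chain of three points would have diameter below $2\eta<r$, contradicting $\lambda_{k+2}-\lambda_k>r$. A chain is therefore either a singleton, which gives $T^{\lambda}f_j$, or a pair $\lambda<\mu$, which gives $T^\lambda f_j$ and the divided difference \eqref{eq:gdd-distinct-general}, namely $(T^\lambda-T^\mu)f_j/(\lambda-\mu)$. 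If an exact coincidence $\lambda_k=\lambda_{k+1}$ is admitted as an endpoint case, the pair gives $T^\lambda(\Log T)f_j$ by \eqref{eq:dynamical-gdd-collision-general}.

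\emph{Density.} Any interval $[x,x+R]$ contains every cell $[rk,rk+r)$ lying inside it, and each cell contains exactly one point of $\Lambda$. Hence $\#(\Lambda\cap[x,x+R])\ge R/r-2$ uniformly in $x\ge0$, so $D_+^-(\Lambda)\ge 1/r$. The reverse bound is not needed. Thus $rc<\pi$ gives $D_+^-(\Lambda)>c/\pi\ge c_{\mathrm e}/\pi$, which is \eqref{eq:gdd-sectorial-density-general}, and \cref{thm:gdd-sectorial-general} yields that the GDD family is a frame.

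The only delicate point I anticipate is bookkeeping in the choice of $\eta$. Independence of the decomposition is not an issue, because \cref{thm:gdd-sectorial-general} works for any admissible $\eta<\min\{1,\gamma\}$ that avoids the interpoint gaps; I will fix such an $\eta$, and "close pairs" in the statement refers to this decomposition. I will also note that ordering within a pair is the temporal convention. The frame property is unaffected if the two-element blocks are reordered, since the coefficient change is uniformly bounded and invertible, as remarked after \eqref{eq:dynamical-gdd-family-general}.
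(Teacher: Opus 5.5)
Your proposal is correct and follows essentially the same route as the paper's proof. The paper also derives the weak $2$-gap condition from $\lambda_{k+2}-\lambda_k>r$, uses one point per $r$-cell to get density $1/r>c/\pi$, applies \cref{thm:gdd-sectorial-general}, and notes that a clustering threshold below $r/2$ rules out chains of three points.
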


\begin{proof}
The sequence is increasing and
\(
   \lambda_{k+2}-\lambda_k>r.
\)
Hence, it satisfies a weak $2$-gap condition, for example with any
$\gamma<r/2$.  Its lower uniform density, counting the one point in every
$r$-cell, equals $1/r$.  Condition \eqref{eq:gdd-cell-full-range-general} is
exactly $1/r>c/\pi$.  Apply \cref{thm:gdd-sectorial-general}.  If the clustering
threshold is chosen below $r/2$, no close chain can contain three points.
\end{proof}

The pair-collision mechanism underlying this corollary is illustrated in
\cref{fig:gdd-collision-general}; higher-order collisions are treated by the
same Hermite--Genocchi construction.

\begin{figure}[H]
\centering
\begin{tikzpicture}[x=.83cm,y=.72cm,>=Latex]
  \draw[->] (0,2.65)--(11.6,2.65) node[right] {$t$};
  \draw[->] (0,.55)--(11.6,.55) node[right] {$t$};
  \node[anchor=east] at (-.25,2.65) {raw nodes};
  \node[anchor=east] at (-.25,.55) {GDD data};
  \foreach \x in {1.0,4.0,7.0,10.0}{
    \fill (\x-.18,2.65) circle (1.65pt);
    \fill (\x+.18,2.65) circle (1.65pt);
    \draw[<->,thin] (\x-.18,2.98)--(\x+.18,2.98);
    \fill (\x,.55) circle (1.65pt);
    \draw[thick] (\x,.55)--(\x,1.15);
    \node[scale=.67,anchor=west] at (\x+.07,.95) {$\partial_\lambda$};
  }
  \node[scale=.76,anchor=west] at (10.6,2.95) {$\varepsilon\downarrow0$};
  \node[scale=.76,anchor=west] at (9.2,-.05)
    {$T^\lambda f,\;T^\lambda(\Log T)f$};
\end{tikzpicture}
\caption{GDD preconditioning through a pair collision.  Two nearly coincident
raw temporal samples are replaced by a value and a divided difference; at exact
collision the second datum becomes a logarithmic derivative.  Higher-order
clusters lead analogously to higher powers of $\Log T$.}
\label{fig:gdd-collision-general}
\end{figure}

\subsection{Raw one-point-per-cell sampling}
For raw one-point-per-cell samples, the closed-range condition depends
on the essential spectral bound $c_{\mathrm e}$, while completeness is
controlled by the full spectral bound $c$.
\begin{theorem}
\label{thm:cell-jitter-general}
Under the standing frame hypotheses, let $r>0$ satisfy
\begin{equation}\label{eq:cell-jitter-threshold-general}
   rc<\pi,\qquad 2rc_{\mathrm e}<\pi.
\end{equation}
For every choice
\begin{equation}\label{eq:cell-jitter-times-general}
   0\le j_k<r,\qquad k\in\Nzero,
\end{equation}
the raw sampled family
\begin{equation}\label{eq:cell-jitter-family-general}
   \{T^{rk+j_k}f_j:k\in\Nzero,\ 1\le j\le m\}
\end{equation}
is a frame.  In particular, $rc<\pi/2$ is sufficient without any
restriction on the essential spectrum beyond $c_{\mathrm e}\le c$.
\end{theorem}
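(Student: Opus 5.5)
The plan is to split the raw family into two separated subfamilies, get closed range from the essential-spectrum condition applied to one of them, and get completeness from the full spectral bound. Write $\lambda_k=rk+j_k$ with $0\le j_k<r$. The sequence is strictly increasing. It need not be uniformly separated, since $\lambda_{k+1}-\lambda_k=r+j_{k+1}-j_k$ can be arbitrarily small. It does satisfy
\[
\lambda_{k+2}-\lambda_k=2r+j_{k+2}-j_k>r .
\]
I will therefore split $\Lambda=\{\lambda_k\}$ into the even and odd residue classes
\[
\Lambda_0=\{\lambda_{2n}\},\qquad \Lambda_1=\{\lambda_{2n+1}\}.
\]
Each class is uniformly separated with gap greater than $r$. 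Each has exactly one point in every cell $[2rn,2r(n+1))$, up to a shift by $r$, so $D_+^-(\Lambda_\ell)=1/(2r)$.

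First step (closed range). The hypothesis $2rc_{\mathrm e}<\pi$ is exactly $D_+^-(\Lambda_0)=1/(2r)>c_{\mathrm e}/\pi$. The first assertion of \cref{thm:separated-general} then shows that $\{T^\lambda f_j:\lambda\in\Lambda_0,\ 1\le j\le m\}$ is Bessel and that its synthesis operator $D_0$ has closed range of finite codimension; the same applies to $\Lambda_1$ and $D_1$. The full synthesis operator is the row $[\,D_0\ \ D_1\,]$, which is bounded as a finite union of Bessel families. Its range $\Ran D_0+\Ran D_1$ contains the closed, finite-codimensional subspace $\Ran D_0$. Any subspace containing a closed subspace of finite codimension is itself closed, since its image in the finite-dimensional quotient is automatically closed. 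So the raw synthesis operator is lower semi-Fredholm.

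Second step (completeness). The set $\Lambda$ consists of distinct points, one per $r$-cell, so $D_+^-(\Lambda)=1/r$. By \cref{lem:uniform-log-general}, $L(\Lambda)\ge 1/r>c/\pi$ because $rc<\pi$. \Cref{thm:sectorial-completeness-general} then makes the raw family complete; a possible point at $0$ is harmless. A dense range that is closed equals $\mathcal H$, so the family is a frame. For the final sentence, $rc<\pi/2$ together with $c_{\mathrm e}\le c$ gives both $rc<\pi$ and $2rc_{\mathrm e}<\pi$.

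The main obstacle is the absence of uniform separation, which blocks a direct application of \cref{thm:separated-general}. The residue-class extraction is the key device, and the closed-range step must be run on the union of two operators rather than on the separated pieces alone. One should check the sum-of-ranges argument carefully. It uses only that one summand is already closed of finite codimension, and needs no control of the (possibly nearly colliding) interaction between the two classes.
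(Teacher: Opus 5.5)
Your proposal is correct and follows essentially the same route as the paper. You take closed range of finite codimension from the even residue class via \cref{thm:separated-general} (using $2rc_{\mathrm e}<\pi$), pass it to the full range by the finite-dimensional quotient argument, and get completeness from $L(\Lambda)\ge 1/r>c/\pi$ and \cref{thm:sectorial-completeness-general}. The only cosmetic difference is boundedness of the full synthesis operator: you obtain it by applying \cref{thm:separated-general} to both residue classes, while the paper invokes bounded integer-part multiplicity through \cref{thm:fredholm-transfer-general}.
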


\begin{proof}
Write $\lambda_k=rk+j_k$ and $\Lambda=\{\lambda_k:k\ge0\}$.
There is one point in each half-open $r$-cell, so
\[
 D_+^-(\Lambda)=1/r,\qquad L(\Lambda)\ge1/r.
\]
Moreover, $\lambda_{k+2}-\lambda_k>r$.  Thus the even subsequence
$\Lambda_0=\{\lambda_{2k}:k\ge0\}$ is uniformly separated and
has $D_+^-(\Lambda_0)=1/(2r)>c_{\mathrm e}/\pi$.
By the closed-range assertion in \cref{thm:separated-general}, its
dynamical synthesis operator $D_0$ has closed range of finite codimension.

The full temporal sequence has bounded integer-part multiplicity,
so its synthesis operator $D$ is bounded by
\cref{thm:fredholm-transfer-general}.  Since
$\Ran D_0\subset\Ran D$, the latter range is closed and of finite
codimension as well: its image in the finite-dimensional quotient
$\mathcal H/\Ran D_0$ is a closed subspace.
Finally, $L(\Lambda)\ge1/r>c/\pi$ and
\cref{thm:sectorial-completeness-general} give completeness.
Hence $D$ is onto.
\end{proof}

\begin{remark}
\label{rem:ffc-comparison-general}
If $\sigma_{\mathrm e}(T)\subseteq\{1\}$, take $c_{\mathrm e}=0$.
Then \cref{thm:cell-jitter-general} gives raw one-point-per-cell
sampling throughout $rc<\pi$, without normality, diagonalizability,
or a Stolz-domain hypothesis.  This includes the one-point-per-cell
conclusion of \cite[Theorem~1.3]{KMMFFC26}: the separated spectrum
there lies in a Stolz domain accumulating only at $1$, and its
nonzero spectral points are bounded away from zero.

For a general essential spectrum, the second restriction
$2rc_{\mathrm e}<\pi$ has a different role from $rc<\pi$.
The former supplies closed range through a separated subsequence;
the latter supplies completeness of the full family.
The pairing obstruction in \cref{prop:pairing-obstruction-general}
occurs when $c_{\mathrm e}=c=\pi/2$, and shows that the universal raw
range remains $rc<\pi/2$.  GDD preconditioning recovers $rc<\pi$
without requiring a smaller essential spectral arc.
\end{remark}

\section{Sharpness and exact collisions}
\label{sec:sharpness}

The standing frame hypotheses remain in force.  The following examples
establish sharpness of the sampling thresholds and illustrate exact
collisions in the GDD theorem.

\begin{example}
\label{ex:density-sharpness-general}
Let $0<c<\pi$, $0<\rho<1$, and
\[
   T=\rho\begin{pmatrix}e^{ic}&0\\0&e^{-ic}\end{pmatrix},
   \qquad f=\begin{pmatrix}1\\1\end{pmatrix}
\]
on $\C^2$.  The orbit $(T^nf)_{n\ge0}$ is a frame: it is Bessel because of
the factor $\rho^n$, and $f,Tf$ span $\C^2$.  At the endpoint
$r=\pi/c$,
\[
   T^{rk}f
   =(-\rho^r)^k\begin{pmatrix}1\\1\end{pmatrix},
\]
so the progression spans only one dimension.  Hence the strict inequalities
$D_+^-(\Lambda)>c/\pi$ and $rc<\pi$ cannot be weakened uniformly.

The same example also tests the complement-property constant.  Set
$r=\pi/(2c)$.  Then
\[
 T^{rk}f=\rho^{rk}\begin{pmatrix}i^k\\(-i)^k\end{pmatrix}.
\]
The even subfamily spans $\mathbb C(1,1)^T$ and the odd subfamily
spans $\mathbb C(1,-1)^T$, so the complement property fails at
$L(r\mathbb N_0)=2c/\pi$.  For a real example, take $T$ to be
$\rho$ times the planar rotation by $c$ and $f=(1,0)^T$.
At the same step $r$, the two parity subfamilies span the two real
coordinate axes.  Thus the doubled threshold is sharp also for
the real phase-retrieval conclusion.
\end{example}

\subsection{Pairing obstruction for raw cell jitter}

The following singular-inner-function model shows that the universal
range $rc<\pi/2$ in \cref{thm:cell-jitter-general} is sharp for raw
one-point-per-cell sampling.

\begin{proposition}
\label{prop:pairing-obstruction-general}
There are an invertible operator $T$, a vector $f$ whose integer orbit is a frame,
and a sequence
\[
   \lambda_k=k+j_k,
   \qquad 0\le j_k<1,
\]
such that
\[
   \sigma(T)=\{i,-i\}\subset\{e^{it}:|t|\le\pi/2\}
\]
and $(T^{\lambda_k}f)_{k\ge0}$ is not a frame.  Thus the universal range
$rc<\pi/2$ in \cref{thm:cell-jitter-general} cannot be replaced uniformly by
$rc<\pi$.
\end{proposition}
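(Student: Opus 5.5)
The plan is to build the example inside the scalar Hardy model of \cref{prop:basic-model}, with $m=1$. I would pick the model space so that it carries an exact parity symmetry. Then the odd-integer orbit has infinite codimension, and one-point-per-cell times that collapse in pairs onto the odd integers cannot produce a frame.

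\textbf{Construction.} Let $\mu=\delta_i+\delta_{-i}$ and let $\theta$ be the singular inner function with singular measure $\mu$. Put $\cN=H^2\ominus\theta H^2$, $A=PS|_{\cN}$ and $g=P1$. By \cref{prop:basic-model}, $\{A^ng\}_{n\ge0}$ is a Parseval frame for $\cN$. I would then check the spectrum. The classical Livšic--Moeller description gives $\sigma(A)=\operatorname{supp}\mu=\{i,-i\}$, and in particular $A$ is invertible. It follows that $\sigma(A)\subset\Sigma_{\pi/2}$ and, by \cref{lem:finite-defect}, $E_A=\{i,-i\}$, so $c=c_{\mathrm e}=\pi/2$. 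Take $T=A$ and $f=g$.

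\textbf{Symmetry.} Let $(Uh)(z)=h(-z)$ on $H^2$. Since $\mu$ is invariant under $z\mapsto -z$, we have $\theta(-z)=\theta(z)$, so $U$ preserves $\theta H^2$ and hence $\cN$. The even part $\cN_{\rm ev}=\{h\in\cN:Uh=h\}$ is infinite-dimensional. Indeed $\cN$ is infinite-dimensional, and $P_{\rm ev}(P\theta_1)$ for suitable test functions is nonzero; alternatively, $z\mapsto z^2$ multiplication arguments show $U$ cannot have finite-dimensional $+1$ eigenspace. For $h\in\cN_{\rm ev}$ and $k\ge0$,
\[
  \ip h{A^{2k+1}g}=\ip h{P z^{2k+1}}=\ip h{z^{2k+1}}=0 .
\]
So every odd power $A^{2k+1}g$ lies in the infinite-codimensional subspace $\cN\ominus\cN_{\rm ev}$.

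\textbf{Sampling sequence.} Choose $0<\varepsilon_k<1$ with $\sum_k\varepsilon_k^2<\infty$. Set $\lambda_{2k}=2k+1-\varepsilon_k$, which lies in the cell $[2k,2k+1)$, and $\lambda_{2k+1}=2k+1$, which lies in $[2k+1,2k+2)$. Thus $\lambda_k=k+j_k$ with $0\le j_k<1$. Put $x_k=A^{2k+1}g$ and $y_k=A^{2k+1-\varepsilon_k}g$. Then
\[
  \|x_k-y_k\|\le \varepsilon_k\sup_{s\ge0}\|A^{s}\Log A\|\,\|g\|.
\]
This supremum is finite, because $A^{s}=A^{\lfloor s\rfloor}A^{\{s\}}$, $A$ is a contraction, and $\alpha\mapsto A^\alpha\Log A$ is entire and hence bounded on $[0,1]$. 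So $(y_k-x_k)$ is square-summable in norm, and the synthesis operator of $(y_k-x_k)$ is Hilbert--Schmidt, hence compact. The synthesis operator of the full family is therefore
\[
  D=[\,D_x\ \ D_x\,]+K,\qquad K\ \text{compact}.
\]
The range of $[D_x\ D_x]$ is $\Ran D_x\subset\cN\ominus\cN_{\rm ev}$, which has infinite codimension, so this operator is not lower semi-Fredholm. Lower semi-Fredholmness is stable under compact perturbation, so $D$ is not lower semi-Fredholm either. In particular $D$ is not onto, and $(T^{\lambda_k}f)$ is not a frame. (It is Bessel, by \cref{thm:fredholm-transfer-general}, since the integer parts have multiplicity at most $2$.) Since $r=1$ and $c=\pi/2$ here, this gives $rc=\pi/2<\pi$, which shows that the range $rc<\pi/2$ in \cref{thm:cell-jitter-general} cannot be enlarged to $rc<\pi$ uniformly.

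\textbf{Main obstacle.} The delicate points are the spectral identification $\sigma(A)=\{i,-i\}$ and the infinite-dimensionality of $\cN_{\rm ev}$. For the spectrum I would cite the standard result for compressed shifts on model spaces of singular inner functions. For $\cN_{\rm ev}$ I would argue directly: $U$ is a unitary involution of $\cN$. If its $+1$ eigenspace were finite-dimensional, then $\cN\ominus\cN_{\rm ev}$ would be a finite-codimensional subspace of odd functions. It would then be invariant under $A^{*2}=S^{*2}|_{\cN}$ up to finite rank, and that is incompatible with $\sigma_{\mathrm e}(A^2)=\{-1\}$ together with the Parseval identity $\sum_n|\ip h{z^{2n}}|^2=\|h\|^2$ for even $h$. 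If this route proves awkward, the fallback is to exhibit explicit even elements such as $P(z^{2n}\overline{\theta}\,\theta)$-type reproducing-kernel combinations $k_w+k_{-w}$, with $k_w$ the reproducing kernel of $\cN$, for infinitely many distinct $w^2$.
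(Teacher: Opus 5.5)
Your construction is the paper's: the same symmetric singular inner function (your $\theta$ is the paper's $\vartheta(-z^2)$ with $a=2$), the same parity splitting of $\cN$ with the odd orbit vectors $A^{2k+1}g$ confined to the odd part, and one-point-per-cell times collapsing onto odd integers. The contradiction is likewise drawn from a Hilbert--Schmidt perturbation and the compact-perturbation argument against surjectivity. Your asymmetric nodes $\lambda_{2k}=2k+1-\varepsilon_k$, $\lambda_{2k+1}=2k+1$ and your bound via $\sup_{s\ge0}\|A^s\Log A\|$ differ only cosmetically from the paper's symmetric pairs $2q+1\pm\varepsilon_q$ chosen by norm continuity.

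The one step you have not actually proved is $\dim\cN_{\rm ev}=\infty$, and the whole contradiction rests on it.
\begin{itemize}
\item Your first justification shows at most $\cN_{\rm ev}\neq\{0\}$.
\item Your second is not an argument. The odd part is exactly, not approximately, $A^{*2}$-invariant, and no contradiction with $\sigma_{\mathrm e}(A^2)=\{-1\}$ is derived.
\item The kernel fallback is viable because $Uk_w=k_{-w}$. It still needs linear independence of the reproducing kernels of $\cN$ at distinct points, which you do not supply.
\end{itemize}

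The paper gets this from $K_\theta=W_0K_\vartheta\oplus W_1K_\vartheta$. Writing $h(z)=u_0(-z^2)+z\,u_1(-z^2)$ gives $H^2=W_0H^2\oplus W_1H^2$ and $\theta H^2=W_0(\vartheta H^2)\oplus W_1(\vartheta H^2)$. Hence $\cN_{\rm ev}=W_0K_\vartheta$ is an isometric copy of the infinite-dimensional $K_\vartheta$.

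Within your own framework there is also a short fix. $A^*=S^*|_{\cN}$ maps odd functions to even ones. Suppose $\dim\cN_{\rm ev}<\infty$, and write $A^*=A^*\tfrac12(I+U)+A^*\tfrac12(I-U)$. The first term has finite-dimensional domain and the second has range in $\cN_{\rm ev}$, so $A^*$ would have finite rank. Then $A$ would be compact, contradicting its invertibility on the infinite-dimensional $\cN$.

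Separately, \cref{lem:finite-defect} only gives $E_A\subseteq\{e^{it}:|t|\le\pi/2\}$, not $E_A=\{i,-i\}$. Equality follows from $UAU^{-1}=-A$, but the proposition does not need it.
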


\begin{proof}
Fix $a>0$ and let
\[
   \vartheta(z)=\exp\!\left(-a\frac{1+z}{1-z}\right),
   \qquad
   \theta(z)=\vartheta(-z^2)
   =\exp\!\left(-a\frac{1-z^2}{1+z^2}\right).
\]
The singular inner function $\theta$ has equal atoms at $i$ and $-i$.  Put $K_\theta=H^2\ominus\theta H^2$ and
$S_\theta=P_{K_\theta}M_z|_{K_\theta}$.  Since $\theta$ has no zeros in
$\mathbb D$ and its boundary spectrum is $\{i,-i\}$, the standard spectral
formula for model operators \cite{Nikolski02,Sarason94} gives
$\sigma(S_\theta)=\{i,-i\}$.  In particular, $S_\theta$ is invertible.
Take $T=S_\theta$ and $f=P_{K_\theta}1$.  Its integer orbit is the canonical
Parseval frame.
Define isometries
\[
   W_0h(z)=h(-z^2),
   \qquad W_1h(z)=zh(-z^2).
\]
Then
\begin{equation}\label{eq:pairing-decomposition-general}
   K_\theta=W_0K_\vartheta\oplus W_1K_\vartheta,
\end{equation}
and every odd canonical vector $T^{2q+1}f$ lies in the second summand.

By norm continuity of $s\mapsto T^sf$, choose
$0<\varepsilon_q<\min\{1/2,2^{-q}\}$ so that
\[
   \norm{T^{2q+1-\varepsilon_q}f-T^{2q+1}f}
   +
   \norm{T^{2q+1+\varepsilon_q}f-T^{2q+1}f}
   \le2^{-q-1}.
\]
Set
\[
   \lambda_{2q}=2q+1-\varepsilon_q,
   \qquad \lambda_{2q+1}=2q+1+\varepsilon_q.
\]
The resulting temporal pattern is depicted in
\cref{fig:pairing-general}.  There is one point in every unit cell.  The synthesis operator of
$(T^{\lambda_k}f)$ differs by a Hilbert--Schmidt operator from the synthesis
operator of the duplicated family
\[
   y_{2q}=y_{2q+1}=T^{2q+1}f.
\]
The latter has range in $W_1K_\vartheta$.  If the perturbed synthesis operator
were onto $K_\theta$, projection onto the infinite-dimensional summand
$W_0K_\vartheta$ would give a compact surjection, which is impossible.
\end{proof}

The same example shows why the separation hypothesis in Avdonin's
theorem is essential.

\begin{remark}
\label{rem:avdonin-pairing-general}
Relative to the progression $k$, the errors in the construction are
\[
   \delta_{2q}=1-\varepsilon_q,
   \qquad
   \delta_{2q+1}=\varepsilon_q.
\]
After subtracting the common offset $1/2$, the aligned two-term averages
are exactly zero, while the shifted two-term averages have absolute value
\[
   \frac12|\varepsilon_q-\varepsilon_{q+1}|<\frac14.
\]
Thus the strict $N=2$ Avdonin block-average condition holds.  Nevertheless,
\[
   \lambda_{2q+1}-\lambda_{2q}=2\varepsilon_q\longrightarrow0,
\]
so the frequencies are not uniformly separated.  Hence block-average control
cannot improve the arbitrary-cell threshold without a separation hypothesis.
\end{remark}

\begin{figure}[t]
\centering
\begin{tikzpicture}[x=.9cm,y=1cm,>=Latex]
  \draw[->] (-.2,0)--(9.1,0) node[right] {$\lambda$};
  \foreach \k in {0,...,8}{
    \draw (\k,0.08)--(\k,-0.08) node[below=3pt,scale=.75] {$\k$};
  }
  \foreach \q in {0,...,3}{
    \fill ({2*\q+1-.18},.30) circle (1.8pt);
    \fill ({2*\q+1+.18},.30) circle (1.8pt);
    \draw[<->,thin] ({2*\q+1-.18},.55)--({2*\q+1+.18},.55);
  }
  \foreach \k in {0,...,7}{
    \draw[gray!55] (\k,-.45) rectangle ({\k+1},.75);
  }
  \node[anchor=west,scale=.83] at (.2,1.02)
    {one point per unit cell; balanced pairs collapse near odd integers};
\end{tikzpicture}
\caption{The endpoint pairing pattern.  Each neighboring pair is balanced about an
odd integer but collapses to two copies of one canonical vector.  The system is a compact perturbation of a family contained in one
model-space summand.}
\label{fig:pairing-general}
\end{figure}

\subsection{Exact multiple collisions}

\Cref{thm:gdd-sectorial-general} yields functional-calculus frames
containing logarithmic derivatives of any fixed order.

\begin{example}
\label{ex:ffc-multiple-collision-general}
Fix $M\in\mathbb N$, $r>0$, and $s_0\ge0$, and assume
\begin{equation}\label{eq:ffc-multiple-density-general}
   rc<M\pi.
\end{equation}
At every temporal center $s_0+rk$, place a node of multiplicity $M$.  For
$1\le q\le M$, define
\begin{equation}\label{eq:ffc-confluent-functions-general}
   \varphi_{k,q}(z)
   =\frac{1}{(q-1)!}z^{s_0+rk}(\Log z)^{q-1}
\end{equation}
on the principal logarithm domain.  Then
\begin{equation}\label{eq:ffc-confluent-frame-general}
   \{\varphi_{k,q}(T)f_j:
      k\in\mathbb N_0,\ 1\le q\le M,\ 1\le j\le m\}
\end{equation}
is a frame for $\mathcal H$.
\end{example}

\begin{proof}
Let $\Lambda$ be the temporal multiset in which every $s_0+rk$ occurs
$M$ times.  It satisfies a weak $M$-gap condition and has
multiplicity-counted density $M/r$.  Condition
\eqref{eq:ffc-multiple-density-general} is exactly $M/r>c/\pi$, so
\cref{thm:gdd-sectorial-general} applies.  At a repeated node,
\eqref{eq:gdd-confluent-general} gives
\[
   [s_0+rk,\ldots,s_0+rk](\zeta\mapsto T^\zeta f_j)
   =\frac{1}{(q-1)!}T^{s_0+rk}(\Log T)^{q-1}f_j
   =\varphi_{k,q}(T)f_j,
\]
which proves \eqref{eq:ffc-confluent-frame-general}.
\end{proof}

\begin{remark}
For $M=2$ the confluent block consists of
$T^{s_0+rk}f_j$ and $T^{s_0+rk}(\Log T)f_j$.  In general one obtains
\[
   T^{s_0+rk}f_j,\quad T^{s_0+rk}(\Log T)f_j,\quad\ldots,\quad
   \frac{1}{(M-1)!}T^{s_0+rk}(\Log T)^{M-1}f_j.
\]
Thus exact temporal collisions yield Hermite-type functional-calculus
data.
\end{remark}

\section{Conclusion}

Compact Fredholm localization separates two sources of sampling
constraints.  The essential spectral arc controls the closed-range
estimate, whereas the full spectral arc controls completeness.
This distinction yields frames under
$D_+^-(\Lambda)>c_{\mathrm e}/\pi$ and $L(\Lambda)>c/\pi$,
with an analogous statement for multiplicity-counted GDD data.
The completeness and complement-property arguments require only
an invertible operator in a proper angular sector.

For raw one-point-per-cell sampling, the conditions are
$rc<\pi$ and $2rc_{\mathrm e}<\pi$.  They recover the full range
$rc<\pi$ when the essential spectrum is contained in $\{1\}$.
Uniformly over all admissible essential spectra, the range
$rc<\pi/2$ is sharp.
GDD data yield the full universal range $rc<\pi$ by retaining
confluent directions, including logarithmic derivatives at exact
collisions.  At doubled logarithmic density, the ordinary complement
property for raw samples reduces to the channel-cyclic condition;
the GDD analogue concerns partitions that keep each cluster block intact.

\section*{Acknowledgments}  The first author was supported in part by the Fulbright Global Scholar Award; he thanks the second author for his wonderful hospitality in Qu\'ebec. The second author acknowledges support from the Canada Research Chairs Program CRC-2022-00097 and
the NSERC Discovery Grant RGPIN-2024-04232.

\section*{Declaration of generative AI and AI-assisted technologies in the
manuscript preparation process}
During the preparation of this work, the authors used OpenAI's ChatGPT to
assist with organization, preliminary \LaTeX{} drafting and figures, algebraic consistency
checks, and bibliography formatting.  The authors reviewed and
edited all AI-assisted output, verified the mathematical arguments and
references, and take full responsibility for the content of the article.

\end{document}